\newtheorem{theorem}{Theorem}
\newtheorem{proposition}[theorem]{Proposition}
\newtheorem{conjecture}[theorem]{Conjecture}
\theoremstyle{definition}
\newtheorem{example}[theorem]{Example}
\definecolor{webgreen}{rgb}{0,.5,0}
\definecolor{webbrown}{rgb}{.6,0,0}
\newcommand{\seqnum}[1]{\href{http://oeis.org/#1}{\underline{#1}}}
\begin{document}

\begin{center}
\vskip 1cm{\LARGE\bf Some observations on the Rueppel sequence} \vskip 1cm \large
Paul Barry\\
School of Science\\
Waterford Institute of Technology\\
Ireland\\
\href{mailto:pbarry@wit.ie}{\tt pbarry@wit.ie}
\end{center}
\vskip .2 in

\begin{abstract} Starting with a definition based on the Catalan numbers, we carry out an empirical study of the Rueppel sequence. We use the Hankel transform as the main technique. By means of this transform we find links to such sequences as the Jacobi sequence and the paper-folding sequence.
\end{abstract}

\section{Introduction}
In this note we carry out a largely empirical study of a variation on a classical sequence (the Rueppel sequence) whose Hankel transform leads to another sequence of fundamental importance. In further exploring the family of ideas thrown up by this, we encounter other Hankel transforms that mirror other sequences regarded as basic. The mirroring arises due to the fact that these Hankel transforms are signed versions of more classical sequences. We end this note by conjecturing a form for this signing, again in terms of classical sequences.

The form of the Hankel transform of sequences $a_n$ that we use is to consider the sequence $h_n=|a_{i+j}|_{0 \le i,j \le n}$ as the Hankel transform of $a_n$. Thus this is a sequence of Hankel determinants. A careful analysis of the types of Hankel determinants encountered in this note has been carried out by a number of authors \cite{Bacher, Cigler, PD}.

The main building block of the paper is the Rueppel sequence $r_n$, whose elements can be defined by
$$r_n = C_n \bmod 2,$$
where $C_n=\frac{1}{n+1}\binom{2n}{n}$ is the $n$-th Catalan number. A consequence of this is that methods developed for moment sequences (such as the Hankel transform) are appropriate. Thus we shall see that some generating functions we will encounter can be described by continued fractions of Jacobi type \cite{Wall}.

If a sequence $a_n$ has a generating function $g(x)$ that can be expressed as a continued
$$g(x)=\cfrac{1}{1-\alpha_0 x-
\cfrac{\beta_1 x^2}{1-\alpha_1 x-
\cfrac{\beta_2 x^2}{1-\alpha_2 x-\cdots}}},$$ then the Hankel transform of $a_n$ is given by
$$h_n=\prod_{k=0}^n b_k^{n-k},$$  independent of the $\alpha_n$. This does mean that many sequences may have the same Hankel transform.

We make ample use of the treasure trove that resides in the On-Line Encyclopedia of Integer Sequences (OEIS) \cite{SL1, SL2}. Indeed, as will be appreciated by the reader, this paper owes a great deal of its existence to this resource. Sequences where known will be referred to by their OEIS number.

In parts of this paper, we will make use of the language of Riordan arrays \cite{book, SGWW}. The motivation for this comes from the analogy between the Rueppel numbers and the Catalan numbers, as Riordan arrays provide a powerful framework for dealing with important combinatorial triangles defined by Catalan numbers. A backdrop to this is the context of moment sequences and orthogonal polynomials, which we do not pursue here, but which has been touched on elsewhere \cite{Cigler}.

The OEIS sequence \seqnum{A007318} is Pascal's triangle, also known as the binomial matrix $B=\left(\binom{n}{k}\right)$. As a Riordan array, this is defined by
$$\left(\frac{1}{1-x}, \frac{x}{1-x}\right),$$ where
$$\binom{n}{k}=[x^n]\frac{1}{1-x} \left(\frac{x}{1-x}\right)^k.$$ Here, $[x^n]$ is the functional which extracts the coefficient of $x^n$ when applied to a  power series $\sum_{n=0}^{\infty} a_n x^n$. In general a Riordan array is defined by a pair of suitable power series $(g(x), f(x))$. The fundamental theorem of Riordan arrays is says that
$$(g(x), f(x))\cdot A(x)=g(x)f(A(x)).$$

The basic sequences that we shall encounter are often related to binary representations of natural numbers. Significant sequences bear such names as the paper-folding sequence, the Jacobi (symbol) sequence, the Thue-Morse sequence, the Golay-Rudin-Shapiro sequence, Gould's sequence, Stern's diatomic sequence, and of course the Rueppel sequence and the Catalan numbers.

The plan of this paper is as follows.

\begin{itemize}
\item This introduction
\item Preliminaries on basic sequences
\item Distribution of elements in $s_n$
\item The Rueppel sequence
\item Modifying the Rueppel sequence and Hankel transforms
\item The sequence $\tilde{s}_n$
\item Some further results
\item Related explorations
\item Some notable number triangles and related sequences
\item The sequence $\sigma_n$
\item Conclusions
\end{itemize}

\section{Preliminaries on basic sequences}
We begin with a review of some important sequences which will be useful in the sequel.
The Jacobi sequence $j_n$ is the sequence with generating function
$$ j(x)=\sum_{k=0}^{\infty} \frac{x^{2^k}}{1+x^{2^{k+1}}},$$ which begins
$$0,1, 1, -1, 1, 1, -1, -1, 1, 1, 1, -1, -1, 1, -1, -1, 1,\ldots.$$ We have $j_0=0$, and
$$j_n=\left(\frac{-1}{n}\right) \quad \text{for}\quad n >0.$$
This is \seqnum{A034947}. The partial sums of this sequence, $g_n=\sum_{k=0}^n j_k$, have generating function
 $$ \frac{1}{1-x} \sum_{k=0}^{\infty} \frac{x^{2^k}}{1+x^{2^{k+1}}}.$$
This sequence begins
$$0, 1, 2, 1, 2, 3, 2, 1, 2, 3, 4, 3, 2, 3,\ldots.$$ This is \seqnum{A005811}, and it counts the number of runs in the binary expansion of $n > 0$, or alternatively, the number of $1$'s in the Gray code for $n$.

Of importance for this note is the sequence $$s_n=1+\sum_{k=0}^n j_k$$ obtained by adding $1$ to each element of this sequence, to get the sequence that begins
$$1, 2, 3, 2, 3, 4, 3, 2, 3, 4, 5, 4, 3, 4, 3, 2, 3, 4, \ldots.$$ This is \seqnum{A088748}. The generating function for $s_n$ is given by
$$\frac{1}{1-x}+\frac{1}{1-x} \sum_{k=0}^{\infty} \frac{x^{2^k}}{1+x^{2^{k+1}}}=\frac{1}{1-x}\left(1+ \sum_{k=0}^{\infty} \frac{x^{2^k}}{1+x^{2^{k+1}}}\right),$$ and so we have
$$s_n=\sum_{k=0}^n 0^k+j_k.$$ That is, $s_n$ represents the partial sums of the Jacobi sequence when this sequence starts $1,1,1,-1,1,\ldots$.

The sequence with general element $$p_n=\frac{j_{n+1}+1}{2}$$ is the \emph{paper-folding sequence} which begins
$$1, 1, 0, 1, 1, 0, 0, 1, 1, 1, 0, 0, 1, 0, 0, 1, 1, 1, 0,\ldots.$$ This is \seqnum{A014577}. It has its generating function given by
$$\frac{1}{2}\left(\frac{1}{1-x}+\frac{1}{x}\sum_{k=0}^{\infty} \frac{x^{2^k}}{1+x^{2^{k+1}}}\right)=\sum_{k=0}^{\infty} \frac{x^{2^k-1}}{1-x^{2^{k+2}}}.$$
We have the following relation between $s_n$ and $p_n$.
$$\frac{1+s_n-s_{n+1}}{2}=1-p_n,$$ or equivalently
$$\frac{s_{n+1}-s_n+1}{2}=p_n.$$
The sequence $\bar{p}_n=1-p_n$ is \seqnum{A014707}, where
$$\bar{p}_n =\frac{1-\left(\frac{-1}{n+1}\right)}{2}$$ satisfies
$\bar{p}_{4n}=0, \bar{p}_{4n+2}=1$, and $\bar{p}_{2n+1}=\bar{p}_n$.

Apart from the above definition of the paper-folding sequence, there are many links between the sequences $p_n$ and $j_n$. If we consider the positions of $+1$ in the Jacobi sequence $j_n$, we find that they occur at the positions indexed by the sequence \seqnum{A091072}, beginning
$$1, 2, 4, 5, 8, 9, 10, 13, 16, 17, 18, 20, 21, 25, 26, 29,\ldots.$$  The characteristic function of this sequence is the paper-folding sequence. Thus if we set all terms $-1$ of the sequence $\left(\frac{-1}{n}\right)$ to zero, we obtain the paper-folding sequence (with a $1$ pre-pended). Regarding the positions of the $-1$'s in the Jacobi sequence, we find that they are indexed by the complementary sequence to \seqnum{A091072}, namely \seqnum{A091067}. This sequence is composed of numbers $a_n$ such that the odd-part of $a_n$ is of the form $4k+3$. It begins $$3, 6, 7, 11, 12, 14, 15, 19, 22, 23, 24,\ldots.$$ The characteristic sequence for this is the sequence \seqnum{A038189}. This latter sequence begins
$$0, 0, 0, 1, 0, 0, 1, 1, 0, 0, 0, 1, 1, 0, 1, 1, 0, 0, 0, 1,\ldots.$$ The sequence that begins
$$ 0, 0, 1, 0, 0, 1, 1, 0, 0, 0, 1, 1, 0, 1, 1, 0, 0, 0, 1,\ldots$$ is \seqnum{A014707}, which can be specified by $t_0=0, t_{4n}=0, t_{4n+2}=1$, and $t_{2n+1}=t_n$.

\section{Distribution of elements in $s_n$}
We recall that the sequence $s_n$ is given by
$$s_n=1+\sum_{k=0}^n j_k$$ and begins
$$1, 2, 3, 2, 3, 4, 3, 2, 3, 4, 5, 4, 3, 4, 3, 2, 3, 4, \ldots.$$
It is interesting to look at the positions of the occurrences of the digits $1,2,3,\ldots$ in this sequence.
We display below a table representing the locations of the occurrences of these digits.
\begin{center}
\begin{tabular}{|c||c|c|c|c|c|c|}
\hline $n$ & $1^{\text{st}}$  & $2^{\text{nd}}$ & $3^{\text{rd}}$ & $4^{\text{th}}$ & Axxxxxx & Comment\\
\hline\hline $2$ & $1$ & $3$ & $7$ & $15$ & \seqnum{A000225}& $1$ run \\
\hline $3$ & $2$ & $4$ & $6$ & $8$ & \seqnum{A043569}  & $2$ runs\\
\hline $4$ & $5$ & $9$ & $11$ & $13$ & \seqnum{A043570} & $3$ runs\\
\hline $5$ & $10$ & $18$ & $20$ & $22$ & \seqnum{A043571} & $4$ runs\\
\hline $6$ & $21$ & $37$ & $41$ & $43$ & \seqnum{A043572} & $5$ runs \\
\hline $7$ & $42$ & $74$ & $82$ & $84$ & \seqnum{A043573} & $6$ runs\\
\hline $8$ & $85$ & $149$ & $165$ & $169$ & \seqnum{A043574} & $7$ runs\\
\hline $9$ & $170$ & $298$ & $330$ & $338$ & \seqnum{A043575} & $8$ runs\\
\hline $10$ & $341$ & & & & & $9$ runs\\
\hline
\end{tabular}
\end{center}
The ``first-occurrence'' sequence $0,1,2,5,10,21,\ldots$ is the Jacobsthal related sequence \seqnum{A000975}, with general term
$$a_n = 2 \frac{2^n}{3}-\frac{1}{2}-\frac{(-1)^n}{6}=\lfloor \frac{2^{n+1}}{3} \rfloor.$$
It satisfies $a_{2n}=2a_{2n-1}$, $a_{2n+1}=2a_{2n}+1$. It counts the number of steps to change from a binary string of $n$ $0$'s to $n$ $1$'s using a Gray code (J. Stadler). This sequence has generating function
$$\frac{x}{(1+x)(1-x)(1-2x)}.$$ The ``second-occurrence'' sequence beginning $0,3,4,9,18,\ldots$ has generating function
$$\frac{x(3-2x-2x^2+2x^3)}{(1+x)(1-x)(1-2x)}.$$
The table above now suggests the following conjecture.
\begin{conjecture} The locations of the  occurrences of the digit $n$ in the sequence $s_n$ are given by those numbers whose base $2$ representation has exactly $n-1$ runs.
\end{conjecture}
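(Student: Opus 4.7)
The conjecture is equivalent, after the shift $s_m = 1 + g_m$, to the statement that the partial-sum sequence $g_m = \sum_{k=0}^m j_k$ counts the number of runs in the binary expansion of $m$---that is, to the identification of $g_m$ with the OEIS entry \seqnum{A005811} that was asserted (but not proved) earlier in the paper. My plan is therefore to prove this identification; the conjecture then follows immediately, since the positions where $s_m = n$ are exactly those $m$ with $g_m = n-1$, i.e., with exactly $n-1$ runs in base $2$.

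Write $\rho(m)$ for the number of runs in the binary expansion of $m$, with $\rho(0) = 0$ and $\rho(1) = 1$. My approach is strong induction on $m$, matching the elementary recursion for $\rho$ against a two-step recursion for $j_n$ extracted from the generating function. For the latter, from $j(x) = \sum_{k \ge 0} x^{2^k}/(1+x^{2^{k+1}})$ one reads off the functional equation $j(x) = x/(1+x^2) + j(x^2)$; comparing coefficients of $x^n$ on both sides gives
$$ j_{2m+1} = (-1)^m \ (m \ge 0), \qquad j_{2m} = j_m \ (m \ge 1), \qquad j_0 = 0. $$
For $\rho$, the standard recursion (the new trailing bit either opens or extends a run) is, for $m \ge 1$,
$$ \rho(2m) = \rho(m) + [m \text{ odd}], \qquad \rho(2m+1) = \rho(m) + [m \text{ even}]. $$

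The inductive step is then a parity split. If $n + 1 = 2m + 1$, the $\rho$-recursions give $\rho(n+1) - \rho(n) = [m \text{ even}] - [m \text{ odd}] = (-1)^m = j_{n+1}$. If $n + 1 = 2m$ with $m \ge 1$, the two $\rho$-recursions collapse to $\rho(2m) - \rho(2m-1) = \rho(m) - \rho(m-1)$, which by the inductive hypothesis at the strictly smaller indices $m$ and $m-1$ equals $g_m - g_{m-1} = j_m = j_{2m} = j_{n+1}$. Either way $\rho(n+1) - \rho(n) = j_{n+1} = g_{n+1} - g_n$, which combined with $\rho(n) = g_n$ closes the induction.

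The only non-cosmetic step in this program is establishing $j_{2m+1} = (-1)^m$ and $j_{2m} = j_m$; I expect this to be the main (though mild) obstacle. It can be handled either via the functional equation displayed above or, alternatively, from the multiplicativity of the Kronecker symbol applied to the paper's formula $j_n = \left(\frac{-1}{n}\right)$ for $n > 0$. Everything else in the argument is routine bookkeeping.
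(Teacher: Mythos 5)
Your argument is correct, but be aware that the paper itself offers no proof of this statement: it is stated as a conjecture, supported only by the table of occurrence positions, and the author notes that it (together with the two neighbouring conjectures) was subsequently proved by Allouche and Shallit in the cited reference. So there is no internal proof to compare against; what you have supplied is a self-contained elementary argument. You correctly isolate the real content, namely that the assertion reduces (via $s_m=1+g_m$) to the identification of $g_m$ with the run-count $\rho(m)$ of \seqnum{A005811} -- an identification the paper simply imports from the OEIS in its preliminaries without justification. Your verification checks out at every step: the functional equation $j(x)=x/(1+x^2)+j(x^2)$ does follow by splitting off the $k=0$ term of the defining sum and reindexing, and since $x/(1+x^2)$ contributes only to odd powers while $j(x^2)$ contributes only to even ones, the coefficient comparison cleanly yields $j_{2m+1}=(-1)^m$ and $j_{2m}=j_m$ (consistent with the listed initial terms and with the multiplicativity of the Kronecker symbol). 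The run-count recursions are the standard append-a-bit observation, the identity $[m\ \text{odd}]=[(m-1)\ \text{even}]$ makes the even-index case collapse exactly as you say, and the strong induction closes because $m$ and $m-1$ are both strictly less than $2m$ for $m\ge 1$, with the base case $\rho(0)=g_0=0$. This upgrades the paper's Conjecture 1 to a theorem by elementary means, independent of the Allouche--Shallit machinery.
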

We next examine a link between the locations of the $1$'s and the $-1$'s in the Jacobi sequence and the elements of the sequence $s_n$. It is clear that the elements appearing in $s_n$ are determined by sign changes in the Jacobi sequence. The locations of $1$'s appearing in the Jacobi sequence are indexed by \seqnum{A091072} which begins
$$0,1, 2, 4, 5, 8, 9, 10, 13, 16, 17, 18, 20, 21, 25, 26, 29, 32,\ldots.$$ This is the sequence $a_n$ such that the odd part of $a_n$ is of the form $4k+1$. The characteristic function of this sequence is given by the paper-folding sequence \seqnum{A014577}.
We then have the following.
\begin{conjecture} Let $a_n$ denote the sequence such that the odd part of $a_n$ is of the form $4k+1$. Then we have
$$a_n+s_{a_n}=2n+1.$$
\end{conjecture}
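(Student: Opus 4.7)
The plan is to compute $s_{a_n}$ directly from its definition as a partial sum of the Jacobi sequence, and to exploit the fact that the enumeration $a_n$ picks out precisely the indices $k$ with $j_k=+1$. Recall from the preliminaries that $s_m=1+\sum_{k=0}^m j_k$ and that $j_k\in\{+1,-1\}$ for $k\ge 1$ while $j_0=0$. The first step is to split the partial sum into the contributions from $+1$'s and $-1$'s: writing $P_m$ and $N_m$ for the number of $k\in\{1,\ldots,m\}$ with $j_k=+1$ and $j_k=-1$ respectively, we have $P_m+N_m=m$ and $\sum_{k=0}^m j_k=P_m-N_m$.

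Next, I would specialize to $m=a_n$. By the characterization recalled in Section~2, the sequence $(a_n)$ with $a_0=0$ enumerates in increasing order the indices at which $j_k=+1$ (together with the index $0$, where $j_0=0$). Consequently $P_{a_n}=n$ and $N_{a_n}=a_n-n$, so that
$$\sum_{k=0}^{a_n} j_k \;=\; n-(a_n-n) \;=\; 2n-a_n.$$
Substituting into the definition of $s_{a_n}$ gives $s_{a_n}=1+2n-a_n$, which is exactly the desired identity $a_n+s_{a_n}=2n+1$. The boundary case $n=0$ is the trivial $0+s_0=0+1=1$.

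The only nontrivial ingredient is the equivalence of the two descriptions of $(a_n)$, namely ``positions of $+1$ in the Jacobi sequence'' and ``numbers whose odd part is $\equiv 1\pmod 4$''. This equivalence, already stated in Section~2, follows by writing $n=2^a m$ with $m$ odd and using the fact that $\left(\frac{-1}{n}\right)=\left(\frac{-1}{m}\right)=(-1)^{(m-1)/2}$, so that $j_n=+1$ if and only if $m\equiv 1\pmod 4$. Once this is in hand, the conjecture collapses to the elementary counting identity above, so there is no real obstacle; the statement can in fact be promoted from conjecture to theorem.
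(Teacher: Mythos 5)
Your argument is correct and complete, but note that the paper contains no proof of this statement to compare against: it is left as a conjecture, supported only by the numerical check $\{0,1,2,4,5,8,9,10,13\}+\{1,2,3,3,4,3,4,5,4\}=\{1,3,5,\ldots,17\}$, with the author remarking that Allouche and Shallit have since supplied proofs of the three conjectures of that section. Your route --- writing $\sum_{k=0}^{m}j_k=P_m-N_m$ with $P_m+N_m=m$, then specializing to $m=a_n$ where $P_{a_n}=n$ because $(a_n)$ is precisely the increasing enumeration of $\{0\}\cup\{k\ge 1: j_k=+1\}$ --- collapses the identity to $s_{a_n}=1+2n-a_n$ in one line, and the boundary case $n=0$ is handled correctly. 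The one nontrivial ingredient, the equivalence of ``odd part $\equiv 1\pmod 4$'' with ``$j_k=+1$,'' is exactly as you say a consequence of $j_{2m}=j_m$ (equivalently $\left(\tfrac{-1}{2}\right)=+1$ for the Kronecker symbol) together with $\left(\tfrac{-1}{m}\right)=(-1)^{(m-1)/2}$ for odd $m$; the paper asserts this identification in Section~2 without proof, so your argument is self-contained modulo that standard fact. This elementary counting proof does promote the statement from conjecture to theorem, and is considerably lighter than the automatic-sequence machinery one might otherwise reach for.
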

For instance, we have
$$\{0,1, 2, 4, 5, 8, 9, 10, 13\}+\{1, 2, 3, 3, 4, 3, 4, 5, 4\}=\{1,3,5,7,9,11,13,15,17\}.$$
The location of the $-1$'s in the Jacobi sequence is indexed by the sequence \seqnum{A091067} which begins
$$3, 6, 7, 11, 12, 14, 15, 19, 22, 23, 24, 27, 28,\ldots.$$ This is the sequence $b_n$ such that the odd part of $b_n$ is of the form $4k+3$. We then have the following.
\begin{conjecture} Let $b_n$ be the sequence such that the odd part of $b_n$ is of the form $4k+3$. Then
we have
$$b_n - s_{b_n}=2n+1.$$
\end{conjecture}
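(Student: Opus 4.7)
The plan is to recast $s_n$ as a signed counting function on a $\pm 1$-valued sequence and then read off the value at $n=b_n$ directly. The excerpt already notes that $s_n = \sum_{k=0}^n j'_k$, where the sequence $(j'_k)$ begins $1,1,1,-1,1,1,-1,-1,\ldots$; concretely, $j'_0 = 1$ and $j'_k = j_k = \left(\frac{-1}{k}\right)$ for $k \geq 1$. The crucial observation is that every $j'_k$ lies in $\{+1,-1\}$.

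First I would introduce the running counts
\[P(m) = \#\{0 \le k \le m : j'_k = +1\}, \qquad N(m) = \#\{0 \le k \le m : j'_k = -1\},\]
which satisfy the basic identities $P(m) + N(m) = m+1$ and $s_m = P(m) - N(m)$. Eliminating $P$ yields the clean formula $s_m = m + 1 - 2N(m)$.

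Next I would identify $b_n$ as an order statistic of $N$. By the characterization of \seqnum{A091067}, the sequence $b_0, b_1, b_2, \ldots$ enumerates in increasing order those $k \geq 1$ with $j_k = -1$. Since $j'_k = j_k$ for $k \geq 1$ and $j'_0 = +1 \neq -1$, this is the same as listing the positions of $-1$ in $(j'_k)_{k \geq 0}$. Hence $N(b_n) = n+1$, and substituting $m = b_n$ in the formula above yields
\[s_{b_n} = b_n + 1 - 2(n+1) = b_n - 2n - 1,\]
which rearranges to $b_n - s_{b_n} = 2n+1$, as claimed.

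The argument amounts to careful bookkeeping, so there is no substantial obstacle. The one place that demands care is the shift at $k=0$ between $j$ and $j'$: the modification $j_0 = 0 \leadsto j'_0 = 1$ alters $P$ but not $N$, and since $b_n \geq 3 > 0$ the identification $N(b_n) = n+1$ is unaffected. As a sanity check, the same template delivers Conjecture 2 as well: using $P(a_n) = n+1$ together with $s_m = 2P(m) - (m+1)$ gives $a_n + s_{a_n} = 2n+1$, mirroring the sign flip between the two conjectures.
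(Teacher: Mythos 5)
Your argument is correct, and it is worth noting that the paper itself offers no proof at all: this statement is left as a conjecture, with only a numerical spot-check, and the paper merely records that it was later proven by Allouche and Shallit. Your bookkeeping proof is complete and elementary. The identity $s_m = (m+1) - 2N(m)$ follows immediately from the fact that $s_m$ is the partial sum of the $\pm 1$-valued sequence $j'_k = 0^k + j_k$, and the step $N(b_n) = n+1$ is exactly the statement that $b_0 < b_1 < \cdots$ enumerates the positions of $-1$ in that sequence, so that positions $0,\ldots,b_n$ contain precisely the $n+1$ terms $b_0,\ldots,b_n$ equal to $-1$. The only external input is the identification of \seqnum{A091067} (numbers whose odd part is $\equiv 3 \bmod 4$) with the set $\{k : j_k = -1\}$; this is asserted without proof in the paper's preliminaries, and in any case follows in one line from $\left(\frac{-1}{2}\right) = 1$ and $\left(\frac{-1}{m}\right) = (-1)^{(m-1)/2}$ for odd $m$, so that $j_k$ depends only on the odd part of $k$. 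Your observation that the same computation with $P$ in place of $N$ yields the companion conjecture $a_n + s_{a_n} = 2n+1$ is also correct, and together the two identities are really one statement about the running count of signs. In short: you have supplied a proof where the paper has none, and a clean one.
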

For instance, we have
$$\{3, 6, 7, 11, 12, 14, 15, 19, 22\}-\{2, 3, 2, 4, 3, 3, 2, 4, 5\}=\{1,3,5,7,9,11,15,17\}.$$

We note that since this paper was written, the three conjectures above have now been proven by Allouche and Shallit \cite{Conjectures}.
\section{The Rueppel sequence - introduction}
The Rueppel sequence $r_n$ is the sequence \seqnum{A036987} that begins
$$1, 1, 0, 1, 0, 0, 0, 1, 0, 0, 0,\ldots,$$ with generating function given by
$$r(x)=\sum_{n \ge 0} x^{2^{n-1}}=1+x+x^3+x^7+x^{15}+\cdots.$$
The Catalan numbers \seqnum{A000108} are defined by
$$C_n = \frac{1}{n+1}\binom{2n}{n}.$$
These two sequences are related by
$$r_n = C_n \bmod 2,$$ and we would thus expect some of the properties of the sequence $r_n$ to be ``inherited'' from the Catalan number sequence. An example is given by expressing their generating functions as Jacobi continued fractions. It is well known that the generating function
$$c(x)=\frac{1-\sqrt{1-4x}}{2x}$$ of the Catalan numbers can be expressed as
$$c(x)=\cfrac{1}{1-x-
\cfrac{x^2}{1-2x-
\cfrac{x^2}{1-2x-
\cfrac{x^2}{1-2x-\cdots}}}}.$$
Thus the Catalan numbers are determined by the two sequences
$$1,1,1,\ldots$$
and
$$1,2,2,2,2,\ldots.$$
These sequences are called the Jacobi (or Markov) parameters of the sequence.

The Jacobi parameters for the Rueppel sequence are given by the sequences \cite{Bacher, Cigler}
$$-1,-1,-1,-1,\ldots$$ and
$$1, -2, 0, 0, 2, 0, -2, 0, 2, -2, 0,\ldots.$$ This latter sequence has generating function
$$\frac{1-2x-x^2}{1+x^2}+\frac{2}{x}\sum_{k \ge 0} \frac{x^{3\cdot 2^{k-1}}}{\prod_{i=0}^k 1+x^{2^i}}.$$
It given by $-$\seqnum{A110036}$(n+1)$.

The generating function $r(x)$ of $r_n$ then has a continued fraction expression that begins
$$\cfrac{1}{1-x+
\cfrac{x^2}{1+2x+
\cfrac{x^2}{1+
\cfrac{x^2}{1+
\cfrac{x^2}{1-2x+
\cfrac{x^2}{1-\cdots}}}}}}.$$
The sequence $-1,-1,-1,-1,\ldots$ is the negative of the same sequence for the Catalan numbers.

For a Jacobi continued fraction, we shall use the term \emph{$\alpha$-sequence} for the sequence of coefficients of $x$, and the term \emph{$\beta$-sequence} for the coefficients of the $x^2$ term.
Then \cite{Bacher, Cigler}
the $\alpha$-sequence of the Rueppel numbers is given by $-\seqnum{A110036}(n+1)$.

According to $\seqnum{A110036}$, the sequence
$a_n=\seqnum{A110036}$ gives the constant terms of the partial quotients of the continued fraction expansion of $$1+\sum_{n \ge 0} \frac{1}{x^{2^n}},$$
where each partial quotient has the form $\{x + a_n\}$ after the initial constant term of $1$. (Paul D. Hanna, Ralph Stephan).

We note that we can also express the Rueppel sequence $r_n$ as
$$r_n = \left(\sum_{k=0}^n \binom{2n-k}{n}\right) \bmod 2.$$
We also have
$$r_n^{(+)} = \left(\sum_{k=0}^n \binom{2n}{n+k}\right) \bmod 2,$$ where $r_n^{(+)}$ is the augmented Rueppel sequence, that is, the Rueppel sequence with a pre-pended $1$.

For a number triangle $T_{n,k}$, the triangles $T_{2n-k,n}$ and $T_{2n,,n+k}$ are called respectively the vertical half and the horizontal half \cite{Halves}.

Of interest as well is the sequence $r_{n+1}$. This sequence has the interesting property that it gives the aeration of the Rueppel sequence $r_n$. In terms of generating functions, we have
$$ r(x^2)=\frac{r(x)-1}{x}\quad \text{or}\quad r(x)-1=xr(x^2).$$

We have the following result \cite{Bacher, Cigler} for which we give an independent proof.
\begin{proposition} The Hankel transform $h_n^*$ of the shifted Rueppel sequence $r_{n+1}$ is given by
$$  h_0^*=h_0, h_{2n}^*=h_{n-1}^* h_n, h_{2n+1}^*=h_n^*h_n,$$ where
$h_n=(-1)^{\binom{n+1}{2}}$ gives the Hankel transform of $r_n$.
\end{proposition}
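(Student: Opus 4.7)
The plan hinges on a single structural observation: the identity $r(x) - 1 = x\, r(x^2)$ already recorded in the excerpt says that the shifted sequence $(r_{n+1})_{n \ge 0}$ is the \emph{aeration} of the Rueppel sequence itself, i.e., $r_{2k+1} = r_k$ and $r_{2k+2} = 0$ for every $k \ge 0$. Consequently the entry $r_{i+j+1}$ of the Hankel matrix $H_N^* = (r_{i+j+1})_{0 \le i,j \le N}$ vanishes whenever $i+j$ is odd, and equals $r_{(i+j)/2}$ otherwise. In particular, $H_N^*$ has nonzero entries only at positions $(i,j)$ where $i$ and $j$ have the same parity.

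Next I would reorder the indices $0, 1, \ldots, N$ to place the even ones first, followed by the odd ones, and apply this reordering simultaneously to the rows and columns of $H_N^*$. Because the same permutation $P$ acts on both sides, we have $\det(P H_N^* P^{\top}) = \det(P)^2 \det(H_N^*) = \det(H_N^*)$, so no sign correction is ever needed. The reordered matrix is block diagonal: its even-even block, at reindexed positions $(a,b)$, has entries $r_{2a+2b+1} = r_{a+b}$, so it is a Hankel matrix of $r$ itself; its odd-odd block, at reindexed positions $(a,b)$, has entries $r_{2a+2b+3} = r_{a+b+1}$, so it is a Hankel matrix of the shifted sequence.

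To read off the recurrence I would then split on the parity of $N$ and count block dimensions. If $N = 2m+1$, the evens $\{0, 2, \ldots, 2m\}$ and the odds $\{1, 3, \ldots, 2m+1\}$ each contain $m+1$ elements, so both blocks are $(m+1) \times (m+1)$ with determinants $h_m$ and $h_m^*$ respectively; this gives $h_{2m+1}^* = h_m h_m^*$. If $N = 2m$, there are $m+1$ evens and $m$ odds, yielding blocks of sizes $(m+1) \times (m+1)$ and $m \times m$ with determinants $h_m$ and $h_{m-1}^*$, so $h_{2m}^* = h_m h_{m-1}^*$. The base case $h_0^* = r_1 = 1 = h_0$ is immediate. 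The only delicate point in the outline is tracking the block dimensions carefully so that $h_m^*$ versus $h_{m-1}^*$ appears in the right parity; the permutation sign vanishes automatically once the reordering is applied on both sides, and once the aeration structure is visible everything else is bookkeeping.
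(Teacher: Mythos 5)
Your argument is correct and rests on the same key observation as the paper's proof, namely that $r(x)-1=xr(x^2)$ makes $r_{n+1}$ the aeration of $r_n$, so that the Hankel determinants of the shifted sequence factor through those of $r_n$ and of $r_{n+1}$ itself. The only difference is that the paper simply cites the general factorization of Hankel transforms of aerated sequences from the literature, whereas you prove it directly (and correctly, including the sign-free simultaneous permutation and the block-size bookkeeping) via the even/odd block-diagonal decomposition.
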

\begin{proof}
The Hankel transform of an aerated sequence involves the terms of the Hankel transform of the original sequence $h_n$ and those of Hankel transform of the aerated sequence, $h_n^*$. Specifically, the Hankel transform of an aerated sequence begins \cite{Wall}
$$ h_0, h_0^* h_0, h_0^* h_1, h_1^* h_1, h_1^* h_2, h_2^* h_2, h_2^*h_3,\ldots.$$ In this case, we can equate these terms to
$$h_0^*, h_1^*, h_2^*, h_3^*, h_4^*, h_5^*, h_6^*,\ldots.$$
The result now follows.
\end{proof}
We remark that the sequences $r_n$ and $r_{n+1}$ are among a small number of basic sequences whose Hankel transforms can be fully described.

It is known that the Hankel transform $h_n$ of the paper-folding sequence is (modulo $2$) periodic of period $10$ \cite{Fu}. We have
$$h_n \bmod 2= \overline{1, 1, 1, 0, 0, 1, 0, 0, 1, 1}.$$

Although we shall not use it in this note, we give the example of the period doubling sequence $\cite{PD}$ for which there is a description of its Hankel transform.
\begin{example} The period doubling sequence is the sequence whose generating function can be given by
$$\frac{1}{x}\sum_{k=0}^{\infty} \frac{(-1)^k x^{2^k}}{1-x^{2^k}}.$$
This is \seqnum{A035263}, which begins
$$1, 0, 1, 1, 1, 0, 1, 0, 1, 0, 1, 1, 1, 0, 1, 1,\ldots.$$
Its Hankel transform $h_n$ is \seqnum{A265025}, which begins
$$1, 1, -1, -3, 1, 1, -1, -15, 1, 1, -1, -3, 1, 1, -9, -495, 9, 1,\ldots.$$
We then have \cite{PD}
$$h_{2^n}=-J_{n+1}\prod_{3 \le i \le n}J_i^{2^{n-i}},$$ where
$$J_n = \frac{2^n}{3}-\frac{(-1)^n}{3}$$ is the $n$-th Jacobsthal number. The sequence $J_n$ \seqnum{A001045} has generating function
$$\frac{x}{1-x-2x^2}.$$
\end{example}

\section{Modifying the Rueppel sequence and Hankel transforms}
The Catalan numbers $C_n$ begin
$$1,1,2,5,14,42,\ldots.$$
The Hankel transform of this sequence, that is, the sequence of determinants $|C_{i+j}|_{\{0 \le i,j, \le n\}}$ is given by the sequence
$$1,1,1,1,\ldots,$$ determined by the $\beta$-sequence of the Catalan numbers. The Hankel transform of $r_n$ is given by the sequence
$$1, -1, -1, 1, 1, -1, -1, 1, 1, -1, -1,\ldots,$$ with general term $(-1)^{\binom{n+1}{2}}$. This again follows from the form of the $\beta$-sequence of this sequence.

For the two variants of the Catalan numbers given below,
$$1,1,1,2,5,14,42,\ldots,$$ and
$$1,-1,-1,-2,-5,-14,-42,\ldots,$$ we find that both their Hankel transforms are given by
$$1, 0, -1, -2, -3, -4, -5, -6, -7, -8, \ldots.$$ Taken modulo $2$, we get the sequence
$$1,0,1,0,1,0,1,0,1,\ldots.$$
We are thus prompted to look at the analagous variants of the Rueppel sequence.

We consider first the sequence $r_n^{(-)}$ with generating function $1-x r(x)$ that begins
$$1,-1, -1, 0, -1, 0, 0, 0, -1, 0, 0, 0,\ldots.$$
Note that we have
$$r_{2n}^{(-)}=r_n^{(-)}.$$
We have the following conjectures about the Hankel transform of this sequence.
\begin{conjecture}
The Hankel transform $h_n$ of the sequence $r_n^{(-)}$ begins
$$1, -2, 3, 2, -3, 4, 3, 2, -3, 4, -5,\ldots.$$
Then $|h_n|=s_n$ is the sequence $\seqnum{A088748}$, that is,
$$|h_n|=1+\sum_{k=0}^{n} j_k,$$ where
$$j_n=\left(\frac{-1}{n}\right)$$ is the Jacobi symbol.
\end{conjecture}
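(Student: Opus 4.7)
The plan is to establish the Jacobi continued fraction (JCF) expansion of the generating function $r^{(-)}(x) := 1 - xr(x)$ and then apply the product formula $h_n = \prod_{k=1}^n \beta_k^{n-k+1}$ from the introduction, which is insensitive to the $\alpha$-sequence. The structural input is the self-similarity $r_{2n}^{(-)} = r_n^{(-)}$ noted in the text (equivalently, the Mahler-type identity $r^{(-)}(x) + r^{(-)}(-x) = 2\,r^{(-)}(x^2)$), together with the fact that $r_{2k+1}^{(-)} = 0$ for $k \ge 1$ while $r_1^{(-)} = -1$. Running the quotient-difference unfolding by hand gives the first partial quotients $\alpha_0 = -1$, $\beta_1 = -2$, $|\beta_2| = 3/4$, $|\beta_3| = 4/9$, $|\beta_4| = 9/4$, $|\beta_5| = 8/9$, all consistent with the clean formula $|\beta_n| = s_n s_{n-2}/s_{n-1}^2$ (with the convention $s_{-1}:=1$, so that $|\beta_1| = s_1 = 2$).

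Substituting this guess into the product formula telescopes cleanly to
$$|h_n| = \prod_{k=1}^n \left(\frac{s_k s_{k-2}}{s_{k-1}^2}\right)^{n-k+1} = s_n,$$
so the conjecture reduces to the identity $|\beta_n| = s_n s_{n-2}/s_{n-1}^2$. As a complementary route for producing the recursion needed to drive an induction, I would reorder the rows and columns of $H_n^{(-)}$ so that even indices precede odd ones. By the self-similarity this puts $H_n^{(-)}$ in the block form
$$\begin{pmatrix} E & F \\ F^T & G \end{pmatrix},$$
where $E$ is itself a Hankel matrix of $r^{(-)}$ at half the size, $G$ is a Hankel matrix of the shifted sequence $(r_{n+1}^{(-)})$, and $F$ has a single nonzero entry $-1$ at position $(0,0)$. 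The rank-one structure of $F$ then yields, via Schur complements,
$$h_n^{(-)} = \det E \cdot \det G \cdot \left(1 - (E^{-1})_{00}(G^{-1})_{00}\right),$$
which expresses $h_n^{(-)}$ in terms of its own value at index $\lfloor n/2 \rfloor$, the Hankel transform of the shift, and the $(0,0)$ entries of two inverse Hankel matrices.

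The main obstacle will be closing this recursion, equivalently proving the guessed formula for $|\beta_n|$. I expect the induction to split into cases according to the residue of $n$ modulo $4$, paralleling the recursive description $\bar p_{2n+1} = \bar p_n$, $\bar p_{4n} = 0$, $\bar p_{4n+2} = 1$ of the paper-folding sequence together with the identity $s_{n+1} - s_n = 2 p_n - 1$ recorded in Section 2. One must track the Hankel transform of the shifted sequence $(r_{n+1}^{(-)})$ in parallel with $h_n^{(-)}$, so the proof proceeds by a simultaneous induction on a pair of Hankel sequences, and the $(0,0)$ entries of the inverse Hankel matrices encode values of the relevant monic orthogonal polynomials at $0$. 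Identifying these auxiliary quantities with the expressions that make the induction go through—along lines similar to the Bacher--Cigler analysis of the Rueppel Hankel transform in \cite{Bacher, Cigler} and the period-doubling example above—is where the technical core of the argument will lie.
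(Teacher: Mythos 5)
The paper does not prove this statement: it is presented as a conjecture and left open (the Allouche--Shallit paper cited resolves only the three conjectures of Section 3, not this one). So the relevant question is whether your proposal closes the gap, and it does not. The central step you need --- the identity $|\beta_n| = s_n s_{n-2}/s_{n-1}^2$ --- is not an independent lemma from which the conjecture follows by telescoping; it is the conjecture. For any sequence with nonvanishing Hankel determinants one has $\beta_n = h_n h_{n-2}/h_{n-1}^2$ identically, so asserting $|\beta_n| = s_n s_{n-2}/s_{n-1}^2$ is exactly the assertion $|h_n| = s_n$ (given $|h_0|=s_0$, $|h_1|=s_1$), and the telescoping computation merely inverts this tautology. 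Checking the formula for $n \le 5$ is the same finite verification the paper already performs by listing the first terms of $h_n$; it gives no inductive traction.

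The block decomposition is the genuinely substantive part of your plan, and it is correct as stated: reordering rows and columns by parity (an even permutation applied symmetrically, so determinant-preserving) does produce $E$ equal to the half-size Hankel matrix of $r^{(-)}$ via $r^{(-)}_{2n}=r^{(-)}_n$, $G$ equal to a Hankel matrix of the shift, and $F = -e_0e_0^T$ because $r^{(-)}_{2k+1}=0$ for $k\ge 1$; the Schur-complement identity you write down follows. But the recursion is not closed: you have not identified $(E^{-1})_{00}$ and $(G^{-1})_{00}$ (which are themselves ratios of Hankel-type minors, i.e., values of the associated orthogonal polynomials at $0$, and carry their own $2$-adic self-similar structure), nor have you determined the Hankel transform of the shifted sequence $(r^{(-)}_{n+1})$ that enters through $\det G$. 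You candidly flag this as "where the technical core of the argument will lie," which is accurate --- but it means the proposal is a plausible programme rather than a proof, and the statement remains, as in the paper, a conjecture.
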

Thus we conjecture that $|h_n|$ coincides with \seqnum{A088748}. We shall use the notation $\tilde{s}_n$ to denote the Hankel transform of $r_n^{(-1)}$.
\begin{conjecture} Let $\tilde{s}_n$ be the Hankel transform of $r_n^{(-1)}$. Then $$|\tilde{s}_n|=s_n.$$
\end{conjecture}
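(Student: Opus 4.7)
My plan is to derive a simple first-order recursion for $\tilde s_n$ via the Desnanot--Jacobi (Dodgson) identity applied to the Hankel matrix of $f(x):=1-xr(x)$, and then to reduce the conjecture to a single identity linking the shifted Rueppel Hankel transform $h_n^*$ of Proposition~1 to the paper-folding sequence $p_n$ of Section~2.

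Apply the Desnanot--Jacobi identity to the $(n+1)\times(n+1)$ Hankel matrix $H_n(f)$. The side Hankel determinants that appear, $D_k^{(i)}:=\det(f_{a+b+i})_{0\le a,b\le k}$ for $i=1,2$, both simplify: since $f_{m+1}=-r_m$, we have $D_k^{(1)} = (-1)^{k+1}(-1)^{\binom{k+1}{2}}$, whose square is $1$; since $f_{m+2}=-r_{m+1}$, Proposition~1 gives $D_k^{(2)} = (-1)^{k+1}h_k^*$. Substituting yields the clean recursion
\[
\tilde s_n \;=\; \frac{(-1)^n - h_{n-1}^*\,\tilde s_{n-1}}{h_{n-2}^*}, \qquad n\ge 2,
\]
with initial data $\tilde s_0=1$, $\tilde s_1=-2$ and the convention $h_{-1}^*:=1$. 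Testing the ansatz $\tilde s_n=(-1)^n h_{n-1}^* s_n$ and using the Section~2 identity $s_n-s_{n-1}=2p_{n-1}-1$, the recursion collapses, after elementary algebra on $\pm 1$'s, to the single identity $h_{n-1}^* h_{n-2}^* = 2p_{n-1}-1$ for all $n\ge 2$, i.e.\ to the key lemma below.

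Key lemma: $h_n^* h_{n-1}^* = 2p_n - 1$ for all $n\ge 0$. I would prove this by a short induction on $n$ using only the doubling relations of Proposition~1 and the paper-folding recurrences $p_{2n+1}=p_n$, $p_{4n}=1$, $p_{4n+2}=0$ recalled in Section~2. For odd $n=2m+1$, Proposition~1 gives $h_{2m+1}^* h_{2m}^* = (h_m^* h_m)(h_{m-1}^* h_m) = h_m^* h_{m-1}^*$, which matches $2p_{2m+1}-1=2p_m-1$ by the inductive hypothesis. For even $n=2m$, Proposition~1 gives $h_{2m}^* h_{2m-1}^* = (h_{m-1}^* h_m)(h_{m-1}^* h_{m-1}) = h_m h_{m-1} = (-1)^m$, which matches $2p_{2m}-1=(-1)^m$. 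The base $n=0$ reads $h_0^* h_{-1}^* = 1 = 2p_0-1$.

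The main obstacle is the sign bookkeeping in the Dodgson reduction --- in particular the identity $D_k^{(2)} = (-1)^{k+1}h_k^*$ must be verified carefully against the paper's indexing conventions. Once that is pinned down, the induction $\tilde s_n=(-1)^n h_{n-1}^* s_n$ propagates itself from the key lemma and the Dodgson recursion, immediately giving $|\tilde s_n|=s_n$. A secondary remark: the key lemma asserts a compatibility between two $2$-automatic sequences, so should the direct double induction above prove awkward, the $2$-automatic framework that Allouche and Shallit used to settle the three earlier conjectures of this paper should apply equally well here.
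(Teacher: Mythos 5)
The paper does not prove this statement: it is posed as a conjecture and left open (it is not among the three Section~3 conjectures that Allouche and Shallit settled), so your proposal is not an alternative to the paper's argument but an actual proof where the paper supplies none. I have checked it and find no gap. The Dodgson recursion $\tilde s_n h^*_{n-2}=(-1)^n-h^*_{n-1}\tilde s_{n-1}$ reproduces the paper's listed values $1,-2,3,2,-3,4,3,2,-3,4,-5,\dots$; the substitution of the ansatz $\tilde s_n=(-1)^n h^*_{n-1}s_n$ does collapse, via $s_n-s_{n-1}=j_n=2p_{n-1}-1$, to the single identity $h^*_{n-1}h^*_{n-2}=2p_{n-1}-1$; and your two-case induction closes correctly, the even case being the unconditional computation $h^*_{2m}h^*_{2m-1}=h_mh_{m-1}=(-1)^{m^2}=(-1)^m=2p_{2m}-1$ and the odd case reducing index $2m+1$ to index $m$. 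Since $h^*_{n-1}=\pm1$ and $s_n\ge 1$, the conclusion $|\tilde s_n|=s_n$ follows, and division by $h^*_{n-2}$ in the recursion is legitimate because the interior Hankel determinants never vanish. Two inputs should be flagged as external rather than proved: the closed form $h_n=(-1)^{\binom{n+1}{2}}$ for the Hankel transform of $r_n$ and the doubling rules of Proposition~1 for $h^*_n$, both of which the paper takes from Bacher and Cigler (the paper's own proof of Proposition~1 is only a sketch resting on the aeration formula from Wall). It is worth noting what your key lemma buys beyond the stated conjecture: it yields $\sigma_n=(-1)^n h^*_{n-1}$, hence $\sigma_n\sigma_{n+1}=-h^*_nh^*_{n-1}=1-2p_n$, which immediately disposes of the paper's subsequent conjectures $\frac{|\sigma_{n+1}-\sigma_n|}{2}=p_n$, $\frac{|\sigma_n+\sigma_{n+1}|}{2}=\bar p_n$, and $\tilde s_n-\sigma_n=\sigma_n g_n$ as well; your closing remark that the $2$-automatic machinery could substitute for the double induction is reasonable but, on this check, unnecessary.
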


The sequence $\sigma_n$ of signs given by
$$\sigma_n=\frac{h_n}{|h_n|}$$ begins
$$1, -1, 1, 1, -1, 1, 1, 1, -1, 1, -1,\ldots.$$ This sequence is of independent interest.
We have the following conjecture, where we define the paper-folding sequence $p_n$ \seqnum{A014577} by
$$p_n=\frac{1+j_{n+1}}{2}.$$
\begin{conjecture}
We have $$\frac{|\sigma_{n+1}-\sigma_n|}{2}=p_n.$$
\end{conjecture}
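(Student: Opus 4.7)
My plan is to first use the immediately preceding conjecture $|\tilde s_n|=s_n$ to reduce the claim to a statement about signs alone. Writing $h_n:=h_n^{(-)}$, that conjecture gives $h_n=\sigma_n s_n$ with $s_n>0$, so
$$\frac{|\sigma_{n+1}-\sigma_n|}{2}=p_n\ \Longleftrightarrow\ \frac{\sigma_{n+1}}{\sigma_n}=(-1)^{p_n}\ \Longleftrightarrow\ \operatorname{sign}\!\left(\frac{h_{n+1}^{(-)}}{h_n^{(-)}}\right)=(-1)^{p_n}.$$
In $J$-fraction language, using the standard identity $h_{n+1}/h_n=\prod_{k=1}^{n+1}\beta_k$, the claim becomes one about the signs of partial products of the $\beta$-sequence of the continued fraction expansion of $r^{(-)}(x)=1-xr(x)$.

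To extract these signs I would exploit the two self-similarity identities $r_{2n}^{(-)}=r_n^{(-)}$ (for all $n\ge 0$) and $r_{2n+1}^{(-)}=-\delta_{n,0}$, both immediate consequences of the functional equation $r^{(-)}(x^2)=r^{(-)}(x)+x$. Permuting rows and columns of the Hankel matrix $(r^{(-)}_{i+j})_{0\le i,j\le N}$ so that even-indexed ones precede odd-indexed ones is a symmetric permutation and therefore preserves the determinant; this yields a block form
$$\begin{pmatrix} A & B \\ B^{T} & D \end{pmatrix},$$
in which $A$ is itself a smaller Hankel matrix of $r^{(-)}$ (since $r^{(-)}_{2a+2b}=r^{(-)}_{a+b}$), $D$ is minus a Hankel matrix of the Rueppel sequence $r$ (since $r^{(-)}_{2a+2b+2}=-r_{a+b}$), and $B$ is the rank-one matrix with a single $-1$ at position $(0,0)$ and zeros elsewhere. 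Applying the Schur-complement identity together with the matrix-determinant lemma to this rank-one $B$ gives the recursion
$$h_N^{(-)}=\det(A)\det(D)-\det(A')\det(D'),$$
where $A'$ and $D'$ are obtained from $A$ and $D$ by deleting the first row and column. The factor $\det(D)$ is the explicit $\pm1$ coming from the Hankel transform of $r$, $\det(A')$ can be read off via the Proposition above on the Hankel transform of the shifted Rueppel sequence $r_{n+1}$, and $\det(A)$ itself is a smaller $h_M^{(-)}$ to which the induction hypothesis applies.

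The argument then proceeds by strong induction on $N$, organized by the residue of $N$ modulo $4$, which matches the defining recursion of the paper-folding sequence ($p_{4n}=1$, $p_{4n+2}=0$, $p_{2n+1}=p_n$); in each of the four residue classes the recursion above degenerates differently. The main obstacle I foresee is the sign analysis of the rank-one correction $\det(A')\det(D')$: its magnitude can match $\det(A)\det(D)$ so that the two terms nearly cancel, and the resulting sign of $h_N^{(-)}$ is then decided by which one dominates. Tracking this cancellation requires the \emph{full} strength of the preceding conjecture, i.e.\ the precise magnitudes $s_n$ rather than merely the non-vanishing of the Hankel determinants; moreover one must also analyze $\det(D')$, which involves the Hankel determinant of the twice-shifted Rueppel sequence $r_{n+2}$ (an ``anti-aerated'' Hankel matrix with zeros on every other antidiagonal) and is not supplied directly by the Proposition. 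An alternative route would be to derive the Jacobi $\alpha$- and $\beta$-sequences of $1-xr(x)$ directly from those of $r(x)$ via a continued-fraction surgery on the quotient $(1-xr(x)) = (D_0-x)/D_0$, and then recover $\sigma_n$ from the product formula $h_n^{(-)}=\prod_{k=1}^n(\beta_k^{(-)})^{n+1-k}$; however a compact closed form for the new $\alpha$-sequence is likely to be at least as intricate as $-\seqnum{A110036}(n+1)$ was in the original case, so I would attempt the block-matrix route first.
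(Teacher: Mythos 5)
The statement you are proving is one of the paper's open conjectures: the author gives no proof of it (it is supported only by computation, and unlike Conjectures 1--3 it is not among those later settled by Allouche and Shallit in the cited follow-up), so there is no proof in the paper to compare against. Assessed on its own terms, your preliminary reductions are correct and verifiable: since $\sigma_n\in\{\pm1\}$ the claim is indeed equivalent to $\sigma_{n+1}/\sigma_n=(-1)^{p_n}$; the functional equation $r^{(-)}(x^2)=r^{(-)}(x)+x$ does give $r^{(-)}_{2n}=r^{(-)}_n$, $r^{(-)}_{2n+1}=-\delta_{n,0}$ and $r^{(-)}_{m+1}=-r_m$; the even/odd symmetric permutation of the Hankel matrix yields exactly the block structure you describe; and the rank-one Schur-complement computation $\det=\det(A)\det(D)-\det(A')\det(D')$ checks out (modulo the usual care about invertibility of $A$ and $D$, which a polynomial-identity argument handles).

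Nevertheless what you have written is a strategy, not a proof, and the gaps you flag yourself are genuine and load-bearing. First, the whole reduction is conditional on $|\tilde s_n|=s_n$, which is itself only a conjecture in the paper, so at best you would obtain a conditional result. Second, $\det(D')$ is the Hankel determinant of the twice-shifted Rueppel sequence $r_{n+2}$; the paper's Proposition covers only $r_{n+1}$, and the $r_{n+2}$ Hankel matrix has a genuinely different structure (zero antidiagonals in different positions), so this term needs its own evaluation before the recursion is usable. Third, and most seriously, the entire content of the conjecture lives in the step you defer: deciding the sign of $\det(A)\det(D)-\det(A')\det(D')$ when the two products have comparable magnitude. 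Since $|h_N^{(-)}|=s_N$ is small (logarithmic in $N$) while the individual factors are $\pm$ products of $s$-values, the near-cancellation is the generic case, not an exceptional one, and no mechanism is proposed for resolving it beyond ``track it by strong induction modulo $4$.'' Until that inductive step is actually carried out in each residue class, the argument does not close, and the conjecture remains open.
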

In fact, we can say more than this.
\begin{conjecture}
We have $$\frac{|\sigma_n+\sigma_{n+1}|}{2}=\bar{p}_n.$$
\end{conjecture}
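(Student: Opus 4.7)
The plan is to derive this conjecture as an immediate corollary of the one immediately preceding it. The key observation is that $\sigma_n\in\{-1,+1\}$ for every $n$: by the previous conjecture $|\tilde{s}_n|=s_n$, and since $s_n\ge 1$ for all $n$ (the sequence $s_n$ begins $1,2,3,2,3,4,\ldots$ and is bounded below by $1$), the quotient $\sigma_n=h_n/|h_n|$ is a well-defined sign.

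A one-line case check then gives the universal identity
$$\frac{|\sigma_{n+1}-\sigma_n|}{2}+\frac{|\sigma_n+\sigma_{n+1}|}{2}=1,$$
valid for any pair $\sigma_n,\sigma_{n+1}\in\{-1,+1\}$: if $\sigma_n=\sigma_{n+1}$ the first summand is $0$ and the second is $1$, and if $\sigma_n\neq\sigma_{n+1}$ the roles are reversed. Combining this with the preceding conjecture $\frac{|\sigma_{n+1}-\sigma_n|}{2}=p_n$ and the defining identity $\bar{p}_n=1-p_n$ yields
$$\frac{|\sigma_n+\sigma_{n+1}|}{2}=1-p_n=\bar{p}_n,$$
which is the claim. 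In short, the present conjecture is logically equivalent to the previous one, given only that $\sigma_n$ takes values in $\{-1,+1\}$.

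The real obstacle, therefore, is not this step but the unproven input — the previous conjecture $\frac{|\sigma_{n+1}-\sigma_n|}{2}=p_n$. Any attack on the present statement from first principles would almost certainly have to establish that first. I would expect a proof there to exploit the aeration identity $r(x)-1=xr(x^2)$ together with the Jacobi continued fraction expansion of the modified generating function $1-xr(x)$, reading off the sign changes between consecutive Hankel determinants $\tilde{s}_n$ and $\tilde{s}_{n+1}$ from the $\beta$-sequence. The resulting recursion on the signs $\sigma_n$ should then match the known $2$-adic recursion $\bar{p}_{4n}=0$, $\bar{p}_{4n+2}=1$, $\bar{p}_{2n+1}=\bar{p}_n$ characterising the paper-folding sequence, delivering the previous conjecture and, via the elementary identity above, the present one as a free consequence.
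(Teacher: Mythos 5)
The statement you are asked to prove is presented in the paper as a \emph{conjecture}, with no proof supplied: the author merely records it after the preceding conjecture with the remark ``In fact, we can say more than this,'' and only the first three conjectures of the paper (those in the section on the distribution of elements in $s_n$) are reported as having been proven elsewhere, by Allouche and Shallit. So there is no proof in the paper to compare yours against.

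Your reduction itself is sound as far as it goes. The identity $\frac{|\sigma_{n+1}-\sigma_n|}{2}+\frac{|\sigma_n+\sigma_{n+1}|}{2}=1$ for $\sigma_n,\sigma_{n+1}\in\{-1,+1\}$ is a correct two-case check, and combined with $\bar{p}_n=1-p_n$ it shows that the present conjecture and the preceding one are logically \emph{equivalent} once one knows the Hankel determinants $\tilde{s}_n$ are nonzero (so that the signs are defined) --- a cleaner statement of the relationship than the paper's ``we can say more than this.'' But this is a conditional reduction, not a proof: the input $\frac{|\sigma_{n+1}-\sigma_n|}{2}=p_n$ is itself an unproven conjecture, as is the nonvanishing $|\tilde{s}_n|=s_n\ge 1$ that you invoke to make $\sigma_n$ well defined. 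Your closing paragraph sketching how one might attack the preceding conjecture (via the aeration identity and the Jacobi continued fraction of $1-xr(x)$) is plausible in outline but is not carried out, and the hard combinatorial content --- relating the sign changes of the Hankel determinants to the $2$-adic recursion for the paper-folding sequence --- is exactly what remains open. The statement therefore remains unproven by your argument, though the equivalence you establish is a genuine and correctly justified observation.
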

The sequence $\frac{\sigma_n+\sigma_{n+1}}{2}$ is a signed version of \seqnum{A014707}. It begins
$$0, 0, 1, 0, 0, 1, 1, 0, 0, 0, -1, -1, 0, 1, 1, 0, 0, 0, -1, 0, 0, -1,
-1, -1, 0, 0, -1,\ldots.$$

The positions $n$ where $\sigma_n \sigma_{n+1}=-1$ are indexed by the sequence that begins
$$0, 1, 3, 4, 7, 8, 9, 12, 15, 16, 17, 19, 20, 24, 25, 28, 31, 32, \ldots.$$
Incrementing by $1$, we get the sequence \seqnum{A091072}
$$	1, 2, 4, 5, 8, 9, 10, 13, 16, 17, 18, 20, 21, \ldots$$ whose characteristic function as we have seen is the paper-folding sequence $p_n$.

The sequence $\tilde{s}_n-\sigma_n$ begins
$$0, -1, 2, 1, -2, 3, 2, 1, -2, 3, -4, -3, -2, 3, 2, 1,\ldots.$$ Our conjecture concerning $\tilde{s}_n$ implies that $\sigma_n \tilde{s}_n=s_n$, or equivalently, $\tilde{s}_n=\sigma_n s_n$, and hence we would have
$$\tilde{s}_n - \sigma_n= \sigma_n s_n-\sigma_n=\sigma_n (s_n-1)=\sigma_n g_n.$$ We state this as a conjecture.
\begin{conjecture}
We have $$\tilde{s}_n - \sigma_n=\sigma_n g_n.$$
\end{conjecture}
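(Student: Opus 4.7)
The plan is to derive this identity as an essentially immediate corollary of the earlier conjecture that $|\tilde{s}_n| = s_n$. By the definition of $\sigma_n$ as $\tilde{s}_n/|\tilde{s}_n|$, whenever $\tilde{s}_n \neq 0$ we have $\tilde{s}_n = \sigma_n |\tilde{s}_n|$. The absolute-value conjecture supplies $|\tilde{s}_n| = s_n \geq 1$, which both guarantees the non-vanishing needed to make $\sigma_n$ well-defined and gives the factorisation $\tilde{s}_n = \sigma_n s_n$. Combining this with $g_n = s_n - 1$, a direct rearrangement of the definitions $s_n = 1 + \sum_{k=0}^{n} j_k$ and $g_n = \sum_{k=0}^{n} j_k$, we obtain
\[
\tilde{s}_n - \sigma_n \;=\; \sigma_n s_n - \sigma_n \;=\; \sigma_n(s_n - 1) \;=\; \sigma_n g_n,
\]
which is the claim. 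This is precisely the computation already sketched in the paragraph preceding the statement, so the only task here is to confirm that each step is justified by the definitions plus the absolute-value conjecture.

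Consequently all the substantive work lies in that earlier conjecture $|\tilde{s}_n| = s_n$. My approach to it would be to locate an explicit Jacobi-type continued fraction for the generating function $1 - x r(x)$ of $r_n^{(-)}$ and then read off $|\tilde{s}_n|$ via the product formula $h_n = \prod_{k=0}^{n} \beta_k^{n-k}$ quoted in the introduction. A natural starting point is the continued fraction for $r(x)$ itself given by Bacher and Cigler, together with the self-similarity $r(x) - 1 = x r(x^2)$. One would then transfer these parameters to the transformed series $1 - x r(x)$, probably via the corresponding shift on the orthogonal polynomial sequence, and identify the resulting $\beta$-sequence with one whose partial products have absolute value $s_n$. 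The presence of $s_n = 1 + g_n$, whose recursive structure is tied to the binary expansion of $n$, is encouraging, since it suggests the required $\beta$-sequence should itself admit a $2$-adic description.

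The main obstacle is unquestionably the absolute-value conjecture: the target identity drops out in one line once that input is secured, but establishing it demands controlling a family of Hankel determinants whose entries are the self-similar Rueppel values, exactly the kind of computation that Bacher and Cigler already flag as delicate. Any successful attack on the absolute-value conjecture will simultaneously deliver the non-vanishing of $\tilde{s}_n$ that is needed to define $\sigma_n$ in the first place, and thus complete the proof of the present conjecture.
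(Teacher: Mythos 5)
Your derivation is exactly the one the paper itself gives in the paragraph preceding the conjecture: both reduce the identity to the earlier conjecture $|\tilde{s}_n|=s_n$ via $\tilde{s}_n=\sigma_n|\tilde{s}_n|=\sigma_n s_n$ and $g_n=s_n-1$, so the statement remains conditional on that absolute-value conjecture (which the paper also leaves unproven). Your additional remarks on attacking $|\tilde{s}_n|=s_n$ through the Jacobi continued fraction of $1-xr(x)$ are a reasonable programme but go beyond what either you or the paper actually establishes.
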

Recalling that $g_n$ is the sequence of partial sums of the Jacobi sequence, it is interesting to find the first differences of $\tilde{s}_n-\sigma_n$, which we call $\tilde{j}_n$. Thus
$$\tilde{j}_n=\tilde{s}_n-\sigma_n.$$
The sequence $\tilde{j}_n$  begins
$$-1, 3, -1, -3, 5, -1, -1, -3, 5, -7, 1, 1, 5, -1, -1, -3, 5, -7, 1,
7, -9, 1, 1, 1, 5,\ldots.$$
We find (empirically) that the positions of $\pm 1$ in this sequence are indexed by
$$0, 2, 5, 6, 10, 11, 13, 14, 18, 21, 22, 23, 26, 27, 29, 30, 34, 37,\ldots,$$
or \seqnum{A255068}=\seqnum{A091067}$-1$. Thus annulling the elements of $\tilde{j}_n$ not equal to $\pm1 $ and taking absolute values gives the sequence that begins
$$1, 0, 1, 0, 0, 1, 1, 0, 0, 0, 1, 1, 0, 1, 1, 0, 0, 0, 1, 0, 0, 1, 1,
1, 0, 0, 1, 1, 0, 1, 1, 0, 0, 0, 1,$$ which is essentially \seqnum{A014707}. The complement of this sequence
begins
$$0, 1, 0, 1, 1, 0, 0, 1, 1, 1, 0, 0, 1, 0, 0, 1, 1, 1, 0, 1, 1, 0, 0,
0, 1,\ldots,$$ which coincides with the paper-folding variant \seqnum{A082410}.

The characteristic sequence of the terms of $\tilde{j}_n$ which are not equal to $\pm 1$ is the paper-folding sequence $p_{n+1}$.

We recall that the positions of $+1$ in the Jacobi sequence are given by the sequence $a_n$. We find that the sequence $\tilde{s}_{a_n}$ begins
$$1, -2, 3, -3, 4, -3, 4, -5, 4, -3, 4, -5, 5, -6, 4, -5, 4, -3, 4, -5,
5, -6, 5, -6, 7,\ldots,$$ with alternating signs. Thus we can conjecture that
$$\sigma_{a_n}=(-1)^n.$$
We do not find such regularity with respect to locations based on the $-1$ terms of the Jacobi sequence, defined by the sequence $b_n$. Thus we have that $\tilde{s}_{b_n}$ begins
$$2, 3, 2, -4, -3, 3, 2, -4, -5, -4, -3,\ldots.$$ The corresponding sequence of signs $\sigma_{b_n}$ begins
$$1, 1, 1, -1, -1, 1, 1, -1, -1, -1, -1, -1, -1, 1, 1, -1,\ldots.$$
We are unable to say anything about this sequence.

We now move to the Hankel transform of the sequence $r_n^{(+)}$ with generating function $1+xr(x)$ obtained by pre-pending a $1$ to the Rueppel sequence. Thus this sequence begins
$$1,1, 1, 0, 1, 0, 0, 0, 1, 0, 0, 0,\ldots,$$ with generating function
$$1+xr(x)=1+x+x^2+x^4+x^8+x^{16}+\ldots.$$
We then have the following conjecture. For this, we recall the notation
$$g_n=\sum_{k=0}^{n} j_k$$ to denote the sequence \seqnum{A005811} of the number of runs in the binary expansion of $n$ ($n \ge 0$) which begins
$$0, 1, 2, 1, 2, 3, 2, 1, 2, 3, 4, 3, 2, 3, 2,\ldots.$$
Then the sequence $g_n-1$ begins
$$-1, 0, 1, 0, 1, 2, 1, 0, 1, 2, 3, 2, 1, 2, 1, 0, 1,\ldots.$$
\begin{conjecture}
The Hankel transform $h_n$ of the sequence $r_n^{(+)}$ begins
$$1, 0, -1, 0, 1, 2, -1, 0, 1, 2, 3, -2, 1, 2, -1, 0, 1, \ldots.$$
For $n>0$, we have
$$|h_n|=g_n -1.$$
\end{conjecture}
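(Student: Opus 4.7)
My plan is to exploit the self-similarity of the generating function $f(x)=1+xr(x)$. Using $r(x)=1+xr(x^2)$ one obtains the clean recursion $f(x)=x+f(x^2)$, equivalent to the elementwise identities
\[ r^{(+)}_0=r^{(+)}_1=1,\quad r^{(+)}_{2k+1}=0 \ \text{for}\ k\ge 1,\quad r^{(+)}_{2k}=r^{(+)}_k \ \text{for}\ k\ge 1. \]
The complementary shift identity $r^{(+)}_{n+1}=r_n$ also links the tail of $r^{(+)}$ back to the Rueppel sequence itself, whose Hankel transform $(-1)^{\binom{n+1}{2}}$ is already known.

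The main step is a block decomposition of $H^{(+)}_n=(r^{(+)}_{i+j})_{0\le i,j\le n}$ by index parity. After gathering the even-indexed rows and columns first and the odd-indexed ones second, the identities above show that the two diagonal blocks are a smaller Hankel matrix $H^{(+)}_{\lfloor n/2\rfloor}$ and a Rueppel Hankel matrix $H^{\mathrm{R}}_{\lceil n/2\rceil-1}$, while the off-diagonal block is the rank-one bump $e_0e_0^{\top}$ coming from the single surviving odd value $r^{(+)}_1=1$. Taking a Schur complement with respect to the invertible Rueppel block yields a determinantal recursion of the shape
\[ \det H^{(+)}_n \;=\; \pm\,\det H^{(+)}_{\lfloor n/2\rfloor}\cdot\bigl(\text{rank-one correction}\bigr), \]
where the correction is governed by a single entry of $(H^{\mathrm{R}}_{\lceil n/2\rceil -1})^{-1}$, hence a ratio of two Rueppel Hankel sub-determinants each of absolute value $1$, and so an explicit small integer.

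The final step is to match this recursion against the parity-driven recursion $g_{2m}=g_m+[m\text{ odd}]$, $g_{2m+1}=g_m+[m\text{ even}]$ (for $m\ge 1$) satisfied by the number-of-runs function, and to induct on $\lfloor\log_2 n\rfloor$. The base case $|h_1|=0=g_1-1$ explains the $-1$ offset, and the vanishing $|h_{2^k-1}|=0$ mirrors $g_{2^k-1}=1$ (the all-ones binary representation has a single run). The main obstacle I anticipate is the careful bookkeeping of the rank-one correction across the four residues of $n\bmod 4$: one must show that it increases $|\det H^{(+)}_{\lfloor n/2\rfloor}|$ by exactly one whenever the leading bit of $n$ creates a new run and leaves it unchanged otherwise. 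This is also precisely why the conjecture is stated for the absolute value only; propagating the Schur-complement signs through the recursion appears markedly more delicate and would likely need a separate argument tied to the paper-folding behaviour observed elsewhere in the note.
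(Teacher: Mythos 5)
First, note that the paper offers no proof of this statement: it is presented purely as a conjecture supported by numerical data (and it is not among the three conjectures of Section 3 later proved by Allouche and Shallit). So your proposal cannot be compared against a proof in the paper; it has to stand on its own, and as it stands it has two genuine gaps, even though the skeleton is sound. Your setup is correct: with $f(x)=1+xr(x)=x+f(x^2)$ the parity-permuted matrix does have diagonal blocks $H^{(+)}_{\lfloor n/2\rfloor}$ and the Rueppel Hankel matrix $H^{\mathrm R}_{\lceil n/2\rceil-1}$, the off-diagonal block is $e_0e_0^{\top}$, and the Rueppel block is invertible with determinant $(-1)^{\binom{m+1}{2}}$, so the Schur complement step is legitimate and yields
$$\det H^{(+)}_n=\det(D)\,\det H^{(+)}_{\lfloor n/2\rfloor}-\mathrm{cof}_{00}(D)\cdot\det\bigl(r_{i+j+1}\bigr)_{0\le i,j\le \lfloor n/2\rfloor-1},$$
where $D=H^{\mathrm R}_{\lceil n/2\rceil-1}$. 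The second factor in the correction is the shifted-Rueppel Hankel determinant handled by Proposition 1, which is fine.

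The first gap is your claim that the correction is ``a ratio of two Rueppel Hankel sub-determinants each of absolute value $1$.'' The numerator $\mathrm{cof}_{00}(D)$ is the Hankel determinant of the \emph{twice}-shifted Rueppel sequence $r_{n+2}=0,1,0,0,0,1,0,\ldots$, and this is not $\pm1$-valued: its values for sizes $1,2,3,4,5$ are $0,-1,0,1,0$. Evaluating this cofactor sequence for all sizes is itself a Hankel-determinant problem of exactly the same flavour as the one you are trying to solve, and the paper explicitly remarks that only $r_n$ and $r_{n+1}$ belong to the ``small number of basic sequences whose Hankel transforms can be fully described''; you cannot take this input for granted. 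The second gap is the final induction. Your recursion has the shape $h_n=\pm h_{\lfloor n/2\rfloor}\pm c$ with $c\in\{0,\pm1\}$ (granting the previous point), and you want to conclude $|h_n|=|h_{\lfloor n/2\rfloor}|+[\text{new run}]$. But whether adding $c$ raises or lowers $|h_{\lfloor n/2\rfloor}|$ by one depends on the sign of $h_{\lfloor n/2\rfloor}$ relative to the correction, and the data in the paper shows those signs are irregular (they encode the paper-folding sequence $\sigma_n$). So an induction on absolute values alone cannot close; you would have to strengthen the hypothesis to a signed formula for $h_n$ — precisely the bookkeeping you defer as ``markedly more delicate.'' Until the cofactor sequence is evaluated and the signed induction is carried through all residues of $n\bmod 4$, this remains a plausible strategy rather than a proof.
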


The Catalan matrix $(c(x), xc(x)^2)=(c(x), c(x)-1)$ is of combinatorial importance. Thus it behoves us to look at the arrays $(r(x), r(x)-1)$ and $(r(x), xr(x)^2)$. The expansion of the difference $r(x)-1-xr(x)^2$ begins
$$0, 0, -2, 0, -2, -2, 0, 0, -2, -2, 0, -2, 0, 0, 0, 0, -2, -2, 0, -2, 0,\dots,$$
which modulo two gives $0,0,0,\ldots$, but the arrays $(r(x), r(x)-1)$ and $(r(x), xr(x)^2)$ are distinct.

The array $(r(x), r(x)-1)$ begins
$$\left(
\begin{array}{cccccccc}
 1 & 0 & 0 & 0 & 0 & 0 & 0 & 0 \\
 1 & 1 & 0 & 0 & 0 & 0 & 0 & 0 \\
 0 & 1 & 1 & 0 & 0 & 0 & 0 & 0 \\
 1 & 1 & 1 & 1 & 0 & 0 & 0 & 0 \\
 0 & 2 & 2 & 1 & 1 & 0 & 0 & 0 \\
 0 & 0 & 3 & 3 & 1 & 1 & 0 & 0 \\
 0 & 1 & 1 & 4 & 4 & 1 & 1 & 0 \\
 1 & 1 & 3 & 3 & 5 & 5 & 1 & 1 \\
\end{array}
\right).$$ Its row sums begin
$$1, 2, 2, 4, 6, 8, 12, 20, 30, 44, 68, 104, 156, 236, 360, 548, 830,\ldots,$$
with Hankel transform equal to $2^n (-1)^{binom{n+1}{2}}$. The Jacobi parameters begin
$$[2, -2, 0, 0, 2, 0, -2, 0, 2, -2,\ldots$$ and
$$-2,-1,-1,-1,-1,\ldots$$ respectively for the $\alpha$ and $\beta$ sequences. When $(r(x), r(x)-1)$ is taken modulo $2$, the row sums give Gould's sequence. The inverse array, when taken modulo $2$, has row sums equal to the Farey denominators \seqnum{A007306}$(n+1)$. We note that the halves of the array, taken modulo $2$, have row sums that begin
$$1, 2, 2, 2, 2, 4, 2, 4, 2, 4, 4, 6, 2, 4, 4, 8, 2,\ldots$$ and
$$1, 2, 2, 3, 3, 4, 5, 4, 4, 5, 5, 6, 7, 8, 7, 5, 5, 7,\ldots$$ for the horizontal and vertical halves, respectively.
The horizontal half sequence $a_n$ beginning $1, 2, 2, 2, 2, 4, 2, 4, 2, 4, 4,\ldots$ appears amenable to the following analysis.
We have $a_0=1$, $a_1=2$, and for $n>1$, if $n$ is even, $a_n=a_{\frac{n}{2}}$, while if $n$ is odd, we have
$$a_n =2 a_{\frac{n-1}{2}}-2 If[\log_4\left(3 \frac{n-1}{2}+1\right) \,\text{is an integer}, 1,0].$$

\section{The sequence $\tilde{s}_n$}
In this section, we look at the distribution of the negative and positive digits in the sequence $\tilde{s}_n$. We recall that this sequence begins
$$1, -2, 3, 2, -3, 4, 3, 2, -3, 4, -5, -4, -3,\ldots.$$
The negative elements in the sequence $\tilde{s}_n$ occur at the following positions
$$1, 4, 8, 10, 11, 12, 16, 18, 19, 21, 22, 23, 24, 26, 27, 28, 32, 34,\ldots,$$ while the positive digits occur at the positions given by the complementary sequence
$$0, 2, 3, 5, 6, 7, 9, 13, 14, 15, 17, 20, 25, 29, 30, 31, 33, 36, 40, 42, \ldots.$$
We first look at the occurrences of the positive digits in $\tilde{s}_n$, which we summarize in the table below.
\begin{center}
\begin{tabular}{|c||c|c|c|c|c|c|}
\hline $n$ & $1^{\text{st}}$  & $2^{\text{nd}}$ & $3^{\text{rd}}$ & $4^{\text{th}}$ & Axxxxxx & Comment\\
\hline\hline $2$ & $3$ & $7$ & $15$ & $31$ & $2*$\seqnum{A000225}$+1$ & $2(1$ run)$+1$\\
\hline $3$ & $2$ & $6$ & $14$ & $30$ & $2*$\seqnum{A000225} & $2(1$ run$)$ \\
\hline $4$ & $5$ & $9$ & $13$ & $17$ & $2*$\seqnum{A043569}$+1$ & $2(2$ runs$)+1$ \\
\hline $5$ & $20$ & $36$ & $40$ & $44$ & $2*$\seqnum{A043571} & $2(4$ runs$)$ \\
\hline $6$ & $43$ & $75$ & $83$ & $87$ & $2*$\seqnum{A043572}$+1$ & $2(5$ runs$)+1$\\
\hline $7$ & $42$ & $74$ & $82$ & $86$ & $2*$\seqnum{A043572} & $2(5$ runs) \\
\hline $8$ & $85$ & $149$ & $165$ & $169$ & $2*$\seqnum{A043573}$+1$ & $2(6$ runs$)+1$\\
\hline $9$ & $340$ & $596$ & $$ & $$ & $2*$\seqnum{A043575} & $2(8$ runs$)$ \\
\hline $10$ & $$ & & & & \\
\hline
\end{tabular}
\end{center}
We can analyze the ``first occurence'' sequence $3,2,5,20,\ldots$ as follows. We consider the sequence \seqnum{A005187}, which can be defined as the sequence $t_n$ with
$$t_n=2n-\seqnum{A000120}(n),$$ where
\seqnum{A000120} counts the number of $1$'s in the binary expansion of $n$. This sequence begins
$$0, 1, 3, 4, 7, 8, 10, 11, 15, 16, 18, 19,\ldots.$$ We let $b_n$ be the sequence \seqnum{A042964}, of numbers that are congruent to $2$ or $3$ modulo $4$.
Then the sequence $3,2,5,20,\ldots$ is given by
$$J_{a_{\frac{n+4}{2}}}\frac{1-(-1)^n}{2}+\left(J_{b_{\frac{n+1}{2}}}-1\right)\frac{1+(-1)^n}{2},$$ where
$J_n$ is the $n$-th Jacobsthal number.

Turning now to the negative entries, we have the following table.
\begin{center}
\begin{tabular}{|c||c|c|c|c|c|c|c|}
\hline $n$ & $1^{\text{st}}$  & $2^{\text{nd}}$ & $3^{\text{rd}}$ & $4^{\text{th}}$ & Axxxxxx & Comment \\
\hline\hline $-2$ & $1$ & $$ & $$ & $$ &  & \\
\hline $-3$ & $4$ & $8$ & $12$ & $16$ & $2*$\seqnum{A043569} & $2(2$ runs$)$ \\
\hline $-4$ & $11$ & $19$ & $23$ & $27$ & $2*$\seqnum{A043570}$+1$ & $2(3$ runs$)+1$\\
\hline $-5$ & $10$ & $18$ & $22$ & $26$ & $2*$\seqnum{A043570} & $2(3$ runs$)$\\
\hline $-6$ & $21$ & $37$ & $41$ & $45$ & $2*$\seqnum{A043571}$+1$ & $2(4$ runs$)+1$ \\
\hline $-7$ & $84$ & $148$ & $164$ & $168$ & $2*$\seqnum{A043573} & $2(6$ runs$)$\\
\hline $-8$ & $171$ & $299$ & $331$ & $339$ & $2*$\seqnum{A043574}$+1$ & $2(7$ runs$)+1$\\
\hline $-9$ & $170$ & $298$ & $330$ & $338$ & $2*$\seqnum{A043574} & $2(7$ runs$)$ \\
\hline $-10$ & $341$ & $597$ & $$ & $$ & $2*$\seqnum{A043575}$+1$ & $2(8$ runs$)+1$\\
\hline
\end{tabular}
\end{center}
We note that the first occurrence sequence of the negative entries which begins $$0,1,4,11,10,21,84,170,341,\ldots$$ is determined by the Jacobsthal numbers. We can conjecture the following expression for the $n$-th term of this sequence.
$$\frac{1}{2}\left(J_{n+2}\left(1+(-1)^{\frac{n+1}{2}}\right)+J_{n+1}\left(1-(-1)^{\frac{n+1}{2}}\right)\right)(1-(-1)^n)+$$
$$\quad\frac{1}{2}\left(J_{n+2}\left(1-(-1)^{\frac{n}{2}}\right)+J_{n+1}\left(1+(-1)^{\frac{n}{2}}\right)\right)(1+(-1)^n)-\frac{1+(-1)^n}{2}.$$
The corresponding generating function is then given by
$$\frac{x(1+2x+7x^2-4x^3+12x^4-8x^5)}{1-2x+4x^2-8x^3-x^4+2x^5-4x^6+8x^7}.$$ The denominator factorizes as
$$(1-x^2)(1+x^2)(1-2x)(1+4x^2).$$
An alternative approach to the sequence $1,4,11,10,\ldots$ is to  consider the sequence
$$a_n=2n+\frac{1+(-1)^n}{2},$$ or \seqnum{A042963} of numbers that are congruent to $1$ or $2$ modulo $4$, and the sequence
$$b_n=2n-\frac{1-(-1)^n}{2},$$ or \seqnum{A042948} of numbers that are congruent to $0$ or $1$ modulo $4$. Then the sequence $1,4,11,\ldots$ is given by
$$J_{a_{\frac{n+2}{2}}}\frac{1+(-1)^n}{2}+J_{b_{\frac{n+3}{2}}}\frac{1-(-1)^n}{2}-\frac{1-(-1)^n}{2}.$$
We next look at the locations of the negative numbers $-n, n >0$ relative to their position in the sequences that give the locations of $n$ in $s_n$. For instance, the locations of the entry $3$ in $s_n$ are given by the sequence
$$2, 4, 6, 8, 12, 14, 16, 24, 28, 30, 32, 48, 56, 60, 62, 64, \ldots.$$
The locations of $-3$ in $\tilde{s}_n$ are given by
$$4, 8, 12, 16, 24, 28, 32, 48, 56, 60, 64, 96, 112, 120, 124, 128,
192, 224,\ldots,$$ which is a subsequence of the first sequence.

Relative to the $3$ in $s_n$ sequence, this latter sequence is indexed by
$$1, 3, 4, 6, 7, 8, 10, 11, 12, 13, 15, 16, 17, 18, 19, 21, 22, 23,\ldots.$$
This is \seqnum{A007401}. This sequence $t_n$ has many interesting properties. For instance, the sequence $t_n-n$ begins
$$1, 2, 2, 3, 3, 3, 4, 4, 4, 4, 5, 5, 5, 5, 5, 6, 6, 6, 6, 6, 6,\ldots,$$
which is \seqnum{A002024} (``$n$ appears $n$ times''). Thus
$$t_n-n=\lfloor \frac{1+\sqrt{1+8n}}{2} \rfloor, \quad \text{or} \quad t_n=n+\lfloor \frac{1+\sqrt{1+8n}}{2} \rfloor.$$ The generating function of $t_n$ is given by
$$t(x)=\frac{x}{(1-x)^2}+\frac{1}{1-x} \prod_{k=1}^{\infty} \frac{1-x^{2k}}{1-x^{2k-1}}.$$
Alternatively, we can express this as
$$t(x)=\frac{x}{(1-x)^2}+\frac{1}{1-x} \theta_2(0,\sqrt{x})/(2x^{\frac{1}{8}}).$$
Here, $\theta_2(z,x)$ is the Jacobi elliptic function $\theta_2$ \cite{WW}.
Similarly, the sequence $t_n-n-1$ begins
$$0, 1, 1, 2, 2, 2, 3, 3, 3, 3, 4, 4, 4, 4, 4, 5, 5, 5, 5, 5, 5,\ldots.$$ This is \seqnum{A003056} (``$n$ appears $n+1$ times''). We then have
$$t_n-n-1=\lfloor \frac{\sqrt{8n+1}-1}{2} \rfloor,\quad \text{or}\quad t_n=1+n+\lfloor \frac{\sqrt{8n+1}-1}{2} \rfloor.$$
We can express the sequence $t_n$ in triangle form $T$ as the triangle that begins
$$\left(
\begin{array}{ccccccc}
 1 & 0 & 0 & 0 & 0 & 0 & 0 \\
 3 & 4 & 0 & 0 & 0 & 0 & 0 \\
 6 & 7 & 8 & 0 & 0 & 0 & 0 \\
 10 & 11 & 12 & 13 & 0 & 0 & 0 \\
 15 & 16 & 17 & 18 & 19 & 0 & 0 \\
 21 & 22 & 23 & 24 & 25 & 26 & 0 \\
 28 & 29 & 30 & 31 & 32 & 33 & 34 \\
\end{array}
\right).$$
The general $(n,k)$ term of this matrix is then given by
$$t_{n,k}=\frac{(n+1)(n+2)+2k}{2}=\frac{n(n+3)}{2}+k+1.$$
We can factorize this matrix as the following product.
$$\left(
\begin{array}{ccccccc}
 1 & 0 & 0 & 0 & 0 & 0 & 0 \\
 1 & 1 & 0 & 0 & 0 & 0 & 0 \\
 1 & 1 & 1 & 0 & 0 & 0 & 0 \\
 1 & 1 & 1 & 1 & 0 & 0 & 0 \\
 1 & 1 & 1 & 1 & 1 & 0 & 0 \\
 1 & 1 & 1 & 1 & 1 & 1 & 0 \\
 1 & 1 & 1 & 1 & 1 & 1 & 1 \\
\end{array}
\right)\cdot \left(
\begin{array}{ccccccc}
 1 & 0 & 0 & 0 & 0 & 0 & 0 \\
 2 & 4 & 0 & 0 & 0 & 0 & 0 \\
 3 & 3 & 8 & 0 & 0 & 0 & 0 \\
 4 & 4 & 4 & 13 & 0 & 0 & 0 \\
 5 & 5 & 5 & 5 & 19 & 0 & 0 \\
 6 & 6 & 6 & 6 & 6 & 26 & 0 \\
 7 & 7 & 7 & 7 & 7 & 7 & 34 \\
\end{array}
\right).$$
We can represent the triangle $T$ as the element-wise sum matrix of the following triangle
$$\left(
\begin{array}{cccccc}
 1 & 0 & 0 & 0 & 0 & 0 \\
 2 & 1 & 0 & 0 & 0 & 0 \\
 2 & 1 & 1 & 0 & 0 & 0 \\
 2 & 1 & 1 & 1 & 0 & 0 \\
 2 & 1 & 1 & 1 & 1 & 0 \\
 2 & 1 & 1 & 1 & 1 & 1 \\
\end{array}
\right),$$
which is the triangle of the sequence that begins
$$1, 2, 1, 2, 1, 1, 2, 1, 1, 1, 2, 1, 1, 1, 1, 2, 1, 1, 1, 1, 1, 2, 1, 1, 1, \ldots.$$
This sequence then has generating function
$$\frac{x}{1-x}+ \theta_2(0,\sqrt{x})/(2x^{\frac{1}{8}}).$$
The $t_n$ triangle $T$ has the following factorization.
$$\left(
\begin{array}{cccccc}
 1 & 0 & 0 & 0 & 0 & 0 \\
 2 & 1 & 0 & 0 & 0 & 0 \\
 2 & 1 & 1 & 0 & 0 & 0 \\
 2 & 1 & 1 & 1 & 0 & 0 \\
 2 & 1 & 1 & 1 & 1 & 0 \\
 2 & 1 & 1 & 1 & 1 & 1 \\
\end{array}
\right)\cdot \left(
\begin{array}{cccccc}
 1 & 0 & 0 & 0 & 0 & 0 \\
 1 & 4 & 0 & 0 & 0 & 0 \\
 3 & 3 & 8 & 0 & 0 & 0 \\
 4 & 4 & 4 & 13 & 0 & 0 \\
 5 & 5 & 5 & 5 & 19 & 0 \\
 6 & 6 & 6 & 6 & 6 & 26 \\
\end{array}
\right).$$
We can also represent the sequence
$$1, 2, 1, 2, 1, 1, 2, 1, 1, 1, 2, 1, 1, 1, 1, 2, 1, 1, 1, 1, 1, 2, 1, 1, 1, \ldots$$ as a square array read by anti-diagonals. This takes the following form.
$$\left(
\begin{array}{cccccc}
 1 & 1 & 1 & 1 & 1 & 1 \\
 2 & 1 & 1 & 1 & 1 & 1 \\
 2 & 1 & 1 & 1 & 1 & 1 \\
 2 & 1 & 1 & 1 & 1 & 1 \\
 2 & 1 & 1 & 1 & 1 & 1 \\
 2 & 1 & 1 & 1 & 1 & 1 \\
\end{array}
\right).$$
Multiplying this by the inverse binomial matrix $B^{-1}$ on the left,  we obtain the simple array
$$\left(
\begin{array}{cccccc}
 1 & 1 & 1 & 1 & 1 & 1 \\
 1 & 0 & 0 & 0 & 0 & 0 \\
 -1 & 0 & 0 & 0 & 0 & 0 \\
 1 & 0 & 0 & 0 & 0 & 0 \\
 -1 & 0 & 0 & 0 & 0 & 0 \\
 1 & 0 & 0 & 0 & 0 & 0 \\
\end{array}
\right).$$
This array has generating function
$$f(x,y)=\frac{1}{1-y}+\frac{x}{1+x}.$$ Thus the previous array will have generating function
$$\frac{1}{1-x} f\left(\frac{x}{1-x},y\right)=\frac{1-x(y-1)}{(1-x)(1-y)}.$$
We deduce that the triangle that begins
$$\left(
\begin{array}{cccccc}
 1 & 0 & 0 & 0 & 0 & 0 \\
 2 & 1 & 0 & 0 & 0 & 0 \\
 2 & 1 & 1 & 0 & 0 & 0 \\
 2 & 1 & 1 & 1 & 0 & 0 \\
 2 & 1 & 1 & 1 & 1 & 0 \\
 2 & 1 & 1 & 1 & 1 & 1 \\
\end{array}
\right)$$ has generating function given by
$$\frac{1-x(xy-1)}{(1-x)(1-xy)}.$$
In a similar manner we can represent the sequence \seqnum{A007401} by the square array that begins
$$\left(
\begin{array}{cccccc}
 1 & 4 & 8 & 13 & 19 & 26 \\
 3 & 7 & 12 & 18 & 25 & 33 \\
 6 & 11 & 17 & 24 & 32 & 41 \\
 10 & 16 & 23 & 31 & 40 & 50 \\
 15 & 22 & 30 & 39 & 49 & 60 \\
 21 & 29 & 38 & 48 & 59 & 71 \\
\end{array}
\right).$$
Multiplying again by the inverse binomial matrix, we can encode this as the array that begins
$$\left(
\begin{array}{cccccc}
 1 & 4 & 8 & 13 & 19 & 26 \\
 2 & 3 & 4 & 5 & 6 & 7 \\
 1 & 1 & 1 & 1 & 1 & 1 \\
 0 & 0 & 0 & 0 & 0 & 0 \\
 0 & 0 & 0 & 0 & 0 & 0 \\
 0 & 0 & 0 & 0 & 0 & 0 \\
\end{array}
\right).$$
Thus, for example, the second column $4,7,11,16,\ldots$ has generating function given by the binomial transform of the polynomial $4+3x+x^2$. This is given by $\frac{2x^2-5x+4}{(1-x)^3}$. In general, the $n$-th column will have generating function equal to
$$\frac{n(n+3)}{2}x^2-((n+2)^2-4)x+\frac{(n+1)(n+4)}{2}-1.$$
We may also consider the matrix
$$B^{-1}\left(
\begin{array}{cccccc}
 1 & 4 & 8 & 13 & 19 & 26 \\
 3 & 7 & 12 & 18 & 25 & 33 \\
 6 & 11 & 17 & 24 & 32 & 41 \\
 10 & 16 & 23 & 31 & 40 & 50 \\
 15 & 22 & 30 & 39 & 49 & 60 \\
 21 & 29 & 38 & 48 & 59 & 71 \\
\end{array}
\right)(B^{-1})^T.$$ This yields the matrix
$$\left(
\begin{array}{cccccc}
 1 & 3 & 1 & 0 & 0 & 0 \\
 2 & 1 & 0 & 0 & 0 & 0 \\
 1 & 0 & 0 & 0 & 0 & 0 \\
 0 & 0 & 0 & 0 & 0 & 0 \\
 0 & 0 & 0 & 0 & 0 & 0 \\
 0 & 0 & 0 & 0 & 0 & 0 \\
\end{array}
\right)$$ with generating function
$$f(x,y)=(1+x)^2+y(3x+1)+y^2.$$
Thus the square matrix representing the sequence \seqnum{A007401} will have its generating function given by
$$\frac{1}{(1-x)(1-y)}f\left(\frac{x}{1-x}, \frac{y}{1-y}\right),$$ which gives
$$\frac{1+y-y^2-(5-3y)xy-(y-2)x^2y}{(1-x)^3(1-y)^3}.$$ As a triangle read by rows, \seqnum{A007401} will then have generating function
$$\frac{1+xy-x^2y^2-(5-3xy)x^2y-(xy-2)x^3y}{(1-x)^3(1-xy)^3}.$$

The relative locations of  $+3$ in the sequence $\tilde{s}_n$ are given by the sequence which is complementary to \seqnum{A007401}. This sequence begins
$$0, 2, 5, 9, 14, 20, 27, 35, 44, 54, 65, 77,\ldots.$$
This is \seqnum{A000096}, with general term $$\frac{n(n+3)}{2}=n+\binom{n+1}{2}=\sum_{k=0}^{n-1} (k+2).$$ Its generating function is given by
$$ \frac{x(2-x)}{(1-x)^3}.$$
Thus this sequence is the partial sum sequence of
$$0,2,3,4,5,6,7,8,\ldots,$$ which is itself the partial sum sequence of
$$0,2,1,1,1,1,1,1,\ldots.$$
We now look at the case of $\pm 4$. The locations of $\pm 4$ in the sequence $\tilde{s}_n$ are given by the locations of $4$ in $s_n$. These are specified by the sequence that begins
$$5, 9, 11, 13, 17, 19, 23, 25, 27, 29, 33, 35, 39, 47,\ldots.$$
The locations of $(-4)$ are given by the subsequence that begins
$$11, 19, 23, 27, 35, 39, 47, 51, 55, 59, 67, 71, 79, 95,\ldots.$$
Relative to the $\pm 4$ sequence, this sequence is specified by the index sequence
$$2,5,6,8,11,12,13,15,16,18,22,23,24,26,\ldots.$$ The first differences of this sequence are given by \seqnum{A073645}, which begins
$$2, 3, 1, 2, 3, 1, 1, 2, 1, 2, 3, 1, 1, 1, 2, 1, 1, 2, 1, 2, 3, 1, 1,\ldots.$$
We can arrange this sequence in the irregular triangle that begins
$$
\begin{array}{cccccccccc}
 2 & 3 &  &  &  &  &  & &  &  \\
 1 & 2 & 3 &  &  &  &  &  &  &  \\
 1 & 1 & 2 & 1 & 2 & 3 &  &  &  &  \\
 1 & 1 & 1 & 2 & 1 & 1 & 2 & 1 & 2 & 3 \\
\end{array}
$$
Summing, we get the array
$$
\begin{array}{cccccccccc}
 2 & 5 &  &  &  &  &  &  &  &  \\
 6 & 8 & 11 &  &  &  &  &  &  &  \\
 12 & 13 & 15 & 16 & 18 & 21 &  &  &  &  \\
 22 & 23 & 24 & 26 & 27 & 28 & 30 & 31 & 33 & 36 \\
\end{array}
$$
The interest in doing this is as follows. If we drop the first $2$ from the first triangle, we get
$$
\begin{array}{cccccccccc}
 3 &  &  &  &  &  &  &  &  &  \\
 1 & 2 & 3 &  &  &  &  &  &  &  \\
 1 & 1 & 2 & 1 & 2 & 3 &  &  &  &  \\
 1 & 1 & 1 & 2 & 1 & 1 & 2 & 1 & 2 & 3 \\
\end{array}
$$
We can continue the construction of this irregular triangle by using the rules (Benoit Cloitre)
$$ 3 \to 123, \quad 2 \to 12,\quad 1 \to 1.$$
The corresponding  sequence for positive $4$ in $\tilde{s}_n$ is \seqnum{A212013}, which begins
$$1, 3, 4, 7, 9, 10, 14, 17, 19, 20, 25, 29, 32, 34, 35, 41, 46,\ldots.$$
To describe the general term of this sequence we use the function
$$trinv(n)=\lfloor \frac{1+\sqrt{8n+1}}{2} \rfloor = \lfloor \sqrt{2(n+1)}+\frac{1}{2} \rfloor.$$
Then the general term of this sequence is given by
$$\binom{trinv(n)}{3}+n+\left(n-\binom{trinv(n)}{2}\right)\left(trinv(n)(trinv(n)+3)-2n-2\right)/4.$$
This sequence is described in the OEIS as giving the total number of pairs of states of the first $n$ sub-shells of the nuclear shell model in which the sub-shells are ordered by energy level in increasing order.

Insight into the structure of this sequence can be obtained by representing it as a square array, read by anti-diagonals, as follows.
$$\left(
\begin{array}{ccccccc}
 1 & 4 & 10 & 20 & 35 & 56 & 84 \\
 3 & 9 & 19 & 34 & 55 & 83 & 119 \\
 7 & 17 & 32 & 53 & 81 & 117 & 162 \\
 14 & 29 & 50 & 78 & 114 & 159 & 214 \\
 25 & 46 & 74 & 110 & 155 & 210 & 276 \\
 41 & 69 & 105 & 150 & 205 & 271 & 349 \\
 63 & 99 & 144 & 199 & 265 & 343 & 434 \\
\end{array}
\right).$$
Transforming this by multiplying on the left by the inverse binomial matrix, and on the right by its transpose, leads to the simple encoding
$$\left(
\begin{array}{ccccccc}
 1 & 3 & 3 & 1 & 0 & 0 & 0 \\
 2 & 3 & 1 & 0 & 0 & 0 & 0 \\
 2 & 1 & 0 & 0 & 0 & 0 & 0 \\
 1 & 0 & 0 & 0 & 0 & 0 & 0 \\
 0 & 0 & 0 & 0 & 0 & 0 & 0 \\
 0 & 0 & 0 & 0 & 0 & 0 & 0 \\
 0 & 0 & 0 & 0 & 0 & 0 & 0 \\
\end{array}
\right).$$
This has generating function
$$(1 + 2x + 2x^2 + x^3) + y(3 + 3x + x^2) + y^2(3 + x) + y^3.$$
By undoing the binomial transforms in $x$ and $y$, we obtain the result that the generating function of the square array for the sequence \seqnum{A212013} is given by
$$\frac{1+(y^2-3y-1)x-(y^3-2y^2-y-1)x^2-yx^3}{(1-x)^4(1-y)^4}.$$
As a triangle read by row, \seqnum{A212013} will have then have generating function
$$\frac{1+(x^2y^2-3xy-1)x-(x^3y^3-2x^2y^2-xy-1)x^2-yx^4}{(1-x)^4(1-xy)^4}.$$
\section{Some further results}
The sequence
$$1,-1,-1,-2,-5,-14,-42,\ldots$$  has the Hankel transform
$$1,0,-1,-2,-3,-4,-5,-6,\ldots.$$
Taking this sequence modulo $2$, we obtain the sequence
$$1,0,1,0,1,0,1,0,1,0,1,0,\ldots.$$
The Hankel transform of the sequence $r_n^{(-)}$, which begins
$$1,-1, -1, 0, -1, 0, 0, 0, -1, 0, 0, 0,\ldots,$$ then ``inherits'' this property.
Thus the Hankel transform
$$1, -2, 3, 2, -3, 4, 3, 2, -3, 4, -5,\ldots$$ gives the parity sequence
$$1,0,1,0,1,0,1,0,1,0,1,0,\ldots.$$
Now consider the sequence
$$1,-2,-1,-2,-5,-14,-42,\ldots.$$
The Hankel transform of this sequence begins
$$1, -3, -28, -92, -213, -409, -698, -1098, -1627, -2303, \ldots,$$
which is the sequence $\frac{2+n-3n^2-6n^3}{2}.$ Taken modulo $2$, we obtain the sequence
$$1, 1, 0, 0, 1, 1, 0, 0, 1, 1, 0, 0, 1, 1, 0, \ldots.$$
Similarly, the sequence
$$1,-2, -1, 0, -1, 0, 0, 0, -1, 0, 0, 0,\ldots,$$ has a Hankel transform that begins
$$1, -5, 6, 2, -3, 7, 6, 2, -3, 7, -8, -4, -3, 7, 6, 2, \ldots.$$
Taking this modulo $2$, we again obtain
$$1, 1, 0, 0, 1, 1, 0, 0, 1, 1, 0, 0, 1, 1, 0, \ldots.$$

The difference between the Hankel transforms of
$$1,-2,-1,-2,-5,-14,-42,\ldots \quad\text{and }\quad 1,-1,-1,-2,-5,-14,-42,\ldots$$
is given by
$$0, -3, -27, -90, -210, -405, -693, -1092, -1620, -2295, -3135, \ldots.$$
This is $\frac{3(n+1)(1-2n)}{2}$, which is divisible by $3$.
Taking this modulo $2$, we obtain the sequence
$$0, 1, 1, 0, 0, 1, 1, 0, 0, 1, 1.$$

The difference between the Hankel transforms of
$$1,-2, -1, 0, -1, 0, 0, 0, -1, 0, 0, 0,\ldots, \quad\text{and}\quad 1,-1, -1, 0, -1, 0, 0, 0, -1, 0, 0, 0,\ldots$$ is given by
$$0, -3, 3, 0, 0, 3, 3, 0, 0, 3, -3, 0, 0, 3, 3, 0, 0, 3, -3, 0, 0,\ldots.$$
This is divisible by $3$, and modulo $2$ gives the sequence
$$0, 1, 1, 0, 0, 1, 1, 0, 0, 1, 1.$$

Taking the sequence that begins
$$1,-3, -1, 0, -1, 0, 0, 0, -1, 0, 0, 0,\ldots$$ now, we see that this has a Hankel transform that begins
$$1, -10, 11, 2, -3, 12, 11, 2, -3, 12, -13, -4, -3, 12, 11, 2, \ldots.$$
The difference between this and the sequence $1, -5, 6, 2, -3, 7, 6, 2, -3,\ldots$ is
$$0, -5, 5, 0, 0, 5, 5, 0, 0, 5, -5, 0, 0, 5, 5, 0, 0, 5, -5, 0, 0,\ldots.$$
In general, the modified Rueppel sequence $r_n^{(-)}(k)$ that begins
$$1,-k, -1, 0, -1, 0, 0, 0, -1, 0, 0, 0,\ldots$$ has a Hankel transform that begins
$$1, - k^2 - 1, k^2 + 2, 2, -3, k^2 + 3, k^2 + 2, 2, -3, k^2 + 3, - k^2 - 4, -4, -3, \ldots.$$
This is equal to the sequence that begins
$$0, - k^2, k^2, 0, 0, k^2, k^2, 0, 0, k^2, - k^2, 0, 0, k^2, k^2, 0, 0, k^2, - k^2, 0, 0,\ldots$$ added to the sequence (given by $k=0$)
$$1, -1, 2, 2, -3, 3, 2, 2, -3, 3, -4, -4, -3, 3, 2, 2, -3, 3, -4, -4, 5,\ldots.$$
This last sequence is the interpolation of the sequence beginning
$$-1, 2, 3, 2, 3, -4, 3, 2, 3, -4,\ldots$$ into the sequence that begins
$$1, 2, -3, 2, -3, -4, -3, 2, -3, -4,\ldots.$$
\begin{proposition}
The sequence
$$1, -1, 2, 2, -3, 3, 2, 2, -3, 3, -4, -4,\ldots$$ is the Hankel transform of the aerated $r_n^{(-)}$ sequence.
\end{proposition}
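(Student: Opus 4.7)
The plan is to apply directly the general aerated-sequence formula invoked in the proof of the earlier Rueppel proposition. Writing $h_n$ for the Hankel transform of $r_n^{(-)}$ (this is the sequence $\tilde{s}_n$) and $h_n^*$ for the Hankel transform of its shift, that formula asserts that the Hankel transform of the aerated $r_n^{(-)}$ begins
$$h_0,\ h_0^* h_0,\ h_0^* h_1,\ h_1^* h_1,\ h_1^* h_2,\ h_2^* h_2,\ h_2^* h_3,\ \ldots,$$
or more compactly $B_0=h_0$, $B_{2n+1}=h_n^* h_n$, and $B_{2n+2}=h_n^* h_{n+1}$. In the earlier Rueppel setting the shift and the aerated sequence coincided (because $r(x)-1=xr(x^2)$), which turned this formula into a self-referential recursion; here the two are genuinely distinct, so the formula gives the answer outright.

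The key computation is to identify the shift of $r_n^{(-)}$ explicitly. Since $r^{(-)}(x)=1-xr(x)$, we have $(r^{(-)}(x)-1)/x=-r(x)$, so the shifted sequence is simply $-r_n$. Multiplying a sequence by $-1$ scales its $(n+1)\times(n+1)$ Hankel determinant by $(-1)^{n+1}$, and combining this with the known Hankel transform $(-1)^{\binom{n+1}{2}}$ of $r_n$ gives
$$h_n^* \;=\; (-1)^{n+1}(-1)^{\binom{n+1}{2}} \;=\; (-1)^{\binom{n+2}{2}},$$
which is the $4$-periodic pattern $-1,-1,1,1,-1,-1,1,1,\ldots$.

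Substituting this $h_n^*$ together with the values $\tilde{s}_n=1,-2,3,2,-3,4,3,2,-3,4,-5,\ldots$ (read off from the conjectural description earlier in the paper, or checked by direct computation for the initial terms) produces term by term the sequence $1,-1,2,2,-3,3,2,2,-3,3,-4,-4,\ldots$. The two interleaved subsequences singled out in the paragraph preceding the proposition fall out naturally: the odd-indexed entries $h_n^*\tilde{s}_n$ give $-1,2,3,2,3,-4,\ldots$, and the even-indexed entries $h_n^*\tilde{s}_{n+1}$ give $1,2,-3,2,-3,-4,\ldots$.

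The main obstacle is that the aerated-sequence formula is quoted from Wall rather than derived inside the paper, and that the formula for $\tilde{s}_n$ used at the last step is itself still a conjecture. A self-contained derivation of the aerated identity would proceed by block-diagonalizing the Hankel matrix of the aerated sequence via the permutation that sorts row and column indices by parity, splitting it as a direct sum of a Hankel block for $r_n^{(-)}$ and a Hankel block for its shift, with the sign contributions of the row and column permutations cancelling. Granted the formula and the values of $\tilde{s}_n$, the only genuinely new ingredient needed here is the identification of the shifted sequence as $-r_n$ and the resulting sign twist that turns $h_n = (-1)^{\binom{n+1}{2}}$ into $h_n^* = (-1)^{\binom{n+2}{2}}$.
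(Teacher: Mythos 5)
Your proof is correct, and it reaches the result by a genuinely different and more explicit route than the paper. The paper's own proof simply observes that the aeration of $r_n^{(-)}$ coincides with the $k=0$ member of the family $r_n^{(-)}(k)$, so the claimed sequence is read off by setting $k=0$ in the previously displayed Hankel transform of that family, with the interpolation structure attributed to Wall's general result and no further computation. You instead run Wall's interleaving formula forwards: you identify the shift of $r_n^{(-)}$ as $-r_n$ via $(r^{(-)}(x)-1)/x=-r(x)$, deduce $h_n^*=(-1)^{n+1}(-1)^{\binom{n+1}{2}}=(-1)^{\binom{n+2}{2}}$ from the known Hankel transform of the Rueppel sequence, and obtain the claimed sequence as the explicit signed interleaving $B_{2n+1}=h_n^*\tilde{s}_n$, $B_{2n+2}=h_n^*\tilde{s}_{n+1}$ (I checked the first twelve terms; they agree). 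This buys a closed-form description of the two interleaved subsequences as sign-twisted copies of $\tilde{s}_n$, which the paper's proof does not provide, and your remark that the shift and the aeration coincide in the earlier Rueppel proposition but not here correctly explains why the formula is self-referential there and direct here. Both arguments rest on the same two unproved inputs --- Wall's factorization of the Hankel determinants of an aerated sequence (which your parity block-diagonalization sketch would supply, the row and column permutation signs indeed cancelling) and the values of $\tilde{s}_n$, which in the paper are still only conjectural --- so your proof is no less rigorous than the paper's, and your explicit acknowledgement of these dependencies is appropriate.
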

\begin{proof}
This follows since the sequence with $k$=0 coincides with the aeration of $r_n^{(-)}$. That is, we have
$$r_n^{(-)}(0)=r_{\frac{n}{2}}^{(-)} \frac{1+(-1)^n}{2}.$$
The interpolation result then follows from the general result on the Hankel transform of aerated sequences \cite{Wall}.
\end{proof}
Thus the Hankel transform of $r_n^{(-)}(k)$ is given by
$$ \{1,-1,2,2,-3,3,2,2,\ldots\}-k^2\{0,1,-1,0,0,-1,-1,0,0,-1,1,0,\ldots\}.$$
The second sequence $d_n$ that begins
$$0,1,-1,0,0,-1,-1,0,0,-1,1,0,\ldots$$ deserves some exploration. We conjecture the following.
We have $d_{4n}=d_{4n+3}=d_{8n+7}=0$ for all $n \ge 0$. To characterize $d_{4n+1}$, we form the sequence
$$1-\frac{d_{4n+1}+1}{2},$$ which begins
$$0, 1, 1, 1, 1, 0, 1, 1, 1, 0, 0, 0, 1, 0, 1, 1, 1, 0, 0, 0, 0, 1, 0, 0, 1, 0,\ldots.$$
We posit that this is \seqnum{A268411}, which gives the parity of the number of runs of $1$'s in binary representation of $n$. The sequence $d_{4n+2}$ appears more elusive.

A more complete characterization of the sequence $d_n$ appears to be as follows.
\begin{itemize}
\item The $0$'s are located at positions $n$ where $n$ is congruent to $0$ or $3 \bmod 4$ (\seqnum{A014601})
\item The values $\pm 1$ are located at positions $n$ where $n$ is congruent to $1$ or $2 \bmod 4$ (\seqnum{A042963})
\item The distribution of $1$'s is indexed (within \seqnum{A042963}) by the balanced evil numbers (\seqnum{A268412})
\item The distribution of the $-1$'s is indexed by the balanced odious numbers (\seqnum{A268415}).
\end{itemize}
The balanced evil numbers are those numbers which have an even number of runs of $1$'s in their binary expansion, while the balanced odious numbers have an odd number of runs of $1$'s in their binary expansion.

We can in fact display the sequence $d_{n+1}$ as a Hankel transform in a manner that explains its structure. Thus we consider the generating functions $\frac{1}{r(x)}$ and $\frac{x}{r(x)-1}=\frac{1}{r(x^2)}$.
The generating function $\frac{1}{r(x)}$ expands to give the sequence \seqnum{A104977} which begins
$$1, -1, 1, -2, 3, -4, 6, -10, 15, -22, 34, -52, 78, -118, 180, -274,\ldots.$$
We describe this sequence as ``Hankel rich'' in the sense that its Hankel transform, and that of successive shifts, are of significance. For the moment, we consider its own Hankel transform, which is given by
$$1, 0, -1, 0, 1, 0, -1, 0, 1, 0, -1, 0, 1, 0, -1, 0, 1, 0, -1, 0, 1,\ldots$$ with generating function
$$\frac{1}{1+x^2}.$$
Now the Hankel transform of the corresponding aerated sequence
$$1, 0, -1, 0, 1, 0, -2, 0, 3, 0, -4, 0, 6, 0, -10, 0, 15, 0, -22, 0, 34,0,\ldots$$  will then begin
$$1, -1, 0, 0, -1, -1, 0, 0, -1, 1, 0, 0, -1, -1, 0, 0, -1, 1, 0, 0, 1,\ldots.$$
This is the product of the Hankel transform of the expansion of $\frac{1}{r(x)}$, doubled, and the left shifted doubled Hankel transform of the expansion  of $(\frac{1}{r(x)}-1)/x$. This latter sequence begins
$$-1, 1, -2, 3, -4, 6, -10, 15, -22, 34, -52, 78, -118, 180, -274,\ldots,$$ with Hankel transform
which begins $$-1, 1, 1, -1, 1, 1, 1, -1, 1, -1, -1, -1, 1, 1, 1, -1, 1, -1, -1, 1,\ldots.$$
Thus the Hankel transform of the aerated sequence is the term-by-term product of
$$1, 1, 0, 0, -1, -1, 0, 0, 1, 1, 0, 0, -1, -1, 0, 0, 1, 1, 0, 0, -1, -1, 0,\ldots$$
with
$$1, -1, -1, 1, 1, 1, 1, -1, -1, 1, 1, 1, 1, 1, 1, -1, -1, 1, 1, -1, -1, -1, -1, \ldots.$$
In considering these generating functions, we are led to the following conjecture.
\begin{conjecture} The Hankel transform $h_n$ of the expansion of $\frac{x r(x)}{r(x)-1}$ satisfies
$$|h_n|=g_{n+1}.$$
\end{conjecture}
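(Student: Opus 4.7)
The starting point is to simplify the generating function using the functional equation $r(x)-1=xr(x^2)$ established earlier in the paper, which gives
$$\frac{xr(x)}{r(x)-1}=\frac{r(x)}{r(x^2)}.$$
Thus the Hankel transform in question is that of the moment sequence of $r(x)/r(x^2)$, a ratio of Rueppel series with evident self-similar structure under $x\mapsto x^2$. This self-similarity is the main structural input we have available, and a successful proof should route through it.

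My plan is to attack this via the Jacobi continued fraction approach used throughout the paper. Concretely, I would expand $r(x)/r(x^2)$ to a large number of terms, extract the associated $\alpha$- and $\beta$-sequences via the three-term recurrence of the monic orthogonal polynomials, and search for a closed form for $\beta_k$ (in the OEIS and by matching against binary-expansion statistics). Once the $\beta$-sequence is identified, the product formula $|h_n|=\prod_{k=1}^n |\beta_k|^{n-k+1}$ should be compared with $g_{n+1}$, and the conjectural identity should collapse to a telescoping product. Because $g_{n+1}$ satisfies dyadic recurrences such as $g_{2n+1}=g_n+1$ inherited from the Jacobi sequence, one expects the $|\beta_k|$ to be small rationals with dyadic structure that directly encode run counts of $n$ in base $2$.

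An alternative, possibly cleaner route is to exploit the aeration identity used in the proof of the shifted-Rueppel proposition: the Hankel transform of an aerated sequence interleaves the Hankel transforms of the base sequence and its shift. Since $r(x^2)$ involves only even-power terms, the denominator $1/r(x^2)$ is itself an aerated series, and one may hope to bootstrap from the known Hankel transforms of $r_n$ and $r_{n+1}$ to the conjectural one by treating the even- and odd-indexed Hankel determinants of $r(x)/r(x^2)$ separately. In this picture $g_{n+1}$ should emerge as a count of nested aeration depths, matching the run-counting interpretation of A005811.

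The main obstacle is that here the $\beta$-sequence must encode quantitative run-count information, not merely signs: for $r_n$ the $\beta$-entries are $\pm 1$, so its Hankel transform is purely a sign sequence, whereas the unbounded growth of $g_{n+1}$ forces the $|\beta_k|$ of $r(x)/r(x^2)$ to take infinitely many distinct absolute values with a fractal-like pattern. Proving rigorously that these telescope into $g_{n+1}$ will likely require establishing a functional equation for the J-fraction itself under $x\mapsto x^2$, mirroring the self-similarity of $r(x)/r(x^2)$. This is where I expect the technical work to concentrate, and it is the reason the statement is posed here as a conjecture rather than a proposition.
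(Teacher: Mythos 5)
This statement is posed in the paper as a conjecture, and the paper supplies no proof of it; there is therefore nothing to compare your argument against, and the relevant question is whether your proposal actually closes the gap. It does not. After the (correct) simplification $\frac{xr(x)}{r(x)-1}=\frac{r(x)}{r(x^2)}$, everything that follows is a description of an empirical search rather than an argument: ``expand to a large number of terms, extract the $\beta$-sequence, search for a closed form, compare with $g_{n+1}$.'' The comparison step is moreover circular as stated: the $\beta_k$ of a J-fraction are recovered from the Hankel determinants via $\beta_k=h_kh_{k-2}/h_{k-1}^2$, so writing $|h_n|=\prod_k|\beta_k|^{n-k+1}$ and observing that it ``telescopes to $g_{n+1}$'' is equivalent to assuming the conjecture, not proving it. The genuinely hard content --- establishing that the J-fraction of $r(x)/r(x^2)$ has $|\beta_k|=\frac{g_{k+1}g_{k-1}}{g_k^2}$, or any equivalent closed form --- is exactly what you defer to ``where I expect the technical work to concentrate.''

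Two further concrete problems. First, the dyadic recurrence you cite, $g_{2n+1}=g_n+1$, is false in general: $g_7=1$ (binary $111$ has one run) while $g_3+1=2$. The correct recurrences for \seqnum{A005811} split according to the parity of $n$ (e.g., $g_{2n+1}=g_n+1$ only for $n$ even, $g_{2n+1}=g_n$ for $n$ odd), and since you propose these recurrences as the mechanism by which the $\beta_k$ ``encode run counts,'' the engine of the intended induction is misstated. Second, the aeration route does not apply in the form you invoke it: the interleaving result used in the paper's Proposition~1 concerns the Hankel transform of a sequence whose odd-indexed terms vanish, whereas $r(x)/r(x^2)$ is a ratio whose expansion ($1,1,-1,1,-1,0,1,-1,\ldots$ or similar) is not aerated, so the Hankel determinants of $r(x)/r(x^2)$ do not decompose into those of two smaller sequences by that lemma. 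The conjecture remains open (unlike Conjectures 1--3, it is not among those proved by Allouche and Shallit in the cited follow-up), and your proposal should be presented as a research plan, not a proof.
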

The sequence $h_n$ of this conjecture begins
$$1, -2, -1, 2, -3, -2, -1, 2, -3, 4, 3, 2, -3, -2, -1, 2, -3, 4, 3,\ldots.$$
We shall denote this sequence, pre-pended with a $0$, by $\tilde{g}_n$. We then have the following conjecture.
\begin{conjecture}
$$ \tilde{g}_n + \tilde{s}_n = \sigma_n.$$
\end{conjecture}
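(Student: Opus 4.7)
The plan is to reduce the statement to a sign identity and then attack the sign question via the common aerated structure of the two generating functions involved. Assume the two preceding conjectures of this section: $\tilde s_n-\sigma_n=\sigma_n g_n$ (equivalent to $|\tilde s_n|=s_n=1+g_n$) and $|\tilde g_n|=g_n$. Then
\[\tilde g_n+\tilde s_n=\sigma_n\iff \tilde g_n=\sigma_n-\tilde s_n=-\sigma_n g_n,\]
so the entire content of the conjecture is the sign identity $\operatorname{sgn}(\tilde g_n)=-\sigma_n$ whenever $g_n\neq 0$.

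\emph{Common aerated structure.} The functional equation $r(x)-1=xr(x^2)$ gives
\[v(x)=\frac{r(x)}{r(x^2)}=x+\frac{1}{r(x^2)},\qquad f(x)=1-xr(x)=-x+f(x^2),\]
so both generating functions have the shape \emph{aerated series} $\pm x$. Consequently $V^{(n)}=U^{(n)}+(e_0e_1^{T}+e_1e_0^{T})$ and $F^{(n)}=\tilde F^{(n)}-(e_0e_1^{T}+e_1e_0^{T})$, where $U^{(n)}$ and $\tilde F^{(n)}$ are the Hankel matrices of the aerated series $1/r(x^2)$ and $f(x^2)$. Row-multilinearity expands each determinant into four terms; the two single-row cofactor terms vanish in both cases, because after deleting row $0$ and column $1$ from the Hankel matrix of an aerated sequence, the parity pattern $a_{i+j}=0$ for $i+j$ odd makes the remaining matrix block-diagonal with blocks of sizes $\lfloor n/2\rfloor\times(\lfloor n/2\rfloor+1)$ and $\lceil n/2\rceil\times(\lceil n/2\rceil-1)$, of combined rank at most $n-1$. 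The surviving terms yield, for $n\ge 1$,
\[\tilde g_{n+1}=\det U^{(n)}-\det\bigl(u_{i+j+4}\bigr)_{i,j\le n-2},\qquad \tilde s_n=\det\tilde F^{(n)}-\det\bigl(\tilde f_{i+j+4}\bigr)_{i,j\le n-2},\]
where $u_n=[x^n]\,1/r(x^2)$ and $\tilde f_n=[x^n]\,f(x^2)$.

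\emph{Explicit product formulas and sign matching.} Both right-hand sides are Hankel determinants of aerated sequences, so the aerated-Hankel formula used in the Proposition on $r_{n+1}$ factors them into products of Hankel and shifted-Hankel transforms of $1/r(x)$ (whose Hankel transform is $\tfrac{1}{1+x^2}$, computed in the paper) and of $r_n^{(-)}$ respectively. Multiplying out should produce closed product formulas for $\tilde g_{n+1}$ and $\tilde s_n$ sharing the same atomic factors, and in particular recover the side-conjectures $|\tilde g_n|=g_n$ and $|\tilde s_n|=s_n$. The sign identity $\operatorname{sgn}(\tilde g_n)=-\sigma_n$ then reduces to matching signs of the two product formulas, which one completes by induction on a $2$-adic recursion inherited from $r(x)=1+xr(x^2)$, together with the initial check at $n=0,1$.

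\emph{Main obstacle.} The sign match is the hard part. The signs $\sigma_n$ have no closed form and display genuinely irregular $2$-adic behaviour---the paper itself remarks that $\sigma_{b_n}$ resists description---so one cannot simply line up formulas term by term. In practice one must carry out a simultaneous induction on the triple $(\sigma_n,\tilde g_n,\tilde s_n)$, tracking the $\pm$-signs in the product formulas derived above; the reduction and the multilinear expansion themselves are rigorous and contain no speculation.
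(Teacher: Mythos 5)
First, be aware that the paper offers no proof of this statement: it is one of the paper's open conjectures, supported only by the empirical observation that in the coefficient array of the Hankel transform of $1-kxr(x)$ the row sums give $\tilde{s}_n$ while the two nonzero diagonals give $\sigma_n$ and $-\tilde{g}_n$, so that the identity amounts to saying each row sum is the sum of its two diagonal entries. There is therefore no proof to compare yours against; the question is whether your argument actually closes the gap, and it does not. The sound parts of your proposal are: the conditional reduction to the sign identity $\operatorname{sgn}(\tilde{g}_n)=-\sigma_n$ (conditional, as you note, on the two preceding conjectures $|\tilde{s}_n|=s_n$ and $|\tilde{g}_n|=g_n$, which are themselves unproven in the paper); the functional equations $xr(x)/(r(x)-1)=x+1/r(x^2)$ and $1-xr(x)=-x+\bigl(1-x^2r(x^2)\bigr)$; and the rank-two perturbation expansion of the Hankel determinants, including the parity/rank argument that kills the two cross terms. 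All of that is correct and is a genuine structural observation not present in the paper.

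The gap is that the step labelled ``explicit product formulas and sign matching'' is where the entire proof has to happen, and it is neither carried out nor supported by the tool you invoke. The interlacing formula $h_0,\,h_0^*h_0,\,h_0^*h_1,\,h_1^*h_1,\ldots$ expresses the Hankel transform of an aerated sequence in terms of $h_n$ and $h_n^*$ of the un-aerated one, but (i) it says nothing about the doubly shifted determinant $\det(u_{i+j+4})_{i,j\le n-2}$ of an aerated sequence, which is exactly what your multilinear expansion produces, and (ii) it degenerates here because the Hankel transform of $1/r(x)$ is $1,0,-1,0,1,\ldots$, vanishing at every odd index, so the continued-fraction product machinery behind the formula breaks down precisely where the signs are decided. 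Finally, the deferred ``simultaneous induction on a $2$-adic recursion'' is the whole content of the conjecture: $\sigma_n$ is governed by the parity of runs of $1$'s in the binary expansion of $n$ (compare the later conjecture $\sigma_n=(-1)^{\binom{n}{2}}/(1-2R_n)$), and you propose no mechanism for extracting that automatic-sequence structure from the determinant identities. As it stands, the proposal is an honest reduction plus a plan, not a proof.
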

We can visualise this as follows. Consider the sequence $r_n(k)$  that is the expansion of $1- kx r(x)$. The Hankel transform of the sequence $r_n(k)$ begins
$$1, - k^2 - k, 2k^3 + k^2, k^4 + k^3, - 2k^5 - k^4, 3k^6 + k^5, 2k^7 + k^6, k^8 + k^7, - 2k^9 - k^8,\ldots.$$
This polynomial sequence has a coefficient array that begins
$$\left(
\begin{array}{cccccccccc}
 1 & 0 & 0 & 0 & 0 & 0 & 0 & 0 & 0 & 0 \\
 0 & -1 & -1 & 0 & 0 & 0 & 0 & 0 & 0 & 0 \\
 0 & 0 & 1 & 2 & 0 & 0 & 0 & 0 & 0 & 0 \\
 0 & 0 & 0 & 1 & 1 & 0 & 0 & 0 & 0 & 0 \\
 0 & 0 & 0 & 0 & -1 & -2 & 0 & 0 & 0 & 0 \\
 0 & 0 & 0 & 0 & 0 & 1 & 3 & 0 & 0 & 0 \\
 0 & 0 & 0 & 0 & 0 & 0 & 1 & 2 & 0 & 0 \\
 0 & 0 & 0 & 0 & 0 & 0 & 0 & 1 & 1 & 0 \\
 0 & 0 & 0 & 0 & 0 & 0 & 0 & 0 & -1 & -2 \\
\end{array}
\right).$$ The row sums (corresponding to $k=1$) are then given by $\tilde{s}_n$, while the two diagonals are, respectively, $\sigma_n$ and $\tilde{g}_n$.

For the Catalan based analogous sequence that begins $1,-r,-1,-2,-5,-14,-42,\ldots$ we have the following result. For $r=0$, the Hankel transform begins
$$1, 1, 0, -4, -13, -29, -54, -90, -139, -203, -284, -384, -505, -649, \ldots.$$
This is the sequence $\frac{6-n+3n^2-2n^3}{6}$. Denoting this sequence by $a_n$, the Hankel transform for arbitrary $r$ is then given by
$$a_n -4 r \binom{n+1}{3} - r^2 \left(\binom{n+1}{3}+\binom{n+2}{3}\right).$$

In like fashion, the Hankel transform of the sequence $r_n^{(+)}(r)$
$$1, r, 1, 0, 1, 0, 0, 0, 1, 0, 0, 0, 0,\ldots,$$ which begins
$$1, 1 - r^2, - r^2, 0, 1, r^2 + 1, - r^2, 0, 1,\ldots,$$
is composed of the sum of the Hankel transform
$$1, 1, 0, 0, 1, 1, 0, 0, 1, 1, 2, -2, 1, 1, 0, 0, 1,\ldots$$ of the sequence defined by $r=0$ and
$r^2$ times the sequence that begins
$$0, -1, -1, 0, 0, 1, -1, 0, 0, 1, 1, 0, 0, 1, -1, 0, 0, 1, 1, 0, 0,\ldots.$$

\section{Related explorations}
The sequence $\tilde{s}_n$ arose from taking the Hankel transform of a modified version of the Rueppel sequence. We explore other Hankel transforms briefly, and compare these with the Catalan case.
\begin{example}
We consider the Catalan variant that begins
$$1,r,2,5,14,42,\ldots.$$
The Hankel transform of this parameterized sequence is given by
$$h_n=-\frac{4n^6+24n^5+55n^4-1149n^2-114n-180}{180}+r \frac{n(n-1)(4n^4+28n^3+113n+54)}{90}$$
$$ \quad\quad\quad -r^n \frac{n+1}{12}\binom{2n+4}{5},$$
with generating function
$$\frac{1-5x+2x^2-21x^3+12x^4-6x^5+x^6}{(1-x)^7}+r \frac{4x^2(5+2x+x^2)}{(1-x)^7}$$
$$ \quad\quad\quad - r^2\frac{x(1+x)(1+6x+x^2)}{(1-x)^7}.$$
For $r=-2,\ldots,2$, we get the following values.
$$
\begin{array}{ccccccccccc}
 1 & -2 & -101 & -695 & -2849 & -8798 & -22595 & -50903 & -103931 & -196514  & \ldots \\
 1 & 1 & -39 & -295 & -1239 & -3863 & -9967 & -22511 & -46031 & -87119  & \ldots \\
 1 & 2 & -5 & -63 & -289 & -930 & -2435 & -5543 & -11387 & -21614 &  \ldots \\
 1 & 1 & 1 & 1 & 1 & 1 & 1 & 1 & 1 & 1 &  \ldots\\
 1 & -2 & -21 & -103 & -369 & -1070 & -2659 & -5879 & -11867 & -22274 &  \ldots \\
\end{array}
$$
For this parameterization, we can see the that case of the Catalan numbers ($r=1$) is special. Taken modulo $2$, we obtain
$$
\begin{array}{ccccccccccc}
 1 & 0 & 1 & 1 & 1 & 0 & 1 & 1 & 1 & 0 & \ldots \\
 1 & 1 & 1 & 1 & 1 & 1 & 1 & 1 & 1 & 1 & \ldots \\
 1 & 0 & 1 & 1 & 1 & 0 & 1 & 1 & 1 & 0 & \ldots \\
 1 & 1 & 1 & 1 & 1 & 1 & 1 & 1 & 1 & 1 & \ldots \\
 1 & 0 & 1 & 1 & 1 & 0 & 1 & 1 & 1 & 0 & \ldots \\
\end{array}
$$
We now look at the analogous sequences with respect to the Rueppel sequence. Thus we take the sequences of the form
$$1, r, 0, 1, 0, 0, 0, 1, 0, 0, 0,\ldots.$$
We find that the Hankel transform is given by
$$1, - r^2, -1, 1, 1, - r^2, -1, 1, 1, - r^2, -1, 1, 1, - r^2, -1, 1,\ldots,$$
or
$$ \overline{\{1,0,-1,1\}}-r^2 \overline{\{0,1,0,0\}}$$ with generating function
$$\frac{1-r^2x-x^2+x^3}{1-x^4}.$$
For $r=-2,\ldots,2$ we get the values
$$
\begin{array}{cccccccccccc}
 1 & -4 & -1 & 1 & 1 & -4 & -1 & 1 & 1 & -4 & -1 & \ldots \\
 1 & -1 & -1 & 1 & 1 & -1 & -1 & 1 & 1 & -1 & -1 & \ldots\\
 1 & 0 & -1 & 1 & 1 & 0 & -1 & 1 & 1 & 0 & -1 & \ldots\\
 1 & -1 & -1 & 1 & 1 & -1 & -1 & 1 & 1 & -1 & -1 & \ldots\\
 1 & -4 & -1 & 1 & 1 & -4 & -1 & 1 & 1 & -4 & -1 & \ldots \\
\end{array}
$$
The case $r=1$ is that of the Rueppel sequence, with Hankel transform $(-1)^{\binom{n+1}{2}}$. We note that the sequence for $r=-1$ also has this Hankel transform. The Jacobi parameters in the case $r=-1$ are, respectively,
$$-1, 0, 2, -2, 0, 2, 0, -2, 0, 0, 2,\ldots,$$ and
$$-1,-1,-1,-1,-1,-1,-1,-1,\ldots.$$ For the case of general $r$, these parameters are, respectively,
$$r,-r-\frac{1}{r^2}, \frac{1}{r^2}-r,r-1,r+1,\frac{1}{r^2}-r,-r-\frac{1}{r^2},r-1,r+1,-r-\frac{1}{r^2},\ldots,$$ and
$$-r^2,-\frac{1}{r^4},-r^2,-1,-r^2,-\frac{1}{r^4},-r^2,-1,-r^2,-\frac{1}{r^2},-r^2,\ldots.$$
Again, taken modulo $2$, we obtain
$$
\begin{array}{ccccccccccc}
 1 & 0 & 1 & 1 & 1 & 0 & 1 & 1 & 1 & 0 & \ldots \\
 1 & 1 & 1 & 1 & 1 & 1 & 1 & 1 & 1 & 1 & \ldots \\
 1 & 0 & 1 & 1 & 1 & 0 & 1 & 1 & 1 & 0 & \ldots \\
 1 & 1 & 1 & 1 & 1 & 1 & 1 & 1 & 1 & 1 & \ldots \\
 1 & 0 & 1 & 1 & 1 & 0 & 1 & 1 & 1 & 0 & \ldots \\
\end{array}
$$
\end{example}

\section{Some notable triangles and related sequences}
Many number triangles of combinatorial significance arise from the Catalan numbers. Thus the triangles
$(1, xc(x))$, $(c(x), xc(x))$, $(1, xc(x)^2)$, $(c(x), xc(x)^2)$ and many more are important. Here, we have used Riordan array notation \cite{book, SGWW}. For two suitable power series, $g(x)$ and $f(x)$, the triangle defined by $g(x)$ and $f(x)$ has its $(n,k)$-th element given by
$$ [x^n] g(x)f(x)^k,$$ where the functional $[x^n]$ extracts the coefficient of $x^n$.
We are thus motivated to look at such number triangles as those defined by $(1, r(x))$, $(r(x),xr(x))$ and others. Often, interesting results arise by looking at these triangles modulo $2$, where they will be the same as their Catalan counter-parts.

The triangle defined by $(1, xr(x))$ begins
$$\left(
\begin{array}{ccccccc}
 1 & 0 & 0 & 0 & 0 & 0 & 0 \\
 0 & 1 & 0 & 0 & 0 & 0 & 0 \\
 0 & 1 & 1 & 0 & 0 & 0 & 0 \\
 0 & 0 & 2 & 1 & 0 & 0 & 0 \\
 0 & 1 & 1 & 3 & 1 & 0 & 0 \\
 0 & 0 & 2 & 3 & 4 & 1 & 0 \\
 0 & 0 & 2 & 4 & 6 & 5 & 1 \\
\end{array}
\right).$$
Its row sums will have generating function $\frac{1}{1-xr(x)}$. This expands to give the sequence \seqnum{A023359}, which begins
$$1, 1, 2, 3, 6, 10, 18, 31, 56, 98, 174, \ldots.$$
This counts the number of compositions of $n$ into powers of $2$. Taken modulo $2$, this sequence returns the Rueppel sequence. This corresponds to the defining identity for the Catalan numbers
$$c(x)=\frac{1}{1-x c(x)}.$$ The Hankel transform of \seqnum{A023359} is of interest. It begins
$$1, 1, 1, -1, -1, -1, 1, -1, -1, -1, -1, 1, -1, \ldots,$$ so it is a signed version of that of the Catalan numbers, as is expected. The pattern of signs is not clear. The Jacobi parameters are
$$1,0,0,0,\ldots$$ for the $\alpha$ coefficients and the $\beta$ coefficients begin
$$1, 1, -1, -1, 1, -1, 1, -1, 1, 1, -1, 1, -1, -1, 1, -1,\ldots.$$
Forming the sequence $\frac{\beta_n+1}{2}$ we obtain
$$1, 1, 0, 0, 1, 0, 1, 0, 1, 1, 0, 1, 0, 0, 1, 0, 1, 1, 0, \ldots,$$
which we conjecture coincides with \seqnum{A090678}, or the sequence \seqnum{ A088567} of non-squashing partitions of $n$ into distinct parts taken modulo $2$. The single $1$ in the $\alpha$ sequence indicates that this sequence is the INVERT transform of the aerated sequence given by
$$\cfrac{1}{1-\cfrac{\beta_1 x^2}{1-\cfrac{\beta_2 x^2}{1-\cdots}}}.$$ We can obtain the generating function of this sequence as follows.
$$\frac{\frac{1}{1- x r(x)}}{1+\frac{x}{1- x r(x)}}=\frac{1}{1+x(1-r(x))}.$$
The generating function $\frac{1}{1-x(r(x)-1)}=\frac{1}{1-xr(x^2)}$ then expands to give the sequence that begins
$$1, 0, 1, 0, 2, 0, 3, 0, 6, 0, 10, \ldots.$$
As it has the same $\beta$ sequence, its Hankel transform is equal to that of its un-aerated version.

The triangle defined by $(1,xr(x))$ taken modulo $2$ begins
$$\left(
\begin{array}{ccccccc}
 1 & 0 & 0 & 0 & 0 & 0 & 0 \\
 0 & 1 & 0 & 0 & 0 & 0 & 0 \\
 0 & 1 & 1 & 0 & 0 & 0 & 0 \\
 0 & 0 & 0 & 1 & 0 & 0 & 0 \\
 0 & 1 & 1 & 1 & 1 & 0 & 0 \\
 0 & 0 & 0 & 1 & 0 & 1 & 0 \\
 0 & 0 & 0 & 0 & 0 & 1 & 1 \\
\end{array}
\right).$$
Its row sums are then given by \seqnum{A080100}, which begins
$$1, 1, 2, 1, 4, 2, 2, 1, 8, 4, 4, 2, 4, 2, 2, 1,\ldots.$$ The $n$-th element of this sequence is equal to
$2^{\text{number of $0$'s in the binary representation of $n$}}$.

The inverse of the triangle defined by $(1, xr(x))$ begins
$$\left(
\begin{array}{ccccccc}
 1 & 0 & 0 & 0 & 0 & 0 & 0 \\
 0 & 1 & 0 & 0 & 0 & 0 & 0 \\
 0 & -1 & 1 & 0 & 0 & 0 & 0 \\
 0 & 2 & -2 & 1 & 0 & 0 & 0 \\
 0 & -6 & 5 & -3 & 1 & 0 & 0 \\
 0 & 20 & -16 & 9 & -4 & 1 & 0 \\
 0 & -70 & 56 & -31 & 14 & -5 & 1 \\
\end{array}
\right).$$
The sequence
$$0, 1, -1, 2, -6, 20, -70, 256, -970, 3772, -14960,\ldots$$ is the reversion of the sequence given by $xr(x)$. This is \seqnum{A092413}. Modulo $2$ this is the sequence $0,1,1,0,0,\ldots$ with generating function $x(1+x)$. This is to be expected as the corresponding Catalan case (the reversion of $xc(x)$) is given by $x(1-x)$. The row sums, which begin
$$1, 1, 0, 1, -3, 10, -35, 129, -492, 1921, -7641,\ldots$$ have residues modulo $2$ given by
$$1, 1, 0, 1, 1, 0, 1, 1, 0, 1, 1,\ldots.$$ Again, in the corresponding Catalan case we have row sums given by
$$1, 1, 0, -1, -1, 0, 1, 1, 0, -1, -1,\ldots$$ with the same residues.

Taken modulo $2$, the inverse matrix begins
$$\left(
\begin{array}{ccccccc}
 1 & 0 & 0 & 0 & 0 & 0 & 0 \\
 0 & 1 & 0 & 0 & 0 & 0 & 0 \\
 0 & 1 & 1 & 0 & 0 & 0 & 0 \\
 0 & 0 & 0 & 1 & 0 & 0 & 0 \\
 0 & 0 & 1 & 1 & 1 & 0 & 0 \\
 0 & 0 & 0 & 1 & 0 & 1 & 0 \\
 0 & 0 & 0 & 1 & 0 & 1 & 1 \\
\end{array}
\right).$$
Its row sums then begin
$$1, 1, 2, 1, 3, 2, 3, 1, 4, 3, 5,\ldots.$$ This is the Stern's diatomic series \seqnum{A002487}.

The triangle defined by $(r(x), xr(x))$ begins
$$\left(
\begin{array}{ccccccc}
 1 & 0 & 0 & 0 & 0 & 0 & 0 \\
 1 & 1 & 0 & 0 & 0 & 0 & 0 \\
 0 & 2 & 1 & 0 & 0 & 0 & 0 \\
 1 & 1 & 3 & 1 & 0 & 0 & 0 \\
 0 & 2 & 3 & 4 & 1 & 0 & 0 \\
 0 & 2 & 4 & 6 & 5 & 1 & 0 \\
 0 & 0 & 6 & 8 & 10 & 6 & 1 \\
\end{array}
\right).$$
Its row sums have generating function given by $\frac{r(x)}{1-xr(x)}$, which is the INVERT transform of $r(x)$. This sequence begins
$$1, 2, 3, 6, 10, 18, 31, 56, 98, 174, 306,\ldots.$$ As the INVERT transform of the Rueppel numbers, it has the same Hankel transform as that of the Rueppel sequence.

The row sums of this triangle modulo $2$ begin $1, 2, 1, 4, 2, 2, 1, 8, 4, 4,\ldots$, the once-shifted \seqnum{A080100}. Similarly, the inverse triangle, taken modulo $2$, will have row sums equal to the Stern sequence starting $1, 2, 1, 3, 2, 3, 1, \ldots$.

We next look at triangles defined by the reciprocal generating function $\frac{1}{r(x)}$. The triangle defined by $\left(1, \frac{x}{r(x)}\right)$ begins
$$\left(
\begin{array}{ccccccc}
 1 & 0 & 0 & 0 & 0 & 0 & 0 \\
 0 & 1 & 0 & 0 & 0 & 0 & 0 \\
 0 & -1 & 1 & 0 & 0 & 0 & 0 \\
 0 & 1 & -2 & 1 & 0 & 0 & 0 \\
 0 & -2 & 3 & -3 & 1 & 0 & 0 \\
 0 & 3 & -6 & 6 & -4 & 1 & 0 \\
 0 & -4 & 11 & -13 & 10 & -5 & 1 \\
\end{array}
\right).$$
It row sums will have generating function given by
$$\frac{1}{1-\frac{x}{r(x)}}=\frac{r(x)}{r(x)-x}.$$
The expansion of this generating function begins
$$1, 1, 0, 0, -1, 0, 0, 1, -1, 0, -1, 2, 0, 1, -3, 1,\ldots.$$
The Hankel transform of this sequence begins
$$1, -1, 1, 1, -1, 1, 1, 1, -1, 1, -1, -1, -1, 1, 1,\ldots.$$
We have the following conjecture.
\begin{conjecture}
The Hankel transform $h_n$ of the expansion of  $\frac{r(x)}{r(x)-x}$, where $r(x)$ is the generating function of the Rueppel numbers, satisfies
$$h_n=\sigma_n.$$
\end{conjecture}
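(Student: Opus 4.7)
The plan is to derive an explicit Jacobi continued fraction expansion for $f(x) = r(x)/(r(x)-x)$ whose $\beta$-sequence is $\pm 1$-valued, and then match the resulting sign pattern of $h_n = \prod_{k=1}^{n}\beta_k^{n-k+1}$ with the sign sequence $\sigma_n$. Since the conjectural values of $h_n$ already lie in $\{-1,+1\}$, the existence of such a $\pm 1$-valued $\beta$-sequence is essentially forced; the real content of the conjecture is that the resulting sign pattern is exactly $\sigma_n$.

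First I would exploit the self-similarity of $r$ encoded in $r(x) = 1 + xr(x^2)$. Applying this identity twice gives $r(x) - x = 1 + x^3 r(x^4)$, so
$$f(x) = \frac{r(x)}{r(x)-x} = \frac{1+xr(x^2)}{1+x^3 r(x^4)} = 1 + \frac{x}{1 + x^3 r(x^4)},$$
with an equivalent form $f(x) = 1/(1-x/r(x))$. This already reads off the leading Jacobi parameters $\alpha_0 = 1$, $\beta_1 = -1$, and ties the tail to a Rueppel object at scale $x^4$. Combining this with the Bacher--Cigler description of the Jacobi parameters of $r(x)$ (with $\alpha$-sequence $-\seqnum{A110036}(n+1)$ and $\beta$-sequence supported on $\{0,\pm 2, 1\}$) gives a recursive procedure for the parameters of $f$, from which a parallel computation should confirm they are $\pm 1$-valued: the $\pm 2$ entries of the $\beta$-sequence of $r$ must be cancelled by the algebraic passage to the quotient.

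With the $\beta$-sequence of $f$ in hand, matching the Hankel product to $\sigma_n$ proceeds by induction. The ratio $h_{n+1}/h_n$ is a subproduct of the $\beta_k$'s and therefore controls the sign flips in $h_n$, which must reproduce the flips in $\sigma_n$. Those flips are governed by the paper-folding relations $|\sigma_{n+1}-\sigma_n|/2 = p_n$ and $|\sigma_n+\sigma_{n+1}|/2 = \bar{p}_n$ recorded earlier in the paper, so one compares the positions where $\beta_n$ is negative with the positions where $p_{n-1}=1$, i.e., integers whose odd part has the form $4k+1$ (\seqnum{A091072}). The four cases $n \equiv 0,1,2,3 \pmod 4$ should then align with the dyadic recurrence $p_{2n+1} = p_n$, $p_{4n} = 0$, $p_{4n+2} = 1$.

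The main obstacle will be controlling signs as the self-similar recursion cascades through the scales $x, x^2, x^4, \ldots$. The $\beta$-sequence of $r$ itself is not $\pm 1$-valued, so one has to argue that the substitution $r \mapsto r/(r-x)$ cancels the $\pm 2$ entries in precisely the right way while creating the paper-folding-driven sign pattern of $\sigma$. The Bacher--Cigler framework for Hankel determinants of automatic sequences, together with their block-decomposition identities, should supply the machinery to track this; and the now-proved Allouche--Shallit results on $s_n$ referenced in Section 3 should give a clean base layer on which the dyadic induction can be carried out.
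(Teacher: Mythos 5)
The first thing to note is that the paper does not prove this statement: it appears only as a conjecture and is left open, so there is no proof of record to compare against. Your proposal must therefore stand on its own, and as written it is a research plan rather than a proof. The parts that check out are the identity $r(x)-x=1+x^3r(x^4)$ (obtained by applying $r(x)=1+xr(x^2)$ twice), the consequent rewriting $f(x)=\frac{r(x)}{r(x)-x}=1+\frac{x}{1+x^3r(x^4)}$, and the values $\alpha_0=1$, $\beta_1=-1$. Everything beyond that is deferred to computations that ``should'' succeed, which is exactly the part that would constitute a proof.

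Three concrete gaps. First, you have the Jacobi parameters of $r(x)$ interchanged: the $\beta$-sequence of the Rueppel sequence is the constant sequence $-1,-1,-1,\ldots$, and it is the $\alpha$-sequence that equals $-$\seqnum{A110036}$(n+1)$ with entries in $\{0,\pm2,1\}$. The ``cancellation of the $\pm2$ entries of the $\beta$-sequence'' that you present as the crux of the argument therefore concerns entries that do not exist, and the actual mechanism by which the $\beta$-sequence of $f$ becomes $\pm1$-valued is untouched. Second, the claim that a $\pm1$-valued $\beta$-sequence is ``essentially forced'' because the conjectural $h_n$ lie in $\{\pm1\}$ is circular: that $h_n=\pm1$ for all $n$ is precisely half of what must be proven. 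Moreover, $1+\frac{x}{1+x^3r(x^4)}$ is not a $J$-fraction step, and you give no transformation law converting the Jacobi parameters of $r$ at scale $x^4$ into those of $f$; that conversion is the entire technical content of the problem and is left blank. Third, $\sigma_n$ is defined as the sign of the Hankel transform of a different sequence (the expansion of $1-xr(x)$), and the paper-folding descriptions of its sign flips that you propose to match against, such as $\frac{|\sigma_{n+1}-\sigma_n|}{2}=p_n$, are themselves only conjectures in the paper. At best your plan would reduce this conjecture to several other open conjectures; it does not establish the statement.
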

Further insight into this conjecture can be obtained by parameterization. Thus we consider the expansion of $$\frac{r(x)}{f(x)-kx} = \left(1, \frac{x}{f(x)}\right)\cdot \frac{1}{1-kx}.$$ This expansion begins
$$1, k, -k + k^2, k - 2 k^2 + k^3, -2 k + 3 k^2 - 3 k^3 + k^4,
3 k - 6 k^2 + 6 k^3 - 4 k^4 + k^5,\ldots.$$ The Hankel transform of this polynomial sequence begins
$$1, -k, k^2, k^3, -k^4, k^5, k^6, k^7, -k^8, k^9, -k^10,\ldots.$$ In matrix form this is
$$\left(
\begin{array}{ccccccccccc}
 1 & 0 & 0 & 0 & 0 & 0 & 0 & 0 & 0 & 0 & 0 \\
 0 & -1 & 0 & 0 & 0 & 0 & 0 & 0 & 0 & 0 & 0 \\
 0 & 0 & 1 & 0 & 0 & 0 & 0 & 0 & 0 & 0 & 0 \\
 0 & 0 & 0 & 1 & 0 & 0 & 0 & 0 & 0 & 0 & 0 \\
 0 & 0 & 0 & 0 & -1 & 0 & 0 & 0 & 0 & 0 & 0 \\
 0 & 0 & 0 & 0 & 0 & 1 & 0 & 0 & 0 & 0 & 0 \\
 0 & 0 & 0 & 0 & 0 & 0 & 1 & 0 & 0 & 0 & 0 \\
 0 & 0 & 0 & 0 & 0 & 0 & 0 & 1 & 0 & 0 & 0 \\
 0 & 0 & 0 & 0 & 0 & 0 & 0 & 0 & -1 & 0 & 0 \\
 0 & 0 & 0 & 0 & 0 & 0 & 0 & 0 & 0 & 1 & 0 \\
 0 & 0 & 0 & 0 & 0 & 0 & 0 & 0 & 0 & 0 & -1 \\
\end{array}
\right)\cdot \left(\begin{array} {c} 1 \\ k \\ k^2\\ k^3 \\ k^4 \\k^5 \\ k^6 \\ k^7 \\ k^8 \\ k^9 \\ k^10 \end{array} \right).$$
The above matrix is then an involution (its square is the identity, or equivalently, it is self-inverse) with diagonal entries given by $\sigma_n$.

We note that in the analogous Catalan case, the corresponding sign sequence turns out to be $(-1)^n$ and the corresponding involution is then $(1, -x)$.

We note that the Hankel transform of the expansion of the reciprocal $\frac{f(x)-x}{f(x)}=1-\frac{x}{f(x)}$ begins
$$1, 0, 0, -1, -1, 0, 0, -1, -1, 0, 0, 1, -1, 0, 0, -1, -1, 0, 0,\ldots.$$

Taken modulo $2$, the triangle defined by $\left(1, \frac{x}{f(x)}\right)$ begins
$$\left(
\begin{array}{cccccccc}
 1 & 0 & 0 & 0 & 0 & 0 & 0 & 0 \\
 0 & 1 & 0 & 0 & 0 & 0 & 0 & 0 \\
 0 & 1 & 1 & 0 & 0 & 0 & 0 & 0 \\
 0 & 1 & 0 & 1 & 0 & 0 & 0 & 0 \\
 0 & 0 & 1 & 1 & 1 & 0 & 0 & 0 \\
 0 & 1 & 0 & 0 & 0 & 1 & 0 & 0 \\
 0 & 0 & 1 & 1 & 0 & 1 & 1 & 0 \\
 0 & 0 & 0 & 1 & 0 & 1 & 0 & 1 \\
\end{array}
\right).$$
Its row sums appear to be the sequence \seqnum{A214126}, which begins
$$1, 1, 2, 2, 3, 2, 4, 3, 5, 2, 5, 4, 6,\ldots.$$ This sequence $a_n$ is defined by
$$a_{2n}=a_{n-1}+a_n, a_{2n+1}=a_{n+1},$$ with $a_0=1, a_1=1$.

The inverse of this triangle begins
$$\left(
\begin{array}{ccccccc}
 1 & 0 & 0 & 0 & 0 & 0 & 0 \\
 0 & 1 & 0 & 0 & 0 & 0 & 0 \\
 0 & 1 & 1 & 0 & 0 & 0 & 0 \\
 0 & 1 & 2 & 1 & 0 & 0 & 0 \\
 0 & 2 & 3 & 3 & 1 & 0 & 0 \\
 0 & 5 & 6 & 6 & 4 & 1 & 0 \\
 0 & 11 & 15 & 13 & 10 & 5 & 1 \\
\end{array}
\right).$$
The sequence $0,1,1,1,2,5,11,\ldots$ is the expansion of the reversion of $\frac{x}{r(x)}$. It is documented in \seqnum{A134527}. David Scambler has noted there that it counts the  number of Dyck $n$-paths with all ascent lengths being $1$ less than a power of $2$. The residues of the sequence $1,1,1,2,5,11,\ldots$ modulo $2$ coincide with those of the ternary numbers $\frac{1}{n+1}\binom{3n}{n+1}$, given by \seqnum{A085357}. This corresponds to the fact that the reversion of $\frac{x}{c(x)}$ is the generating function of the ternary numbers (with a $0$ pre-pended).

We now look at the number triangle defined by $(1, x(1-xr(x))$. We recall that $1-xr(x)=1 - x - x^2 - x^4 - x^8 - x^{16} - \cdots$ is the generating function of $r_n^{(-1)}$. This triangle begins
$$\left(
\begin{array}{ccccccc}
 1 & 0 & 0 & 0 & 0 & 0 & 0 \\
 0 & 1 & 0 & 0 & 0 & 0 & 0 \\
 0 & -1 & 1 & 0 & 0 & 0 & 0 \\
 0 & -1 & -2 & 1 & 0 & 0 & 0 \\
 0 & 0 & -1 & -3 & 1 & 0 & 0 \\
 0 & -1 & 2 & 0 & -4 & 1 & 0 \\
 0 & 0 & -1 & 5 & 2 & -5 & 1 \\
\end{array}
\right).$$
Its row sums have generating function $\frac{1}{1-x-x^2r(x)}$, and begin
$$ 1, 1, 0, -2, -3, -2, 2, 7, 9, 2, -13, -26, -20, 13, 59,\ldots.$$
Their Hankel transform is given by $(-1)^n$, and the Jacobi parameters $\alpha_n$ and $\beta_n$ begin
$$1, 1, -2, 0, 0, 2, 0, -2, 0, 2,\ldots,$$ and
$$-1,-1,-1,-1,-1,\ldots,$$ respectively. We conjecture that the $\alpha$ sequence is a signed version of \seqnum{A110036}, where for $n >1$, we have $|\alpha_n|=2\cdot$\seqnum{A088567}$(n) \bmod 2$, where as we have seen
\seqnum{A088567} counts non-squashing partitions of $n$ into distinct parts.

The shifted sequence that begins
$$1, 0, -2, -3, -2, 2, 7, 9, 2, -13, -26, -20, 13, 59,\ldots,$$ which gives the row sum of the triangle defined by $(1-xr(x), x(1-xr(x)))$ has Hankel transform equal to $\tilde{s}_n$, namely
$$1, -2, 3, 2, -3, 4, 3, 2, -3, 4,\ldots.$$ This is because the generating function of the row sums of $(1-r(x), x(1-rx))$ is given by
$$\frac{1-xr(x)}{1-x(1-xr(x))},$$ which is the INVERT transform of $1-xr(x)$.

The row sums of this triangle taken modulo $2$ are \seqnum{A214126} beginning
$$1, 1, 2, 2, 3, 2, 4, 3, 5, 2, 5, 4, 6, 3, 7, 5, 8,\ldots.$$
The inverse of the triangle $(1, x(1-xr(x))$ begins
$$\left(
\begin{array}{ccccccc}
 1 & 0 & 0 & 0 & 0 & 0 & 0 \\
 0 & 1 & 0 & 0 & 0 & 0 & 0 \\
 0 & 1 & 1 & 0 & 0 & 0 & 0 \\
 0 & 3 & 2 & 1 & 0 & 0 & 0 \\
 0 & 10 & 7 & 3 & 1 & 0 & 0 \\
 0 & 39 & 26 & 12 & 4 & 1 & 0 \\
 0 & 161 & 107 & 49 & 18 & 5 & 1 \\
\end{array}
\right).$$
The sequence beginning
$$0, 1, 1, 3, 10, 39, 161, 698, 3126, 14361, 67287, 320319, 1544894,\ldots$$
is the expansion of the reversion of $x(1-xr(x))$. Taken modulo $2$, the sequence $1,1,3,10,\ldots$ gives \seqnum{A085357}, the residues modulo $2$ of the ternary numbers. Once again, we note that the reversion of $x(1-x c(x))=\frac{x}{c(x)}$ is the generating function of the ternary numbers (with $0$ prepended).

Taking the inverse of the triangle $(1, x(1-xr(x)))$ and then reducing this modulo $2$, gives us a row sum sequence that begins
$$1, 1, 2, 2, 3, 2, 5, 3, 4, 2, 6, 5, 8, 3, 7, 4, 5, 2, 7, 6, 10,\ldots.$$

We finish this section by looking at the number triangle defined by the Riordan array
$$(1-xr(x), xr(x)).$$
This triangle begins
$$\left(
\begin{array}{ccccccc}
 1 & 0 & 0 & 0 & 0 & 0 & 0 \\
 -1 & 1 & 0 & 0 & 0 & 0 & 0 \\
 -1 & 0 & 1 & 0 & 0 & 0 & 0 \\
 0 & -2 & 1 & 1 & 0 & 0 & 0 \\
 -1 & 0 & -2 & 2 & 1 & 0 & 0 \\
 0 & -2 & -1 & -1 & 3 & 1 & 0 \\
 0 & -2 & -2 & -2 & 1 & 4 & 1 \\
\end{array}
\right).$$
The sequence $0,1,0,-2,0,-2,-2,\ldots$ with generating function $xr(x)(1-xr(x))$ is a signed version of \seqnum{A151758}. The row sums of this triangle are $1,0,0,0,\ldots$ since the generating function of the row sums is given by $\frac{1-xr(x)}{1-xr(x)}=1$. The diagonal sums of this matrix begin
$$1, -1, 0, 0, -2, 1, -3, -1, -3, -5, -4, -11, -10, -20, -25,\ldots,$$ with Hankel transform that begins
$$1, -1, 2, 3, -5, 2, 3, 7, -10, 3, 1, -2, -3, -1, 4, 13, -17,\ldots.$$
Taken modulo $2$, this gives
$$1, 1, 0, 1, 1, 0, 1, 1, 0, 1, 1, 0, 1, 1, 0, 1, 1,0,\ldots.$$
The equivalent diagonal sum sequence for the Catalan analog begins
$$1, -1, 0, -2, -4, -13, -39, -125, -409, -1371, -4678, -16203,\ldots$$ with Hankel transform
$$1, -1, 0, 1, -1, 0, 1, -1, 0, 1, -1, 0, 1, -1, 0, 1, -1, 0,\ldots.$$
The inverse of this triangle begins
$$\left(
\begin{array}{ccccccc}
 1 & 0 & 0 & 0 & 0 & 0 & 0 \\
 1 & 1 & 0 & 0 & 0 & 0 & 0 \\
 1 & 0 & 1 & 0 & 0 & 0 & 0 \\
 1 & 2 & -1 & 1 & 0 & 0 & 0 \\
 1 & -4 & 4 & -2 & 1 & 0 & 0 \\
 1 & 16 & -12 & 7 & -3 & 1 & 0 \\
 1 & -54 & 44 & -24 & 11 & -4 & 1 \\
\end{array}
\right).$$
Modulo $2$, we obtain the triangle that begins
$$\left(
\begin{array}{ccccccccccc}
 1 & 0 & 0 & 0 & 0 & 0 & 0 & 0 & 0 & 0 & 0 \\
 1 & 1 & 0 & 0 & 0 & 0 & 0 & 0 & 0 & 0 & 0 \\
 1 & 0 & 1 & 0 & 0 & 0 & 0 & 0 & 0 & 0 & 0 \\
 1 & 0 & 1 & 1 & 0 & 0 & 0 & 0 & 0 & 0 & 0 \\
 1 & 0 & 0 & 0 & 1 & 0 & 0 & 0 & 0 & 0 & 0 \\
 1 & 0 & 0 & 1 & 1 & 1 & 0 & 0 & 0 & 0 & 0 \\
 1 & 0 & 0 & 0 & 1 & 0 & 1 & 0 & 0 & 0 & 0 \\
 1 & 0 & 0 & 0 & 1 & 0 & 1 & 1 & 0 & 0 & 0 \\
 1 & 0 & 0 & 0 & 0 & 0 & 0 & 0 & 1 & 0 & 0 \\
 1 & 0 & 0 & 0 & 0 & 1 & 0 & 1 & 1 & 1 & 0 \\
 1 & 0 & 0 & 0 & 0 & 0 & 1 & 0 & 1 & 0 & 1 \\
\end{array}
\right).$$
The row sums of this triangle begin
$$1, 2, 2, 3, 2, 4, 3, 4, 2, 5, 4, 6, 3, 6, 4, 5,\ldots.$$
We recall that Stern's diatomic sequence $st_n$ \seqnum{A002487} begins
$$0, 1, 1, 2, 1, 3, 2, 3, 1, 4, 3, 5, 2, 5, 3, 4, 1,\ldots.$$
The general term $st_n$ of this sequence can be defined as
$$st_n=\sum_{k=0}^{n-1} \binom{k}{n-k-1} \bmod 2.$$
We define a sequence $a_n$ as follows. First, $a_0=1$. Then
$$a_{2n}=a_n, \quad a_{2n+1}=a_{n-1}+st_{n+1}.$$
\begin{conjecture}
The row sums of the matrix
$$(1-xr(x), xr(x))^{-1} \bmod 2$$ are given by $a_n$ defined above.
\end{conjecture}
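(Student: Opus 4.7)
The plan is to combine a Riordan-array inversion with a Lucas-theorem parity calculation, then to identify the resulting column sums via the formula for $st_n$ recalled just before the statement. As a first step I would compute the inverse exactly. With $f(x)=xr(x)$ and $\bar f$ its compositional inverse, the Riordan inversion formula gives
$$(1-xr(x), xr(x))^{-1} = \left(\frac{1}{(1-xr(x))\circ \bar f(x)},\ \bar f(x)\right),$$
and since $\bar f(x)\,r(\bar f(x)) = f(\bar f(x)) = x$, the first component collapses to $\frac{1}{1-x}$. Thus $(1-xr(x), xr(x))^{-1} = \bigl(\frac{1}{1-x},\ \bar f(x)\bigr)$ over $\mathbb{Z}$.

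The second step is to pin down $\bar f(x)$ modulo $2$. Using $r(x)-1 = xr(x^2)$ together with the Frobenius identity $r(x)^2\equiv r(x^2)\pmod 2$, one has $xr(x)^2 \equiv r(x)-1 \pmod 2$. A direct check using $(x+x^2)^{2^k} = x^{2^k}+x^{2^{k+1}}$ makes the telescope $y+y^2+y^4+\cdots \equiv x \pmod 2$ hold for $y=x+x^2$, which is exactly the reversion identity $y\,r(y)\equiv x\pmod 2$. Hence $\bar f(x) \equiv x+x^2 \pmod 2$, so modulo $2$ the matrix of the conjecture equals the $\mathbb{F}_2$-Riordan array $\bigl(\frac{1}{1-x},\ x+x^2\bigr)$, whose $(n,k)$ entry is
$$T_{n,k} = [x^n]\frac{(x+x^2)^k}{1-x} = [x^{n-k}]\frac{(1+x)^k}{1-x} \equiv \sum_{j=0}^{n-k}\binom{k}{j} \pmod 2.$$

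The third step is to split by parity using Lucas' theorem in the form $\binom{2a+\epsilon}{2b+\delta}\equiv\binom{a}{b}\binom{\epsilon}{\delta}\pmod 2$. This gives $T_{2n,2k}\equiv T_{n,k}$, $T_{2n,2k+1}\equiv 0$, $T_{2n+1,2k}\equiv T_{n,k}$, and $T_{2n+1,2k+1}\equiv\binom{k}{n-k}\pmod 2$. Summing over $k$ then yields $R_0=1$, $R_{2n}=R_n$, and $R_{2n+1} = R_n + \sum_{k=0}^{n}\bigl(\binom{k}{n-k}\bmod 2\bigr)$ for the integer row sums $R_n$ of the mod-$2$ matrix, and the last sum is precisely $st_{n+1}$ by the formula for $st_n$ recalled just before the conjecture. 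This matches the recursion defining $a_n$ (reading the second relation as $a_{2n+1}=a_n+st_{n+1}$, which is what the numerical data $1,2,2,3,2,4,\ldots$ demands). The main obstacle is Step 2, namely the mod-$2$ identification of $\bar f(x)$ as $x+x^2$; once this is in place, the Lucas bookkeeping in Step 3 is straightforward.
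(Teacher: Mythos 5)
The paper offers no proof of this statement---it is left as a conjecture---so there is nothing to compare your argument against; you have in fact supplied what the paper lacks, and your argument is correct. The three steps all check out: the inversion formula for $(1-f,f)$ with $f=xr(x)$ does give first component $1/(1-f(\bar f(x)))=1/(1-x)$; the telescoping $\sum_{k\ge 0}(x+x^2)^{2^k}\equiv\sum_{k\ge0}(x^{2^k}+x^{2^{k+1}})\equiv x\pmod 2$ does identify $\bar f\equiv x+x^2$ (and since the array is unipotent lower triangular, reducing the integer inverse mod $2$ agrees with inverting over $\mathbb{F}_2$); and the entry formula $T_{n,k}\equiv\sum_{j=0}^{n-k}\binom{k}{j}$ reproduces the displayed matrix, with the Lucas split giving $R_{2n}=R_n$ and $R_{2n+1}=R_n+\sum_{k=0}^{n}\bigl(\binom{k}{n-k}\bmod 2\bigr)=R_n+st_{n+1}$. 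One point you should state more forcefully rather than parenthetically: what you prove is $a_{2n+1}=a_n+st_{n+1}$, whereas the paper's definition literally reads $a_{2n+1}=a_{n-1}+st_{n+1}$, which is undefined at $n=0$ and disagrees with the paper's own row-sum data already at $a_7$ (it would give $a_3+st_4=3+1=4$ only under your reading; the literal reading gives $a_2+st_4=3$ against the displayed value $4$). So you have proved a corrected form of the conjecture, and the paper's recursion contains an indexing error; making that explicit would turn your last sentence from an aside into the honest statement of what has been established. The mention of the Frobenius identity $r(x)^2\equiv r(x^2)$ in Step 2 is harmless but unnecessary, since the direct verification $f(x+x^2)\equiv x$ is all you use.
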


\section{The sequence $\sigma_n$}
We recall that the sequence $\sigma_n$ has been defined as
$$\sigma_n= \frac{|\tilde{s}_n|}{\tilde{s}_n},$$ where the sequence $s_n$ is the Hankel transform of the expansion of $1-xr(x)$.

The Thue-Morse sequence $tm_n$ is \seqnum{A010060} which begins
$$	0, 1, 1, 0, 1, 0, 0, 1, 1, 0, 0, 1, 0, 1, 1, 0, 1, 0,\ldots,$$ is a classical sequence, whose doubled sequence \seqnum{A095190} begins
$$0, 0, 1, 1, 1, 1, 0, 0, 1, 1, 0, 0, 0, 0, 1, 1, 1, 1, 0, 0, 0, 0, 1, 1, 0, \ldots.$$
The Thue-Morse sequence can be calculated as
$$tm_n=1-\text{Mod}\left(\text{Mod}\left(\sum_{k=0}^n \text{Mod}\left(\binom{n}{k},2\right),3\right),2\right).$$
Thus the doubled sequence has general term
$$tm_{\lfloor \frac{n}{2} \rfloor}=1-\text{Mod}\left(\text{Mod}\left(\sum_{k=0}^{\lfloor \frac{n}{2} \rfloor} \text{Mod}\left(\binom{\lfloor \frac{n}{2} \rfloor}{k},2\right),3\right),2\right).$$
The Golay-Rudin-Shapiro sequence \seqnum{A020985} begins
$$	1, 1, 1, -1, 1, 1, -1, 1, 1, 1, 1, -1, -1, -1, 1, -1, 1, 1, 1,\ldots.$$ This sequence has links to paper-folding \cite{Mendes}. It can be defined as the sequence $a_n$ such that $a_0=1$, and
$$a_{2n}=a_n,\quad  a_{2n+1} = (-1)^n a_n .$$
Alternatively, its $n$-th term is given by
$$(-1)^{\text{number of occurrences of $11$ in the binary expansion of $n$}}.$$
We calculate $2 a_n-\sigma_n.$ This begins
$$0, 0, 1, 1, -1, 1, 0, 0, -1, 1, 0, 0, 0, 0, 1, 1, -1, 1, 0, 0, 0, 0, -1, -1, 0, 0,\ldots.$$
This leads to the following conjecture.
\begin{conjecture}
Let $a_n$ be the Golay-Rudin-Shapiro sequence, and let $tm_n$ be the Thue-Morse sequence. Then we have
$$ | 2 a_n-\sigma_n|=tm_{\lfloor \frac{n}{2} \rfloor}.$$
\end{conjecture}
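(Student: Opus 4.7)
The plan is to induct on $n$, exploiting simultaneously the three self-similar recursions at play. The doubled Thue--Morse sequence $tm_{\lfloor n/2\rfloor}$ is constant on each pair $(2k,2k+1)$ with common value $tm_k$; the Golay--Rudin--Shapiro sequence satisfies $a_{2k}=a_k$, $a_{2k+1}=(-1)^k a_k$; and the Thue--Morse sequence satisfies $tm_{2k}=tm_k$, $tm_{2k+1}=1-tm_k$. So the strategy is to split the claim into a statement at $n=2k$ and a statement at $n=2k+1$, reduce each to an identity in the variables $a_k$, $\sigma_k$, $tm_k$ (together with the parity of $k$), and close the loop with the inductive hypothesis.

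Before this recursion can be executed, one needs an analogous $2$-block description of $\sigma_n$, and this is where the main difficulty lies. The cleanest approach is to first prove the earlier conjectures of the paper which characterise sign-changes of $\sigma_n$ by the paper-folding sequence, namely $|\sigma_{n+1}-\sigma_n|/2 = p_n$ and $|\sigma_n+\sigma_{n+1}|/2=\bar p_n$. Combined with the $2$-automatic recursion $\bar p_{4n}=0$, $\bar p_{4n+2}=1$, $\bar p_{2n+1}=\bar p_n$, this pins down $\sigma_{2k}$ and $\sigma_{2k+1}$ in terms of $\sigma_k$ and the parity of $k$, producing the required self-similarity. In lieu of a direct proof, one could instead derive the recursion for $\sigma_n$ from the Jacobi continued fraction expansion of $1-xr(x)$ together with the product formula $h_n=\prod_{k=0}^{n}\beta_k^{n-k}$ for Hankel determinants; since the $\beta$-sequence for this expansion is itself $2$-automatic, the induced recursion on the sign of $h_n$ should fall out of the same automaton.

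Granting a recursion of the form $\sigma_{2k}=\eta_k\sigma_k$, $\sigma_{2k+1}=\eta'_k\sigma_k$ with $\eta_k,\eta'_k\in\{\pm1\}$ prescribed by the paper-folding pattern, the inductive step reduces to a finite case analysis on the parity of $k$ and on $tm_k$. Writing $2a_{2k}-\sigma_{2k}=2a_k-\eta_k\sigma_k$ and $2a_{2k+1}-\sigma_{2k+1}=2(-1)^k a_k-\eta'_k\sigma_k$, one checks that in every case the common absolute value collapses to $|2a_k-\sigma_k|$ or to $1-|2a_k-\sigma_k|$, matching $tm_{2k}=tm_k$ and $tm_{2k+1}=1-tm_k$ respectively and so to $tm_k$ by the inductive hypothesis. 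The base cases $n=0,1$ are direct.

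The main obstacle is thus the structural description of $\sigma_n$; once a $2$-automatic recursion for $\sigma_n$ is secured, the remainder is bookkeeping, because $a_n$ and $tm_n$ are already $2$-automatic in explicit ways and identities among $2$-automatic sequences reduce to a finite check on the product automaton. In particular, if $\sigma_n$ can be realised as the output of an automaton reading the binary expansion of $n$, then the conjecture becomes a finite-state verification rather than an open-ended combinatorial problem.
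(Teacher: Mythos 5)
The paper offers no proof of this statement: it is presented as one of the paper's open conjectures (and it is not among the three that the paper reports as having been proved by Allouche and Shallit, which concern $s_n$, not $\sigma_n$). So there is nothing in the paper to compare your argument against; it must stand on its own, and as written it does not. Your plan is conditional at its central point. The $2$-block recursion for $\sigma_n$ that drives the induction is exactly the open content of the problem: you propose to obtain it either from the paper's earlier conjectures $|\sigma_{n+1}-\sigma_n|/2=p_n$ and $|\sigma_n+\sigma_{n+1}|/2=\bar p_n$, which are themselves unproven conjectures about the same sequence, or from the $\beta$-sequence of the Jacobi continued fraction of $1-xr(x)$, for which the paper gives no closed form and whose $2$-automaticity you assert (``should fall out of the same automaton'') rather than establish. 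Recall that $\sigma_n$ is defined as the sign of a Hankel determinant; nothing in the paper gives it a proven automatic description, and reducing one conjecture to other conjectures in the same paper is not a proof.

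There is also a concrete arithmetic problem in the inductive step you sketch. With the Golay--Rudin--Shapiro sequence $a_n\in\{\pm1\}$ (as in \seqnum{A020985}) and $\sigma_n\in\{\pm1\}$, the quantity $|2a_n-\sigma_n|$ takes values in $\{1,3\}$ and can never equal a Thue--Morse value in $\{0,1\}$; likewise your claimed collapse to $1-|2a_k-\sigma_k|$ would produce $-2$, which is not an absolute value. The paper's own displayed data for $2a_n-\sigma_n$ (beginning $0,0,1,1,-1,1,0,0,\ldots$) shows that some unstated normalization of $a_n$ is in force, and the identity must be re-interpreted before any case analysis can be carried out. Since you never fix the normalization and never actually perform the finite check, the ``bookkeeping'' half of the argument is also incomplete. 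The underlying idea --- realise $\sigma_n$ as a $2$-automatic sequence and then verify the identity on a product automaton --- is the right strategic frame (and is how Allouche and Shallit handled the paper's other conjectures), but the decisive step, a proven automaton for $\sigma_n$, is missing.
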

The sequence \seqnum{A268411}, the parity of the number of runs of $1$'s in the binary representation of $n$, begins $$0, 1, 1, 1, 1, 0, 1, 1, 1, 0, 0, 0, 1, 0, 1, 1, 1, 0, 0, 0, 0, 1,\ldots.$$
It can be defined as follow, where we denote this sequence by $b_n$.
We have $b_0=0, b_{2n}=b_n$, and for odd $n$, $b_{2n+1}=b_n$, while for even $n$, $b_{2n+1}=1-b_n$.
We then have the following conjecture.
\begin{conjecture} Let $b_n$ be the sequence \seqnum{A268411} that gives the parity of the number of runs of $1$'s in the binary representation of $n$. Then we have
$$|\sigma_n - (-1)^{\binom{n}{2}}|=2b_n.$$
\end{conjecture}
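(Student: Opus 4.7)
The conjecture is equivalent to the signed identity $\sigma_n=(-1)^{\binom{n}{2}+b_n}$, since $\sigma_n,(-1)^{\binom{n}{2}}\in\{\pm1\}$ and $b_n\in\{0,1\}$, so the absolute difference is forced to be either $0$ or $2$. The plan is a two-step reduction: first use the earlier first-difference conjecture to pin down $\sigma_n$ up to a paper-folding partial sum, then verify a parity identity by induction on the $2$-adic structure of $n$.

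For the first step, Conjecture~7 ($|\sigma_{n+1}-\sigma_n|/2=p_n$) combined with $\sigma_n\in\{\pm1\}$ forces $\sigma_{n+1}=(-1)^{p_n}\sigma_n$; telescoping from $\sigma_0=1$ gives
$$\sigma_n=(-1)^{P_n},\qquad P_n:=\sum_{k=0}^{n-1}p_k.$$
The target conjecture therefore reduces to the parity identity
$$P_n\equiv\binom{n}{2}+b_n\pmod 2. \qquad(\ast)$$

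For the second step, I would prove $(\ast)$ by strong induction on $n$ with a case split modulo $4$. Using the paper-folding recursions $p_{2j+1}=p_j$, $p_{4j}=1$, $p_{4j+2}=0$ quoted in Section~2, one obtains $P_{4m}=m+P_{2m}$, $P_{4m+1}=m+1+P_{2m}$, $P_{4m+2}=m+1+P_{2m+1}$, and $P_{4m+3}=m+1+P_{2m+1}$. On the other side, $\binom{n}{2}\bmod 2$ is four-periodic with pattern $0,0,1,1$, and the excerpt's recursion for $b_n$ unpacks into $b_{2n}=b_n$, $b_{4n+1}=1-b_n$, and $b_{4n+3}=b_{2n+1}$. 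Substituting the inductive hypothesis $P_m\equiv\binom{m}{2}+b_m\pmod 2$ for $m<n$, one checks each of the four residue classes; for instance, when $n=4m$ the hypothesis gives $P_{2m}\equiv m+b_m\pmod 2$, hence $P_{4m}\equiv 2m+b_m\equiv b_m\equiv\binom{4m}{2}+b_{4m}\pmod 2$, and the remaining three cases are essentially identical in flavour.

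The main obstacle is that the reduction in the first step rests on the still-unproven Conjecture~7. Making the argument unconditional requires establishing $|\sigma_{n+1}-\sigma_n|=2p_n$ independently, and the natural route is through the Jacobi continued fraction of $1-xr(x)$: if $\epsilon_k:=\mathrm{sign}(\beta_k)$, the product formula $\tilde s_n=\prod_{k\le n}\beta_k^{n-k}$ gives $\sigma_n/\sigma_{n-1}=\prod_{k<n}\epsilon_k$, turning the sign-jump identity into a description of the positions $k$ at which $\beta_k<0$. Writing down this $\beta$-sequence explicitly, presumably by adapting the Bacher--Cigler analysis of $r(x)$ itself via the self-similarity $r(x)-1=xr(x^2)$ inherited by $1-xr(x)$, is the principal hurdle; once it is in hand, the two-step argument above becomes unconditional.
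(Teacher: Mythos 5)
The paper offers no proof of this statement: it is presented, and remains, as a conjecture in an explicitly empirical study, so there is no argument of the author's to compare yours against. Judged on its own terms, your proposal is a correct and cleanly executed \emph{reduction}, not a proof. The opening equivalence $\sigma_n=(-1)^{\binom{n}{2}+b_n}$ is right, and the unconditional half of your argument checks out completely: the recursions $P_{4m}=m+P_{2m}$, $P_{4m+1}=m+1+P_{2m}$, $P_{4m+2}=P_{4m+3}=m+1+P_{2m+1}$ do follow from $p_{4j}=1$, $p_{4j+2}=0$, $p_{2j+1}=p_j$; the pattern $\binom{n}{2}\equiv 0,0,1,1\pmod 2$ and the unpacked rules $b_{2n}=b_n$, $b_{4n+1}=1-b_n$, $b_{4n+3}=b_{2n+1}$ are correct; and the induction closes in all four residue classes (I verified the three cases you omitted as "identical in flavour"). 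This establishes something the paper does not make explicit, namely that the present conjecture is a logical consequence of the earlier conjecture $|\sigma_{n+1}-\sigma_n|/2=p_n$ (equivalently, of the paper's later "refined" form $\sigma_n=(-1)^{\binom{n}{2}}/(1-2R_n)$, which is in fact the same statement as this one).

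The gap is that the input you telescope from is itself one of the paper's unproven conjectures, so your argument is conditional, and the route you sketch for discharging the condition is a plan rather than a proof. Determining the $\beta$-sequence of the Jacobi continued fraction of $1-xr(x)$ and showing that its sign pattern reproduces the paper-folding sequence is precisely where the difficulty of this whole circle of conjectures is concentrated: it is also what would be needed to prove that the Hankel determinants $\tilde{s}_n$ are nonzero at all, without which $\sigma_n=h_n/|h_n|$ is not even well defined. Note too that the Bacher--Cigler analysis you propose to adapt exploits the self-similarity $r(x)-1=xr(x^2)$ of $r(x)$ itself; the series $1-xr(x)$ is neither an aeration nor a simple shift of $r(x)$, so that analysis does not transfer directly, and your phrase "presumably by adapting" conceals the entire remaining difficulty. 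As it stands you have proved an implication between two open conjectures of the paper --- a worthwhile observation --- but the statement itself remains open.
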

We have that
the sequence $\frac{\sigma_n-(-1)^{\binom{n}{2}}}{2}$ begins
$$0, -1, 1, 1, -1, 0, 1, 1, -1, 0, 0, 0, -1, 0, 1, 1, -1, 0, 0, 0, 0,
-1, 0,\ldots,$$ and we thus conjecture that this is a signed version of \seqnum{A268411}.
In fact, we can refine the above conjecture to the following.
\begin{conjecture}
Let $R_n$ \cite{Sh} be the sequence \seqnum{A268411} that gives the parity of the number of runs of $1$'s in the binary representation of $n$. Then we have
$$\sigma_n=\frac{(-1)^{\binom{n}{2}}}{1-2 R_n}.$$
\end{conjecture}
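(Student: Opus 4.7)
The plan is to first observe that this conjecture is logically equivalent to the preceding one. Since $R_n\in\{0,1\}$, we have $(1-2R_n)^{-1}=1-2R_n$, so $\sigma_n=(-1)^{\binom{n}{2}}/(1-2R_n)$ unpacks to $\sigma_n=(-1)^{\binom{n}{2}+R_n}$; this is precisely the $\pm 1$ content of the statement $|\sigma_n-(-1)^{\binom{n}{2}}|=2R_n$ from the conjecture just above. The real task, therefore, is to pin down the sign of $\tilde s_n$ in a form that exposes the parity of $\binom{n}{2}+R_n$.

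The natural machinery is the Jacobi continued fraction expansion of the generating function $1-xr(x)$ of $r_n^{(-)}$. By the Hankel formula recalled in the introduction, if $(\beta_k)_{k\ge 0}$ is the $\beta$-sequence of $1-xr(x)$ then
\[
\tilde s_n=\prod_{k=0}^n \beta_k^{\,n-k},\qquad \sigma_n=(-1)^{\sum_{k:\,\beta_k<0}(n-k)}.
\]
Step~1 is therefore to produce the $\beta$-sequence of $1-xr(x)$. The paper already gives the analogous $\alpha$- and $\beta$-sequences for $r(x)$ itself (with $\alpha$-sequence $-\seqnum{A110036}(n+1)$), and one can transfer these to $1-xr(x)$ either by the standard transformation rules governing $g(x)\mapsto 1-xg(x)$ or, more combinatorially, by exploiting the self-similarity $r(x)=1+xr(x^2)$ that was already used in Proposition~1. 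Step~2 is then to read off the signs of the $\beta_k$ and simplify $\sum_{k:\,\beta_k<0}(n-k)\bmod 2$ into a function of the binary expansion of $n$.

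The final step, and the main obstacle, is to establish
\[
\sum_{k:\,\beta_k<0}(n-k)\equiv \binom{n}{2}+R_n\pmod 2.
\]
The right-hand side satisfies the four-fold recursion $R_{2n}=R_n$, $R_{4n+1}=1-R_{2n}$, $R_{4n+3}=R_{2n+1}$, together with the period-$4$ behaviour of $\binom{n}{2}\bmod 2$. I would match this with a four-fold recursion for $\sigma_n$ inherited from the aeration identity $r(x)=1+xr(x^2)$, following the splitting technique of Proposition~1. The delicate part is obtaining the sign of $\beta_k$ in a sufficiently explicit, binary-digit-friendly form; once that is in hand, the parity identity becomes a bookkeeping exercise in the spirit of the Allouche--Shallit argument \cite{Conjectures} that settled the three conjectures of Section~3. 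Base cases can be checked against the numerical values of $\sigma_n$ tabulated in Section~5.
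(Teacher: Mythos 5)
A point of context first: the statement you are proving is presented in the paper as a \emph{conjecture}, with no proof supplied, so there is no argument of the author's to compare yours against. Your opening reduction is correct and worth keeping: since $R_n\in\{0,1\}$ and $\sigma_n\in\{\pm 1\}$, the displayed formula is equivalent to $\sigma_n=(-1)^{\binom{n}{2}+R_n}$, which is exactly the content of the immediately preceding conjecture $|\sigma_n-(-1)^{\binom{n}{2}}|=2R_n$. But that conjecture is likewise unproven in the paper, so this step transfers the burden rather than discharging it.

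The substantive gap is in your Steps 1--2 and the ``final step.'' The identity $\sigma_n=(-1)^{\sum_{k:\,\beta_k<0}(n-k)}$ is essentially a tautology: from $h_n=\prod_k \beta_k^{\,n-k}$ one gets that $\operatorname{sign}(\beta_k)$ is (up to indexing) $\sigma_n\sigma_{n-2}$, since $\sigma_{n-1}^2=1$. So ``reading off the signs of the $\beta_k$'' is the same problem as determining $\sigma_n$; nothing is gained unless the J-fraction of $1-xr(x)$ is computed \emph{independently}. There is no clean standard transformation rule for the effect of $g\mapsto 1-xg$ on Jacobi parameters (this is not one of the invariance transformations of J-fractions), and the paper only records the Jacobi parameters of $r(x)$ itself, citing Bacher and Cigler; it conjectures rather than derives the Hankel transform of $1-xr(x)$. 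Hence the ``delicate part'' you defer --- an explicit, binary-digit-friendly formula for $\operatorname{sign}(\beta_k)$ --- is not a finishing touch but the entire content of the conjecture, and the concluding parity identity $\sum_{k:\,\beta_k<0}(n-k)\equiv\binom{n}{2}+R_n \pmod 2$ is asserted, not established. Until that central computation is supplied (for instance by a Bacher/Cigler-style derivation of the continued fraction, or by the transducer methods Allouche and Shallit used for the Section 3 conjectures), what you have is a plausible research plan consistent with the numerical evidence, not a proof.
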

In fact, we have another conjectured formula for $\sigma_n$ in terms of $R_n$. 
\begin{conjecture}
We have 
$$\sigma_n=\left(1-2R_{\frac{n}{2}}\right)(-1)^{\frac{n}{2}}\frac{1+(-1)^n}{2}-\left(1-2R_{\frac{n-1}{2}}\right)\frac{1-(-1)^n}{2}.$$ \end{conjecture}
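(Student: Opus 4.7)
The strategy is to reduce the present conjecture to the immediately preceding one, which asserts $\sigma_n = (-1)^{\binom{n}{2}}/(1-2R_n)$. Since $R_n\in\{0,1\}$ the factor $1-2R_n$ is $\pm 1$ and equals its own reciprocal, so that identity is equivalent to $\sigma_n = (-1)^{\binom{n}{2}}(1-2R_n)$. Granting this, what remains is a purely arithmetic manipulation that separates the $(-1)^{\binom{n}{2}}$ factor according to the parity of $n$, using the standard self-similar recursion for $R_n$.

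First I would record the recursion. Writing $n$ in binary, appending a $0$ leaves the number of maximal runs of $1$'s unchanged, giving $R_{2m}=R_m$. Appending a $1$ extends the final run when $m$ is odd and creates a new run when $m$ is even, giving $R_{2m+1}=R_m$ for odd $m$ and $R_{2m+1}=1-R_m$ for even $m$.

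Next I would split on the parity of $n$. For $n=2m$, the quantity $\binom{2m}{2}=m(2m-1)$ has parity equal to that of $m$, so $(-1)^{\binom{2m}{2}}=(-1)^m$; combined with $R_{2m}=R_m$ the preceding conjecture yields $\sigma_{2m}=(-1)^m(1-2R_m)$, which is exactly the first summand in the claimed formula. For $n=2m+1$, $\binom{2m+1}{2}=m(2m+1)$ is again of the parity of $m$. If $m$ is even, then $1-2R_{2m+1}=1-2(1-R_m)=-(1-2R_m)$ and $(-1)^m=1$, giving $\sigma_{2m+1}=-(1-2R_m)$; if $m$ is odd, then $1-2R_{2m+1}=1-2R_m$ and $(-1)^m=-1$, again giving $\sigma_{2m+1}=-(1-2R_m)$. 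Either sub-case matches the second summand.

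The hard part is not this reduction but the preceding conjecture itself: the present statement is essentially a repackaging of $\sigma_n=(-1)^{\binom{n}{2}}(1-2R_n)$ in which the $(-1)^{\binom{n}{2}}$ factor is absorbed into a recursive description of $R_n$. To attack the underlying conjecture one would need an explicit Jacobi continued-fraction expansion for $1-xr(x)$, a tractable description of the resulting $\beta$-sequence, and then control of the sign of $\prod_{k=0}^n \beta_k^{n-k}$; alternatively one could try to mimic the treatment of $r_n$ in Bacher and Cigler, where the $2$-automatic structure is exploited through Hankel-determinant recurrences indexed by the binary expansion of $n$. Either route promises to be the genuine obstacle, whereas the parity bookkeeping above is straightforward once $\sigma_n=(-1)^{\binom{n}{2}}(1-2R_n)$ is in hand.
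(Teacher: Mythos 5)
The statement you are asked about is presented in the paper as a \emph{conjecture}, with no proof supplied; the paper merely records it as a refinement of the immediately preceding conjecture $\sigma_n=(-1)^{\binom{n}{2}}/(1-2R_n)$. So there is no proof in the paper to compare against. What you have written is not a proof of the statement either: it is a (correct) derivation of the displayed formula \emph{from} that preceding conjecture, which is itself unproven in the paper. Your arithmetic is sound throughout: since $R_n\in\{0,1\}$ the factor $1-2R_n$ is indeed self-reciprocal; $\binom{2m}{2}=m(2m-1)$ and $\binom{2m+1}{2}=m(2m+1)$ both have the parity of $m$; and the recursion $R_{2m}=R_m$, $R_{2m+1}=R_m$ for $m$ odd, $R_{2m+1}=1-R_m$ for $m$ even agrees exactly with the paper's stated recursion for \seqnum{A268411}. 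Your two sub-cases for odd $n$ both collapse to $\sigma_{2m+1}=-(1-2R_m)$, matching the second summand, and the even case matches the first summand. So you have established that the two conjectures are equivalent (given the elementary facts about $R_n$), which is genuinely more than the paper states explicitly.

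The gap is the one you yourself name: nothing here touches the actual content, namely why the Hankel transform of the expansion of $1-xr(x)$ should have its sign sequence governed by the parity of the number of runs of $1$'s in the binary expansion of $n$. Conditioning on an unproven conjecture of the same paper does not discharge the statement. Your closing paragraph correctly identifies where the real work lies (controlling the $\beta$-sequence of the Jacobi continued fraction for $1-xr(x)$, or adapting the $2$-automatic Hankel-determinant machinery of Bacher, Cigler, and Fokkink--Kraaikamp--Shallit), but that work is not carried out, and the paper does not carry it out either. If your submission is meant as a proof of the conjecture, it is incomplete; if it is meant as a reduction clarifying the logical relationship between the paper's two conjectured formulas for $\sigma_n$, it is correct and worth recording as such.
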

We also conjecture that $\sigma_{2n}$ and $-\sigma_{2n+1}$ share the same Hankel transform, that begins 
$$1, -2, 0, 8, -48, -32, 320, -768, -4352, -2560, -3072,\ldots.$$
We let $\sigma(x)=\sum_{n=0}^{\infty} \sigma_n x^n$ be the generating function of the sequence $\sigma_n$. It is of interest to investigate the array $(\sigma(x), x \sigma(x))$. This array begins
$$\left(
\begin{array}{ccccccccc}
 1 & 0 & 0 & 0 & 0 & 0 & 0 & 0 & 0 \\
 -1 & 1 & 0 & 0 & 0 & 0 & 0 & 0 & 0 \\
 1 & -2 & 1 & 0 & 0 & 0 & 0 & 0 & 0 \\
 1 & 3 & -3 & 1 & 0 & 0 & 0 & 0 & 0 \\
 -1 & 0 & 6 & -4 & 1 & 0 & 0 & 0 & 0 \\
 1 & -3 & -4 & 10 & -5 & 1 & 0 & 0 & 0 \\
 1 & 6 & -3 & -12 & 15 & -6 & 1 & 0 & 0 \\
 1 & -1 & 15 & 3 & -25 & 21 & -7 & 1 & 0 \\
 -1 & 0 & -14 & 24 & 20 & -44 & 28 & -8 & 1 \\
\end{array}
\right).$$
Modulo $2$, we retrieve the Sierpinski triangle
$$\left(
\begin{array}{ccccccccccccccccccccc}
 1 & \text{} & \text{} & \text{} & \text{} & \text{} & \text{} & \text{} & \text{} & \text{} & \text{} & \text{} & \text{} & \text{} & \text{} & \text{} & \text{} & \text{} &
   \text{} & \text{} & \text{} \\
 1 & 1 & \text{} & \text{} & \text{} & \text{} & \text{} & \text{} & \text{} & \text{} & \text{} & \text{} & \text{} & \text{} & \text{} & \text{} & \text{} & \text{} & \text{}
   & \text{} & \text{} \\
 1 & \text{} & 1 & \text{} & \text{} & \text{} & \text{} & \text{} & \text{} & \text{} & \text{} & \text{} & \text{} & \text{} & \text{} & \text{} & \text{} & \text{} & \text{}
   & \text{} & \text{} \\
 1 & 1 & 1 & 1 & \text{} & \text{} & \text{} & \text{} & \text{} & \text{} & \text{} & \text{} & \text{} & \text{} & \text{} & \text{} & \text{} & \text{} & \text{} & \text{} &
   \text{} \\
 1 & \text{} & \text{} & \text{} & 1 & \text{} & \text{} & \text{} & \text{} & \text{} & \text{} & \text{} & \text{} & \text{} & \text{} & \text{} & \text{} & \text{} & \text{}
   & \text{} & \text{} \\
 1 & 1 & \text{} & \text{} & 1 & 1 & \text{} & \text{} & \text{} & \text{} & \text{} & \text{} & \text{} & \text{} & \text{} & \text{} & \text{} & \text{} & \text{} & \text{} &
   \text{} \\
 1 & \text{} & 1 & \text{} & 1 & \text{} & 1 & \text{} & \text{} & \text{} & \text{} & \text{} & \text{} & \text{} & \text{} & \text{} & \text{} & \text{} & \text{} & \text{} &
   \text{} \\
 1 & 1 & 1 & 1 & 1 & 1 & 1 & 1 & \text{} & \text{} & \text{} & \text{} & \text{} & \text{} & \text{} & \text{} & \text{} & \text{} & \text{} & \text{} & \text{} \\
 1 & \text{} & \text{} & \text{} & \text{} & \text{} & \text{} & \text{} & 1 & \text{} & \text{} & \text{} & \text{} & \text{} & \text{} & \text{} & \text{} & \text{} & \text{}
   & \text{} & \text{} \\
 1 & 1 & \text{} & \text{} & \text{} & \text{} & \text{} & \text{} & 1 & 1 & \text{} & \text{} & \text{} & \text{} & \text{} & \text{} & \text{} & \text{} & \text{} & \text{} &
   \text{} \\
 1 & \text{} & 1 & \text{} & \text{} & \text{} & \text{} & \text{} & 1 & \text{} & 1 & \text{} & \text{} & \text{} & \text{} & \text{} & \text{} & \text{} & \text{} & \text{} &
   \text{} \\
 1 & 1 & 1 & 1 & \text{} & \text{} & \text{} & \text{} & 1 & 1 & 1 & 1 & \text{} & \text{} & \text{} & \text{} & \text{} & \text{} & \text{} & \text{} & \text{} \\
 1 & \text{} & \text{} & \text{} & 1 & \text{} & \text{} & \text{} & 1 & \text{} & \text{} & \text{} & 1 & \text{} & \text{} & \text{} & \text{} & \text{} & \text{} & \text{} &
   \text{} \\
 1 & 1 & \text{} & \text{} & 1 & 1 & \text{} & \text{} & 1 & 1 & \text{} & \text{} & 1 & 1 & \text{} & \text{} & \text{} & \text{} & \text{} & \text{} & \text{} \\
 1 & \text{} & 1 & \text{} & 1 & \text{} & 1 & \text{} & 1 & \text{} & 1 & \text{} & 1 & \text{} & 1 & \text{} & \text{} & \text{} & \text{} & \text{} & \text{} \\
 1 & 1 & 1 & 1 & 1 & 1 & 1 & 1 & 1 & 1 & 1 & 1 & 1 & 1 & 1 & 1 & \text{} & \text{} & \text{} & \text{} & \text{} \\
 1 & \text{} & \text{} & \text{} & \text{} & \text{} & \text{} & \text{} & \text{} & \text{} & \text{} & \text{} & \text{} & \text{} & \text{} & \text{} & 1 & \text{} & \text{}
   & \text{} & \text{} \\
 1 & 1 & \text{} & \text{} & \text{} & \text{} & \text{} & \text{} & \text{} & \text{} & \text{} & \text{} & \text{} & \text{} & \text{} & \text{} & 1 & 1 & \text{} & \text{} &
   \text{} \\
 1 & \text{} & 1 & \text{} & \text{} & \text{} & \text{} & \text{} & \text{} & \text{} & \text{} & \text{} & \text{} & \text{} & \text{} & \text{} & 1 & \text{} & 1 & \text{} &
   \text{} \\
 1 & 1 & 1 & 1 & \text{} & \text{} & \text{} & \text{} & \text{} & \text{} & \text{} & \text{} & \text{} & \text{} & \text{} & \text{} & 1 & 1 & 1 & 1 & \text{} \\
 1 & \text{} & \text{} & \text{} & 1 & \text{} & \text{} & \text{} & \text{} & \text{} & \text{} & \text{} & \text{} & \text{} & \text{} & \text{} & 1 & \text{} & \text{} &
   \text{} & 1 \\
\end{array}
\right)$$ with row sums given by Gould's sequence. Moreover, vertical and horizontal half triangles, taken modulo $2$, have row sums given by the Rueppel sequence and the augmented Rueppel sequence, respectively. In this respect, the array $(\sigma(x), x \sigma(x))$ is cognate with Pascal's triangle.

Similarly, the array $\left(1, \frac{x}{\sigma(x)}\right)$ when taken modulo $2$ has row sums given by Stern's diatomic sequence, and its vertical and horizontal halves, again taken modulo $2$, have row sums  respectively given by \seqnum{A007306} (denominators of Farey tree fractions) and Gould's sequence. Thus the array $\left(1, \frac{x}{\sigma(x)}\right)$ is cognate with the array $\left(\binom{k}{n-k}\right)$ which corresponds to $(1, x(1-x))$.

Taking the array $\left(1, \frac{x}{\sigma(x)}\right)^{-1}$ modulo $2$ gives an array that begins
$$\left(
\begin{array}{ccccccccccccccccccccc}
 1 & \text{} & \text{} & \text{} & \text{} & \text{} & \text{} & \text{} & \text{} & \text{} & \text{} & \text{} & \text{} & \text{} & \text{} & \text{} & \text{} & \text{} &
   \text{} & \text{} & \text{} \\
 \text{} & 1 & \text{} & \text{} & \text{} & \text{} & \text{} & \text{} & \text{} & \text{} & \text{} & \text{} & \text{} & \text{} & \text{} & \text{} & \text{} & \text{} &
   \text{} & \text{} & \text{} \\
 \text{} & 1 & 1 & \text{} & \text{} & \text{} & \text{} & \text{} & \text{} & \text{} & \text{} & \text{} & \text{} & \text{} & \text{} & \text{} & \text{} & \text{} & \text{}
   & \text{} & \text{} \\
 \text{} & \text{} & \text{} & 1 & \text{} & \text{} & \text{} & \text{} & \text{} & \text{} & \text{} & \text{} & \text{} & \text{} & \text{} & \text{} & \text{} & \text{} &
   \text{} & \text{} & \text{} \\
 \text{} & 1 & 1 & 1 & 1 & \text{} & \text{} & \text{} & \text{} & \text{} & \text{} & \text{} & \text{} & \text{} & \text{} & \text{} & \text{} & \text{} & \text{} & \text{} &
   \text{} \\
 \text{} & \text{} & \text{} & 1 & \text{} & 1 & \text{} & \text{} & \text{} & \text{} & \text{} & \text{} & \text{} & \text{} & \text{} & \text{} & \text{} & \text{} & \text{}
   & \text{} & \text{} \\
 \text{} & \text{} & \text{} & \text{} & \text{} & 1 & 1 & \text{} & \text{} & \text{} & \text{} & \text{} & \text{} & \text{} & \text{} & \text{} & \text{} & \text{} & \text{}
   & \text{} & \text{} \\
 \text{} & \text{} & \text{} & \text{} & \text{} & \text{} & \text{} & 1 & \text{} & \text{} & \text{} & \text{} & \text{} & \text{} & \text{} & \text{} & \text{} & \text{} &
   \text{} & \text{} & \text{} \\
 \text{} & 1 & 1 & 1 & 1 & 1 & 1 & 1 & 1 & \text{} & \text{} & \text{} & \text{} & \text{} & \text{} & \text{} & \text{} & \text{} & \text{} & \text{} & \text{} \\
 \text{} & \text{} & \text{} & 1 & \text{} & 1 & \text{} & 1 & \text{} & 1 & \text{} & \text{} & \text{} & \text{} & \text{} & \text{} & \text{} & \text{} & \text{} & \text{} &
   \text{} \\
 \text{} & \text{} & \text{} & \text{} & \text{} & 1 & 1 & \text{} & \text{} & 1 & 1 & \text{} & \text{} & \text{} & \text{} & \text{} & \text{} & \text{} & \text{} & \text{} &
   \text{} \\
 \text{} & \text{} & \text{} & \text{} & \text{} & \text{} & \text{} & 1 & \text{} & \text{} & \text{} & 1 & \text{} & \text{} & \text{} & \text{} & \text{} & \text{} & \text{}
   & \text{} & \text{} \\
 \text{} & \text{} & \text{} & \text{} & \text{} & \text{} & \text{} & \text{} & \text{} & 1 & 1 & 1 & 1 & \text{} & \text{} & \text{} & \text{} & \text{} & \text{} & \text{} &
   \text{} \\
 \text{} & \text{} & \text{} & \text{} & \text{} & \text{} & \text{} & \text{} & \text{} & \text{} & \text{} & 1 & \text{} & 1 & \text{} & \text{} & \text{} & \text{} & \text{}
   & \text{} & \text{} \\
 \text{} & \text{} & \text{} & \text{} & \text{} & \text{} & \text{} & \text{} & \text{} & \text{} & \text{} & \text{} & \text{} & 1 & 1 & \text{} & \text{} & \text{} & \text{}
   & \text{} & \text{} \\
 \text{} & \text{} & \text{} & \text{} & \text{} & \text{} & \text{} & \text{} & \text{} & \text{} & \text{} & \text{} & \text{} & \text{} & \text{} & 1 & \text{} & \text{} &
   \text{} & \text{} & \text{} \\
 \text{} & 1 & 1 & 1 & 1 & 1 & 1 & 1 & 1 & 1 & 1 & 1 & 1 & 1 & 1 & 1 & 1 & \text{} & \text{} & \text{} & \text{} \\
 \text{} & \text{} & \text{} & 1 & \text{} & 1 & \text{} & 1 & \text{} & 1 & \text{} & 1 & \text{} & 1 & \text{} & 1 & \text{} & 1 & \text{} & \text{} & \text{} \\
 \text{} & \text{} & \text{} & \text{} & \text{} & 1 & 1 & \text{} & \text{} & 1 & 1 & \text{} & \text{} & 1 & 1 & \text{} & \text{} & 1 & 1 & \text{} & \text{} \\
 \text{} & \text{} & \text{} & \text{} & \text{} & \text{} & \text{} & 1 & \text{} & \text{} & \text{} & 1 & \text{} & \text{} & \text{} & 1 & \text{} & \text{} & \text{} & 1 &
   \text{} \\
 \text{} & \text{} & \text{} & \text{} & \text{} & \text{} & \text{} & \text{} & \text{} & 1 & 1 & 1 & 1 & \text{} & \text{} & \text{} & \text{} & 1 & 1 & 1 & 1 \\
\end{array}
\right).$$
The row sums of this matrix begin
$$1, 1, 2, 1, 4, 2, 2, 1, 8, 4, 4, 2, 4, 2, 2, 1, 16, 8, 8, 4, 8,\ldots,$$ which coincides with
\seqnum{A080100}, or $2^{\text{number of $0$'s in the binary representation of $n$}}$. Modulo $2$ this gives the Rueppel sequence.

\end{document}